\numberwithin{equation}{section}
\newtheorem{theorem}{Theorem}[section]
\newtheorem{corollary}{Corollary}[theorem]
\newtheorem{lemma}[theorem]{Lemma}
\newtheorem{proposition}[theorem]{Proposition}
\newtheorem*{remark}{Remark}
\newtheorem{definition}[theorem]{Definition}
\newcommand{\upd}{\mathrm{d}}  
\def\a{\alpha }     \def\b{\beta  }         \def\g{\gamma }
     \def\d{\delta}          \def\D{\Delta }
    \def\ve{\varepsilon}    
\def\h{\eta }               
          \def\l{\lambda }    
    \def\m{\mu}             \def\n{\nu}         
                \def\u{\upsilon }
       \def\w{\omega }         \def\W{\Omega }
\title{Asymptotic analysis of subwavelength halide perovskite resonators\thanks{The work of KA was supported by the project ETH-34 20-2 and the work of BD was supported by H2020 FETOpen project BOHEME under grant agreement No.~863179.}}
\author{Konstantinos Alexopoulos\thanks{Department of Mathematics, ETH Zurich, R\"amistrasse 101, CH-8092 Zurich, Switzerland, konstantinos.alexopoulos@sam.math.ethz.ch .} \and Bryn Davies\thanks{Department of Mathematics, Imperial College London, 180 Queen's Gate, London SW7 2AZ, United Kingdom, bryn.davies@imperial.ac.uk .}}
\date{}
\begin{document}

\maketitle

\begin{abstract}
    Halide perovskites are promising materials with many significant applications in photovoltaics and optoelectronics. In this paper, we use integral methods to quantify the resonant properties of halide perovskite nano-particles. We prove that, for arbitrarily small particles, the subwavelength resonant frequencies can be expressed in terms of the eigenvalues of the Newtonian potential associated with its shape. We also characterize the hybridized subwavelength resonant frequencies of a dimer of two halide perovskite particles. Finally, we examine the specific case of spherical resonators and demonstrate that our new results are consistent with previous works.
\end{abstract}

\tableofcontents

\section{Introduction}

Halide perovskites are set to be at the center of the next generation of electromagnetic devices \cite{JKM,MFTHBPZK,S}. They are composed of crystalline lattices which have octohedral shapes and contain atoms of heavier halides, such as chlorine, bromine and iodine \cite{AM}. Their excellent optical and electronic properties, combined with being cheap and easy to manufacture, have paved the way for a perovskite revolution. A particular benefit of halide perovskites is that their high absorption coefficient enables microscopic devices (measuring only a few hundred nanometres) to absorb the complete visible spectrum. Thus, we are able to design very small devices that are lightweight and compact while also being low cost and efficient. Research is ongoing to develop perovskites capable of fulfilling their theoretical capabilities for use in applications such as optical sensors \cite{GPLLZZSQKJ}, solar cells \cite{S} and light-emitting diodes \cite{WKYZZPLBHL}. 

The dielectric permittivity of a halide perovskite $\ve$ is given, in terms of the frequency $\w$ and wavenumber $k$ by \cite{MFTHBPZK}
$$
\ve(\w,k) = \ve_0 + \frac{\w_p^2}{\w^2_{exc} - \w^2 + \hslash \w_{exc} k^2 \m_{exc}^{-1} - i\g\w},
$$
where $\w_{exc}$ is the frequency of the excitonic transition, $\w_p$ is the strength of the dipole oscillator, $\g$ is the damping factor, $\m_{exc}$ is related to the non-local response and $\ve_0$ is the background dielectric constant. We refer to \cite{MFTHBPZK} for the values of these constants for different halide perovskites at room temperature. Meanwhile, the dispersion relation for the halide perovskites is observed \cite{HZDAMLLC} to take the simplified form
$$
n^2 = 1 + \frac{S_0 \l_0^2}{1 - \l_0^2 k^2},
$$
where $n$ is the refractive index, $k$ is the wavenumber and $S_0$ and $\l_0$ are positive constants that describe the average oscillator strength, see \cite{HZDAMLLC} for the values for some different halide perovsksites.

Dielectric nano-particles, and other electromagnetic metamaterials, have been studied using various techniques. In the case of extreme material parameters, such as small particle size or large material contrast, asymptotic methods can be used \cite{ADH,ADFMS,G}. Likewise,  homogenisation has been used to characterise materials with periodic micro-structures \cite{BBF, GZ}. Multiple scattering formulations are popular, particularly when a convenient choice of geometry (e.g. cylindrical or spherical resonators) facilitates explicit formulas \cite{TB}. In this paper, we will use integral methods to study a general class of geometries and will extend the previous theory, e.g. \cite{ADFMS}, to the case of dispersive materials.

For simplicity, we consider the Helmholtz equation as a model of the propagation of time-harmonic waves, and for the permittivity relation, we consider the form
\begin{align}\label{prmtvt}
    \ve(\w,k) = \ve_0 + \frac{\a}{\b - \w^2 + \h k^2 - i \g \w},
\end{align}
where $\a,\b,\g,\h$ are positive constants. We will use an approach based on representing the scattered solution using a Lippmann-Schwinger integral formulation and then using asymptotic methods to characterise the resonant frequencies in terms of the eigenvalues of the resulting integral operator (which turns out to be the Newtonian potential). This approach can handle a very general class of resonator shapes and can be adapted to study solutions to the Maxwell's equations \cite{ALZ}. A similar method was used in \cite{ADFMS} for a simpler, non-dispersive setting. This paper shows that the asymptotic theory developed in \cite{ADFMS} and elsewhere can be developed to model real-world settings and can be used to influence high-impact design problems.

In section~\ref{sec:single} of this paper we will introduce the problem setting and retrieve its integral formulation. Then, we will study the one particle case for three and two dimensions, using integral techniques to formulate the subwavelength resonant problem and asymptotic approximations to study the resonant frequencies. In section~\ref{sec:two}, we will use these methods to describe the hybridization of two halide perovskite resonators. Again, we treat the three- and two-dimensional cases separately. Passing from the integral to a matrix formulation of the problem, we obtain the hybridized subwavelength resonant frequencies. Finally, we examine the case of spherical resonators, making use of the fact that eigenvalues and eigenfunctions of the Newtonian potential can be computed explicitly in this case (see also \cite{KS}). We show that our findings are qualitatively consistent with the ones of \cite{ADFMS}. Hence, we show that the asymptotic techniques used in \cite{ADFMS} can be generalized to less straightforward and more impactful physical settings.

\section{Single Resonators} \label{sec:single}

\subsection{Problem Setting}

Consider a single resonator occupying a bounded domain $\Omega \subset \mathbb{R}^d$, for $d\in \{2,3\}$. We assume that the particle is non-magnetic, so that the magnetic permeability $\m_0$ is constant on all of $\mathbb{R}^d$. We will consider a time-harmonic wave with frequency $\w\in\mathbb{C}$ (which we assume to have positive real part). The wavenumber in the background $\mathbb{R}^d \setminus \overline{\W}$ is given by $k_0:=\w\ve_0\m_0$ and we will use $k$ to denote the wavenumber within $\W$. We, then, consider the following Helmholtz model for light propagation:
\begin{align}\label{Helmholtz problem}
    \begin{cases}
    \D u + \w^2 \ve(\w,k)\m_0 u = 0 \ \ \ &\text{ in } \W, \\
    \D u + k_0^2 u = 0 &\text{ in } \mathbb{R}^d\setminus\overline{\W}, \\
    u|_+ - u|_-=0 &\text{ on } \partial\W, \\
    \frac{\partial u}{\partial \n}|_+ - \frac{\partial u}{\partial \n}|_- = 0 &\text{ on } \partial\W, \\
    u(x) - u_{in}(x) &\text{ satisfies the outgoing radiation condition as } |x|\to \infty,\\
    \end{cases}
\end{align}
where $u_{in}$ is the incident wave, assumed to satisfy
\begin{align*}
    (\D + k_0^2)u_{in}=0 \ \ \text{ in } \mathbb{R}^d,
\end{align*}
and the appropriate outgoing radiation condition is the Sommerfeld radiation condition, which requires that
\begin{equation}\label{Sommerfeld}
    \lim_{|x|\to\infty} |x|^{\frac{d-1}{2}} \left( \frac{\partial}{\partial |x|} - i k_0 \right)(u(x) - u_{in}(x))=0.
\end{equation}
In particular, we are interested in the case of small resonators. Thus, we will assume that there exists some fixed domain $D$ such that $\W$ is given by
\begin{equation}
    \W = \d D + z,
\end{equation}
for some position $z\in\mathbb{R}^d$ and characteristic size $0<\d\ll1$. Then, making a change of variables, the Helmholtz problem (\ref{Helmholtz problem}) becomes
\begin{align}\label{2.4}
    \begin{cases}
    \D u + \d^2 \w^2 \ve(\w,k)\m_0 u = 0 \ \ \ &\text{ in } D, \\
    \D u + \d^2 k_0^2 u = 0 &\text{ in } \mathbb{R}^d\setminus\overline{D}, \\
    \end{cases}
\end{align}
along with the same transmission conditions on $\partial D$ and far-field behaviour. The behaviour of resonators which is of interest is when $\d \ll k_0^{-1}$, meaning the system can be described as being \emph{subwavelength}. We will study this by performing asymptotics in the regime that the frequency $\w$ is fixed while the size $\d\to0$.\\
We will characterise solutions to (\ref{Helmholtz problem}) in terms of the system's resonant frequencies. For a given wavenumber $k$, we define $\w=\w(k)$ to be a \emph{resonant frequency} if it is such that there exists a non-trivial solution $u$ to (\ref{Helmholtz problem}) in the case that $u_{in}=0$.

\subsection{Integral Formulation}

Let $G(x,k)$ be the outgoing Helmholtz Green's function in $\mathbb{R}^d$, defined as the unique solution to
\begin{align*}
    (\D + k^2) G(x,k) = \d_0(x) \ \ \text{ in } \mathbb{R}^d,
\end{align*}
along with the outgoing radiation condition (\ref{Sommerfeld}). It is well known that $G$ is given by
\begin{align}
    G(x,k)=
    \begin{cases}
    -\frac{i}{4}H^{(1)}_0(k|x|), \ \ \ &d=2,\\
    -\frac{e^{ik|x|}}{4\pi|x|}, &d=3,\\
    \end{cases}
\end{align}
where $H_0^{(1)}$ is the Hankel function of first kind and order zero.

\begin{theorem}[Lippmann-Schwinger Integral Representation Formula] \label{Integral representation}
The solution to the Helmholtz problem (\ref{Helmholtz problem}) is given by
\begin{align}\label{Lipmann-Schwinger}
    u(x)-u_{in}(x) = -\d^2 \w^2 \xi(\w,k) \int_D G(x-y,\d k_0) u(y) \upd y, \ \ x\in\mathbb{R}^d,
\end{align}
where the function $\xi:\mathbb{C}\to\mathbb{C}$ describes the permittivity contrast between $D$ and the background and is given by
$$
\xi(\w,k) = \m_0(\ve(\w,k) - \ve_0).
$$
\end{theorem}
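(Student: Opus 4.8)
The plan is to recast the transmission problem as a single inhomogeneous Helmholtz equation posed on all of $\mathbb{R}^d$, in which the resonator acts as a compactly supported source, and then to invert the background Helmholtz operator using its outgoing Green's function. I would work throughout in the rescaled variables of (\ref{2.4}), so that the domain is the fixed set $D$ and the background wavenumber is $\d k_0$; in these variables the incident field satisfies $(\D + \d^2 k_0^2)u_{in}=0$ in $\mathbb{R}^d$.

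The first, and algebraically crucial, step is to produce the source term. Inside $D$ the field satisfies $\D u = -\d^2 \w^2 \ve(\w,k)\m_0 u$, so adding and subtracting $\d^2 k_0^2 u$ and using $k_0^2 = \w^2 \ve_0 \m_0$ gives
\begin{align*}
    (\D + \d^2 k_0^2) u = \d^2\bigl(k_0^2 - \w^2 \ve(\w,k)\m_0\bigr) u = -\d^2 \w^2 \m_0\bigl(\ve(\w,k) - \ve_0\bigr) u = -\d^2 \w^2 \xi(\w,k)\, u \quad \text{in } D,
\end{align*}
while $(\D + \d^2 k_0^2) u = 0$ in $\mathbb{R}^d \setminus \overline{D}$; the definition of $\xi$ is precisely what makes this cancellation exact. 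The point requiring care is that these two pointwise identities combine into the single distributional equation $(\D + \d^2 k_0^2) u = -\d^2 \w^2 \xi(\w,k)\, u\, \mathbbm{1}_D$ on $\mathbb{R}^d$, with no spurious single- or double-layer contributions supported on $\partial D$. This is exactly where the transmission conditions enter: since $u|_+ = u|_-$ and $\partial_\n u|_+ = \partial_\n u|_-$ on $\partial D$, the function $u$ lies in $H^1_{\mathrm{loc}}(\mathbb{R}^d)$ with matching Cauchy data, and the jump terms that would otherwise appear when computing $\D u$ across the interface vanish.

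Subtracting the incident field, $w := u - u_{in}$ then solves $(\D + \d^2 k_0^2) w = -\d^2 \w^2 \xi(\w,k)\, u\, \mathbbm{1}_D$ on $\mathbb{R}^d$ and satisfies the Sommerfeld radiation condition (\ref{Sommerfeld}). Since $G(\,\cdot\,, \d k_0)$ is, by definition, the outgoing fundamental solution of $\D + \d^2 k_0^2$, the volume potential $x \mapsto -\d^2 \w^2 \xi(\w,k) \int_D G(x - y, \d k_0) u(y)\, \upd y$ solves the same inhomogeneous equation and obeys the same radiation condition. Invoking uniqueness of outgoing solutions to the Helmholtz equation with compactly supported right-hand side then forces $w$ to coincide with this potential, which is the claimed representation (\ref{Lipmann-Schwinger}).

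The main obstacle is the rigorous justification of these two ``soft'' steps rather than the algebra. First, one must verify that the distributional reformulation carries no boundary layer, i.e. that the regularity of $u$ guaranteed by the transmission conditions really does kill the surface terms; the cleanest way is to test $(\D + \d^2 k_0^2)u$ against a smooth compactly supported function and integrate by parts over $D$ and its complement separately, checking that the interface integrals cancel. Second, the uniqueness step relies on the standard Rellich-type uniqueness theorem for the exterior Helmholtz problem, which must be invoked with $\w$ (hence $\d k_0$) having positive real part, so that the radiation condition selects a unique outgoing solution. An equivalent, more constructive route avoids appealing to uniqueness abstractly: apply Green's second identity to $u$ and $G(x-\,\cdot\,,\d k_0)$ in $D$ and in a large ball minus $\overline{D}$, let the ball radius tend to infinity using (\ref{Sommerfeld}) to discard the outer contribution, and observe that the interface integrals again cancel by the transmission conditions, leaving exactly (\ref{Lipmann-Schwinger}).
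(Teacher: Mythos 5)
Your argument is correct, and its algebraic core --- rewriting the equation in $D$ as $(\Delta + \delta^2 k_0^2)u = -\delta^2\omega^2\xi(\omega,k)u$ so that the resonator becomes a compactly supported source with density $-\delta^2\omega^2\xi(\omega,k)\,u\,\chi_D$ --- is exactly the paper's first step. Where you diverge is in how you invert the background operator: your primary route promotes the two pointwise equations to a single distributional equation on $\mathbb{R}^d$ (using the transmission conditions to rule out single- and double-layer contributions on $\partial D$) and then identifies $u-u_{in}$ with the volume potential by uniqueness of outgoing solutions. The paper instead proceeds constructively: it writes the pointwise divergence identity for $G(x-\cdot,\delta k_0)$ and $u-u_{in}$, integrates over $S_R\setminus\partial D$, and lets $R\to\infty$, with the radiation condition killing the contribution at infinity and the transmission conditions cancelling the two interface integrals --- this is precisely the ``equivalent, more constructive route'' you sketch in your final sentences, so your alternative coincides with the paper's actual proof. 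The uniqueness-based version is cleaner but imports a nontrivial exterior-uniqueness theorem, and since resonant frequencies are sought in $\mathbb{C}$ one should really establish the formula for real (or suitably restricted) $\omega$ and extend by analytic continuation, a point worth making explicit; the Green's-identity version is self-contained and avoids this. One cosmetic remark: you use $k_0^2=\omega^2\varepsilon_0\mu_0$, which is the relation the cancellation actually requires, whereas the paper's text defines $k_0:=\omega\varepsilon_0\mu_0$ --- an apparent typo that your algebra silently corrects.
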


\begin{proof}
We see from (\ref{2.4}) that $\D u + \d^2\w^2\ve(\w,k)\m_0u=0$ in $D$, so it holds that
\begin{align*}
    \D u +\d^2 k_0^2 u = -\d^2 \w^2 \xi(\w,k)u,
\end{align*}
where $\xi(\w,k) = \m_0(\ve(\w,k) - \ve_0)$. Therefore, the Helmholtz problem $(\ref{2.4})$ becomes
\begin{align*}
    (\D + \d^2 k_0^2)(u(y) - u_{in}(y)) = -\d^2 \w^2 \xi(\w,k)u(y) \chi_D(y), \ \ y\in\mathbb{R}^d \setminus\partial D,
\end{align*}
with $\chi_D$ being the indicator function of the set $D$. Then, we know that for $x\in\mathbb{R}^d$ and $y\in\mathbb{R}^d\setminus\partial D$, the following identity holds:
\begin{align*}
    \nabla_y \cdot \Big[  G(x-y,\d k_0) \nabla_y \Big( u(y)&-u_{in}(y) \Big) - \Big( u(y)-u_{in}(y) \Big) \nabla_y G(x-y,\d k_0) \Big] \\
    &= -\d^2 \w^2 \xi(\w,k)u(y)G(x-y,\d k_0) \chi_D(y) - \Big( u(y)-u_{in}(y) \Big)\d_0(x-y).
\end{align*}
Let $S_R$ be a large sphere with radius $R$ large enough so that $D \subset S_R$. Integrating the above identity over $y \in S_r \setminus \partial D$ and letting $R\to+\infty$, we can use the radiation condition (\ref{Sommerfeld}) in the far field and the transmission conditions on $\partial D$ to obtain the desired integral representation formula.
\end{proof}

We are interested in understanding how the formula from Theorem~\ref{Integral representation} behaves in the case that $\d$ is small. In particular, we wish to understand the operator $K^{\d k_0}_D: L^2(D)\to L^2(D)$ given by
\begin{align}
K^{\d k_0}_D[u](x) = - \int_{\W} G(x-y,\d k_0) u(y) \upd y, \ \ \ \ x \in D.    
\end{align}
The Helmholtz Green’s function has helpful asymptotic expansions which facilitate this. However, the behaviour is quite different in two and three dimensions, so we must now consider these two settings separately. We will first work on the three-dimensional case, and then treat the two-dimensional setting as the asymptotic expansions are more complicated.

\subsection{Reformulation as a Subwavelength Resonance Problem}

In order to reveal the behaviour of the system of nano-particles, we will characterise the properties of the operator $K^{\d k_0}_D$. We observe that the Lippmann-Schwinger formulation (\ref{Lipmann-Schwinger}) of the problem is equivalent to
\begin{align*}
    (u-u_{in})(x) = \d^2 \w^2 \xi(\w,k) K^{\d k_0}_D[u](x).
\end{align*}
This is equivalent to
\begin{align*}
    (I - \d^2 \w^2 \xi(\w,k) K^{\d k_0}_D)[u](x) = u_{in}(x), 
\end{align*}
which gives
\begin{align*}
    u(x) = (I - \d^2 \w^2 \xi(\w,k) K^{\d k_0}_D)^{-1}[u_{in}](x), 
\end{align*}
for all $x\in D$, where $I$ denotes the identity operator. Then, the subwavelength resonance problem is to find $\w \in \mathbb{C}$ close to 0, such that the operator $(I - \d^2 \w^2 \xi(\w,k) K^{\d k_0}_D)^{-1}$ is singular, or equivalently, such that there exists $u \in L^2(D)$, $u \ne 0$ with
\begin{align}\label{pb}
    u(x) - \d^2 \w^2 \xi(\w,k) \int_D G(x-y,\d k_0) u(y) \upd y = 0, \ \ \ \text{ for } x \in D.
\end{align}

\subsection{Three Dimensions}

We consider the three-dimensional case, $d=3$. Let us define the Newtonian potential on $D$ to be $K^{(0)}_D: L^2(D)\to L^2(D)$ such that
\begin{equation}
    K^{(0)}_D[u](x) := - \int_D u(y) G(x-y,0) \upd y = - \frac{1}{4\pi} \int_D \frac{1}{|x-y|}u(y)\upd y.
\end{equation}
Similarly, we define the operators $K^{(n)}_D: L^2(D) \to L^2(D)$, for $n=1,2,\dots,$ as
\begin{equation}\label{Kn}
    K^{(n)}_D[u](x) = - \frac{i}{4\pi} \int_D \frac{(i|x-y|)^{n-1}}{n!}u(y)\upd y.
\end{equation}
Then, in order to capture the behaviour of $K^{\d k_0}_D$ for small $\d$, we can use an asymptotic expansion in terms of small $\delta$.
\begin{lemma} \label{Taylor d=3}
Suppose that $d=3$. The operator $K^{\d k_0}_D$ can be rewritten as
\begin{align*}
    K^{\d k_0}_D = \sum_{n=0}^{\infty} (\d k_0)^n K^{(n)}_D,
\end{align*}
where the series converges in the $L^2(D) \to L^2(D)$ operator norm if $\d k_0$ is small enough.
\end{lemma}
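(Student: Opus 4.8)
The plan is to substitute the explicit three-dimensional Green's function into the kernel of $K^{\d k_0}_D$ and Taylor expand the exponential factor, matching the resulting terms with the operators $K^{(n)}_D$. Since $G(x-y,\d k_0)=-\tfrac{1}{4\pi|x-y|}e^{i\d k_0|x-y|}$, the kernel $-G(x-y,\d k_0)$ of $K^{\d k_0}_D$ can be written as
\begin{align*}
    -G(x-y,\d k_0) = \frac{1}{4\pi|x-y|}\sum_{n=0}^{\infty} \frac{\big(i\d k_0|x-y|\big)^n}{n!} = \sum_{n=0}^{\infty}(\d k_0)^n\,\frac{i^n|x-y|^{n-1}}{4\pi\, n!}.
\end{align*}
Pulling the factor $(\d k_0)^n$ out of the $n$th summand leaves precisely the kernel appearing in the Newtonian potential $K^{(0)}_D$ when $n=0$ and in the definition (\ref{Kn}) of $K^{(n)}_D$ when $n\geq 1$, so that term by term the series reproduces $\sum_n (\d k_0)^n K^{(n)}_D$ (I would check the sign bookkeeping in these kernels carefully before summing). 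This identifies the formal series, and all that remains is to justify convergence in the operator norm.

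For the convergence I would estimate the tail of the series directly, rather than each term in isolation, since this keeps the weak singularity under control. Writing $L:=\mathrm{diam}(D)$ and $S_N:=\sum_{n=0}^N (\d k_0)^n K^{(n)}_D$, the kernel of $K^{\d k_0}_D-S_N$ is exactly $\tfrac{1}{4\pi|x-y|}$ times the Taylor remainder of $e^{i\d k_0|x-y|}$, for which the standard bound
\begin{align*}
    \left| e^{i\d k_0|x-y|} - \sum_{n=0}^N \frac{\big(i\d k_0|x-y|\big)^n}{n!} \right| \leq \frac{(\d k_0 L)^{N+1}}{(N+1)!}\, e^{\d k_0 L}
\end{align*}
holds uniformly for $x,y\in D$. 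Hence the remainder kernel is dominated in modulus by a constant (small in $N$) multiple of the purely Newtonian kernel $\tfrac{1}{4\pi|x-y|}$.

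To turn this kernel bound into an operator-norm bound I would appeal to Schur's test, equivalently to the classical fact that a weakly singular kernel defines a bounded operator on $L^2(D)$. Because $\tfrac{1}{|x-y|}$ is integrable over the bounded set $D\subset\mathbb{R}^3$, the quantity $C_D:=\sup_{x\in D}\int_D \tfrac{1}{4\pi|x-y|}\,\upd y$ is finite, and therefore
\begin{align*}
    \big\| K^{\d k_0}_D-S_N \big\|_{L^2(D)\to L^2(D)} \leq C_D\,\frac{(\d k_0 L)^{N+1}}{(N+1)!}\, e^{\d k_0 L} \xrightarrow[N\to\infty]{} 0.
\end{align*}
The factorial in the denominator forces this to zero (in fact for every value of $\d k_0$, so the smallness hypothesis is only a convenient sufficient condition), which proves that the series converges to $K^{\d k_0}_D$ in the operator norm.

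The main obstacle is the weak singularity of the Newtonian kernel at $x=y$: the crude estimate of bounding $\|K^{(n)}_D\|$ by the supremum of its kernel breaks down at $n=0$, where the kernel is unbounded. The remedy used above is to never separate the singular factor $\tfrac{1}{|x-y|}$ from the bounded factors $|x-y|^{n}$ produced by the expansion, controlling the singularity once through its three-dimensional integrability while the bounded factors supply the convergence-forcing $1/n!$. Everything else (measurability of the kernels, and completeness of the space of bounded operators guaranteeing that the absolutely convergent series has a limit) is routine.
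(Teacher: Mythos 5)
Your identification of the series is essentially the paper's own argument: the paper Taylor-expands $G(x-y,k)$ in the wavenumber about $k=0$, which is precisely your expansion of the exponential factor $e^{i\d k_0|x-y|}$, followed by matching coefficients with the kernels of $K^{(n)}_D$. Where you genuinely add something is the convergence claim, which the paper merely asserts: you bound the tail kernel by the Taylor remainder of the exponential times the Newtonian kernel and pass to an operator-norm bound via Schur's test (the kernel is symmetric, so the row- and column-sum conditions coincide, and $C_D=\sup_{x\in D}\int_D \tfrac{1}{4\pi|x-y|}\,\upd y<\infty$ because $D\subset\mathbb{R}^3$ is bounded). This is correct and in fact stronger than the lemma, since the factorial gives convergence for every $\d k_0$, not only small values; for complex $k_0$ you should write $e^{|\d k_0|L}$ in the remainder bound, but nothing else changes. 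Your caution about signs is also warranted: with the paper's conventions $K^{\d k_0}_D$ has kernel $-G$, whereas the kernels $-\tfrac{i}{4\pi}\tfrac{(i|x-y|)^{n-1}}{n!}$ defining $K^{(n)}_D$ are the Taylor coefficients of $G$ itself (and the paper's two displayed expressions for $K^{(0)}_D$ already differ by a sign), so the stated identity holds only after fixing an overall sign; this is a bookkeeping slip in the paper rather than a gap in your argument.
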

\begin{proof}
This follows from an application of the Taylor expansion on the operator $K^{\d k_0}_D$. Indeed, using Taylor expansion on the second variable of $G$, we can see that
$$
G(x-y,\d k_0) = \sum_{n=0}^{\infty} (\d k_0)^n \frac{\partial^n G}{\partial k^n}(x-y,k)\Big|_{k=0}.
$$
Since we are working in three dimensions, 
$$
G(x,k) = - \frac{e^{ik|x|}}{4\pi|x|}
$$
and, thus,
$$
G(x-y,0) = -\frac{1}{4\pi|x-y|} \quad\text{and}\quad \frac{\partial^n G}{\partial k^n}(x-y,k) = -\frac{i}{4\pi} \frac{(i|x-y|)^{n-1}}{n!} \quad\text{for }n>0.
$$
Hence,
%\begin{align*}
%    G(x-y,\d k_0) &= \sum_{n=0}^{\infty} (\d k_0)^n \frac{\partial^n G}{\partial k^n}(x-y,k)\Big|_{k=0} = \sum_{n=0}^{\infty} (\d k_0)^n \left(-\frac{i}{4\pi}\right) \frac{(i|x-y|)^{n-1}}{n!} \Rightarrow \\
    %G(x-y,\d k_0) u(y) &= \sum_{n=0}^{\infty} (\d k_0)^n \left(-\frac{i}{4\pi}\right) \frac{(i|x-y|)^{n-1}}{n!} u(y) \Rightarrow \\
%    \int_D G(x-y,\d k_0) u(y) \upd y &= \sum_{n=0}^{\infty} (\d k_0)^n \int_D \left(-\frac{i}{4\pi}\right) \frac{(i|x-y|)^{n-1}}{n!} u(y) \upd y \Rightarrow \\
%    K^{\d k_0}_D[u](y) &= \sum_{n=0}^{\infty} (\d k_0)^n K^{(n)}_D[u](y)
%\end{align*}
%
\begin{align*}
    G(x-y,\d k_0) &= \sum_{n=0}^{\infty} (\d k_0)^n \frac{\partial^n G}{\partial k^n}(x-y,k)\Big|_{k=0} = \sum_{n=0}^{\infty} (\d k_0)^n \left(-\frac{i}{4\pi}\right) \frac{(i|x-y|)^{n-1}}{n!},
\end{align*}
and then, multiplying by $u$ and integrating over $D$, gives
\begin{align*}
    % \int_D G(x-y,\d k_0) u(y) \upd y &= \sum_{n=0}^{\infty} (\d k_0)^n \int_D \left(-\frac{i}{4\pi}\right) \frac{(i|x-y|)^{n-1}}{n!} u(y) \upd y \Rightarrow \\
    K^{\d k_0}_D[u](y) &= \sum_{n=0}^{\infty} (\d k_0)^n K^{(n)}_D[u](y),
\end{align*}
which is the desired result.
\end{proof}

\subsubsection{Eigenvalue Calculation}

In order to study the operator $K^{\d k_0}_D$, we need to find its eigenvalues. From Lemma~\ref{Taylor d=3}, if $k_0$ is fixed then we can write our operator as
\begin{align} \label{expansion}
K^{\d k_0}_D = K^{(0)}_D + \d k_0 K^{(1)}_D + O(\d^2) \ \ \ \text{ as } \d\to0.
\end{align}
Since we know that $K^{(0)}_D$ is self-adjoint, we know that it admits eigenvalues. Let us denote such an eigenvalue by $\l_0$, and by $u_0$ the associated eigenvector. Let us now consider the problem
\begin{equation}
K^{\d k_0}_D u_{\d} = \l_{\d} u_{\d},
\end{equation}
where $\l_{\d}$ denotes an eigenvalue of the operator $K^{\d k_0}_D$ and $u_{\d}$ denotes the associated eigenvector. Using this eigenvalue problem, we will find the resonant frequency $\w_s$ and the associated wavenumber $k_s$ of the halide perovskite nano-particle.

\begin{proposition}
Let $\l_{\d}$ denote an eigenvalue of the operator $K^{\d k_0}_D$ in dimension three. Then, if $\delta$ is small, it is approximately given by
\begin{align}\label{l_e approx d=3}
    \l_{\d} \approx \l_0 + \d k_0 \langle K^{(1)}_D u_0, u_0 \rangle.
\end{align}
\end{proposition}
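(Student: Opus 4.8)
The plan is to treat this as a first-order eigenvalue perturbation problem, using the convergent operator expansion (\ref{expansion}) supplied by Lemma~\ref{Taylor d=3} to guarantee that $K^{\d k_0}_D$ depends analytically on $\d$ near $\d=0$. Since $K^{(0)}_D$ is self-adjoint and compact, it has a discrete real spectrum; assuming the eigenvalue $\l_0$ is simple, Kato's analytic perturbation theory ensures that both the perturbed eigenvalue $\l_\d$ and a corresponding eigenvector $u_\d$ admit expansions in integer powers of $\d$ that are analytic in a neighbourhood of the origin. This is what licenses the formal calculation below.

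First I would posit the ans\"atze
\begin{align*}
    \l_\d = \l_0 + \d \l_1 + O(\d^2), \qquad u_\d = u_0 + \d u_1 + O(\d^2),
\end{align*}
normalising so that $\langle u_0, u_0 \rangle = 1$, and substitute them together with (\ref{expansion}) into the eigenvalue equation $K^{\d k_0}_D u_\d = \l_\d u_\d$. Matching the terms of order $\d^0$ simply recovers the unperturbed relation $K^{(0)}_D u_0 = \l_0 u_0$, while collecting the terms of order $\d^1$ yields
\begin{align*}
    K^{(0)}_D u_1 + k_0 K^{(1)}_D u_0 = \l_0 u_1 + \l_1 u_0.
\end{align*}

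To isolate $\l_1$, I would take the $L^2(D)$ inner product of this identity with $u_0$. The unknown first-order correction $u_1$ is eliminated by exploiting the self-adjointness of $K^{(0)}_D$: since $\langle K^{(0)}_D u_1, u_0 \rangle = \langle u_1, K^{(0)}_D u_0 \rangle = \l_0 \langle u_1, u_0 \rangle$ (using that $\l_0$ is real), the $u_1$ contributions cancel against the $\l_0 u_1$ term. What remains is $k_0 \langle K^{(1)}_D u_0, u_0 \rangle = \l_1 \langle u_0, u_0 \rangle = \l_1$, which gives exactly the claimed approximation $\l_\d \approx \l_0 + \d k_0 \langle K^{(1)}_D u_0, u_0 \rangle$.

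The main obstacle is not the algebra but the justification of the ans\"atze. One must confirm that $\l_0$ is a simple eigenvalue of the Newtonian potential, since otherwise the perturbation would first have to be diagonalised on the degenerate eigenspace, and the single formula above would split into several branches. A further point worth flagging is that $K^{(1)}_D$ is \emph{not} self-adjoint; indeed (\ref{Kn}) shows it to be the rank-one operator $u \mapsto -\frac{i}{4\pi}\int_D u(y)\,\upd y$, so the correction $\l_1$ is genuinely complex, reflecting the radiative (lossy) nature of the resonance. The derivation nonetheless goes through because only the self-adjointness of the \emph{unperturbed} operator $K^{(0)}_D$ is invoked in the cancellation step.
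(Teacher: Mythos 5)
Your proposal is correct and follows essentially the same route as the paper: both are first-order eigenvalue perturbation arguments in which the eigenvalue equation is paired with $u_0$ and the self-adjointness of $K^{(0)}_D$ is used to eliminate the unknown eigenvector correction. Your Rayleigh--Schr\"odinger presentation (expanding $\l_\d$ and $u_\d$ in powers of $\d$ and matching orders) is a slightly more systematic rewriting of the paper's computation, and your remarks on the simplicity of $\l_0$ and the non-self-adjointness of $K^{(1)}_D$ are accurate points that the paper leaves implicit.
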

\begin{proof}
We start by truncating the $O(\d^2)$ term in the expansion (\ref{expansion}). Then, we have that
\begin{align*}
    K^{\d k_0}_D u_{\d} = \l_{\d} u_{\d} \ &\Leftrightarrow \ (K^{(0)}_D + \d k_0 K^{(1)}_D)u_{\d} = \l_{\d} u_{\d} \ \Leftrightarrow \ \langle (K^{(0)}_D + \d k_0 K^{(1)}_D)u_{\d} , u_0 \rangle = \langle \l_{\d} u_{\d}, u_0 \rangle.
\end{align*}
Since $K^{(0)}_D$ is self-adjoint, we have that
    %&\Leftrightarrow \ \langle u_{\d} , K^{(0)}_D u_0 \rangle + \d k_0 \langle K^{(1)}_D u_{\d}, u_0 \rangle = \l_{\d} \langle u_{\d},u_0 \rangle \\
    %&\Leftrightarrow \ \langle u_{\d} , \l_0 u_0 \rangle + \d k_0 \langle K^{(1)}_D u_{\d}, u_0 \rangle = \l_{\d} \langle u_{\d},u_0 \rangle \\
\begin{align*}
     \ \l_0\langle u_{\d} , u_0 \rangle + \d k_0 \langle K^{(1)}_D u_{\d}, u_0 \rangle = \l_{\d} \langle u_{\d},u_0 \rangle.
\end{align*}
Finally, since $u_{\d}\approx u_0$ we find that
\begin{align*}
    \l_{\d} = \l_0 + \d k_0 \frac{\langle K^{(1)}_D u_{\d}, u_0 \rangle}{\langle u_{\d},u_0 \rangle}
    \approx \l_0 + \d k_0 \langle K^{(1)}_D u_0, u_0 \rangle,
\end{align*}
which is the desired result.
\end{proof}
The following corollary is a direct consequence of Proposition~\ref{l_e approx d=3}:
\begin{corollary}
Let $\l_{\d}$ denote an eigenvalue of the operator $K^{\d k_0}_D$ in three dimensions. Then, if $\delta$ is small, it is approximately given by
\begin{align} \label{l_e approx d=3 vol.2}
    \l_{\d} \approx \l_0 - \frac{i}{4\pi} \d k_0 \mathbb{B},
\end{align}
where $\mathbb{B}:=(\int_D u_0(y) \upd y)^2$ is a constant.
\end{corollary}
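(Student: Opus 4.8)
The plan is to treat the corollary as a direct evaluation of the single inner product $\langle K^{(1)}_D u_0, u_0 \rangle$ appearing in Proposition~\ref{l_e approx d=3}, since that proposition already reduces the eigenvalue expansion to
\begin{align*}
    \l_\d \approx \l_0 + \d k_0 \langle K^{(1)}_D u_0, u_0 \rangle.
\end{align*}
Everything therefore reduces to computing $\langle K^{(1)}_D u_0, u_0 \rangle$ in closed form and matching it with the claimed expression $-\frac{i}{4\pi}\mathbb{B}$.

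First I would specialise the definition (\ref{Kn}) of $K^{(n)}_D$ to $n=1$. Setting $n=1$ makes the factor $(i|x-y|)^{n-1}$ equal to $(i|x-y|)^0 = 1$, so the kernel collapses to a constant and
\begin{align*}
    K^{(1)}_D[u](x) = -\frac{i}{4\pi} \int_D u(y) \upd y.
\end{align*}
The crucial structural observation is that $K^{(1)}_D[u_0]$ is a \emph{constant} function of $x$, namely $-\frac{i}{4\pi}\int_D u_0(y)\upd y$; this is exactly what lets the inner product factor into a product of two separate integrals.

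Next I would substitute this into the $L^2(D)$ pairing. Pulling the constant $-\frac{i}{4\pi}\int_D u_0(y)\upd y$ out of the integration over $x$ leaves
\begin{align*}
    \langle K^{(1)}_D u_0, u_0 \rangle = -\frac{i}{4\pi} \left(\int_D u_0(y)\upd y\right)\left(\int_D u_0(x)\upd x\right) = -\frac{i}{4\pi}\,\mathbb{B},
\end{align*}
and inserting this into the formula of Proposition~\ref{l_e approx d=3} yields $\l_\d \approx \l_0 - \frac{i}{4\pi}\d k_0 \mathbb{B}$, as required.

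The one point that needs care — and the only genuine obstacle — is the conjugation hidden in the inner product: a naive computation would produce $\big(\int_D u_0\big)\,\overline{\big(\int_D u_0\big)} = |\int_D u_0|^2$ rather than the squared quantity $\mathbb{B}=(\int_D u_0)^2$ appearing in the statement. I would resolve this by invoking that $K^{(0)}_D$ is self-adjoint with a real-valued kernel, so the eigenfunction $u_0$ associated with the real eigenvalue $\l_0$ may be chosen real-valued; then $\overline{u_0}=u_0$ and the two expressions coincide. (Equivalently, one may read $\langle\cdot,\cdot\rangle$ as the bilinear pairing $\int_D fg$, under which the identity is immediate.) No convergence or smallness issues arise beyond those already controlled by Lemma~\ref{Taylor d=3} and the truncation performed in Proposition~\ref{l_e approx d=3}.
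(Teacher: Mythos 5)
Your proof is correct and follows essentially the same route as the paper: specialise the definition of $K^{(n)}_D$ to $n=1$, observe that the kernel is constant so the inner product factors into a product of two integrals over $D$, and substitute the resulting value $-\frac{i}{4\pi}\mathbb{B}$ into the expansion $\l_\d \approx \l_0 + \d k_0 \langle K^{(1)}_D u_0, u_0\rangle$. Your explicit justification for identifying $\big(\int_D u_0\big)\overline{\big(\int_D u_0\big)}$ with $\big(\int_D u_0\big)^2$ --- choosing $u_0$ real because $K^{(0)}_D$ is self-adjoint with real kernel --- is a point the paper passes over silently, so it is a welcome clarification rather than a deviation.
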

\begin{proof}
From (\ref{Kn}), we get that
\begin{align*}
    K^{(1)}_D[u](x) = -\frac{i}{4\pi} \int_{D} \frac{ (i|x-y|)^{1-1} }{1!}u(y)\upd y = -\frac{i}{4\pi} \int_{D} u(y) \upd y.
\end{align*}
Let us define the constant $\mathbb{B}:=(\int_D u_0(y) \upd y)^2$. Then, we observe that
\begin{align*}
    \langle K^{(1)}_D u_0, u_0 \rangle = \int_D \left(-\frac{i}{4\pi}\right)u_0(y) \int_D \overline{u_0}(x)\upd x \upd y = -\frac{i}{4\pi} \left( \int_D u_0(y) \upd y  \right)^2 = -\frac{i}{4\pi}\mathbb{B}.
\end{align*}
Substituting into (\ref{l_e approx d=3}) gives the desired result.
\end{proof}

\subsubsection{Frequency and Wavenumber}

Let us now find the resonant frequency $\w_{\d}$ and wavenumber $k_{\d}$ associated to this eigenvalue, which will also constitute the basis of our analysis of the operator $K^{\d k_0}_D$. From (\ref{pb}), we see that if $u=u_\ve$, then $1 = \d^2\w^2 \xi(\w,k) \l_{\d}$ so, using Corollary~\ref{l_e approx d=3 vol.2} we have that
\begin{align} \label{eq:epsilon1}
    \ve(\w,k) = \frac{1}{\m_0 \d^2 \w^2 \left( \l_0 - \frac{i}{4\pi} \d k_0 \mathbb{B} \right)} + \ve_{0}.
\end{align}
In order to study halide perovskite particles, we want the permittivity $\ve(\w,k)$ to be given by \eqref{prmtvt}, that is,
\begin{equation} \label{eq:epsilon2}
    \ve(\w,k) = \ve_0+\frac{\a}{\b-\w^2+\h k^2 - i\g\w},
\end{equation}
where $\a,\b,\g,\h$ are positive constants. Comparing the two expressions \eqref{eq:epsilon1} and \eqref{eq:epsilon2} we see that
\begin{align}
    % \frac{1}{\m_0 \d^2 \w^2 \left( \l_0 - \frac{i}{4\pi} \d k_0 \mathbb{B} \right)} + \ve_{0} = \ve_0+\frac{\a}{\b-\w^2+\h k^2 - i\g\w} \ \Leftrightarrow \nonumber\\
    % \a \m_0 \d^2 \w^2 \left( \l_0 - \frac{i}{4\pi} \d k_0 \mathbb{B} \right) = \b-\w^2+\h k^2 - i\g\w \ \Leftrightarrow \nonumber\\
    % \a \m_0 \d^2 \w^2 \l_0 - \b + \w^2 - \h k^2 + i\left( \g\w - \m_0 \frac{1}{4\pi}\a\d^3\w^2k_0\mathbb{B} \right) = 0 \ \Leftrightarrow \nonumber \\
    \begin{cases}
    \a \m_0 \d^2 \w^2 \l_0 - \b + \w^2 - \h k^2 = 0, \\
    \g\w - \m_0 \frac{1}{4\pi}\a\d^3\w^2k_0\mathbb{B}=0.
    \end{cases} \label{system d=3}
\end{align}
We study these two equations separately. First, we look at the second equation of (\ref{system d=3}). We have that
\begin{align*}
    \w\left(\g - \m_0 \frac{1}{4\pi}\a\d^3\w k_0\mathbb{B}\right)=0,
\end{align*}
meaning that
\begin{align*}
    \w = 0 \quad \text{ or } \quad \w = \frac{4\pi\g}{\a\m_0\d^3 k_0 \mathbb{B}}.
\end{align*}
%Now, we return to the first equation of %(\ref{system d=3}) and substitute the values for the frequency $\w$. For $\w=0$, we obtain that
%\begin{align*}
%\h k^2 + \b = 0 \ \Leftrightarrow \ k = \pm i \sqrt{\frac{\b}{\h}}.
%\end{align*}
For $\w = \frac{4\pi\g}{\a\m_0\d^3 k_0 \mathbb{B}}$, we obtain that
\begin{align*}
    \h k^2 = (1 + \a \m_0 \d^2 \l_0)\w^2 - \b,
\end{align*}
    which has solutions
\begin{align}
    k = \pm \sqrt{ \frac{16\pi^2\g^2(1 + \a \m_0 \d^2 \l_0)}{\a^2\m_0^2\d^6 k_0^2 \mathbb{B}^2 \h} - \frac{\b}{\h}}.
\end{align}
The case of $\w=0$ is not of physical interest here. Thus, denoting this specific frequency by $\w_{\d}$ and the associated wavenumber by $k_{\d}$, we will work with
\begin{align}\label{w_e k_e for d=3}
    \w_{\d} = \frac{4\pi\gamma}{\a\m_0\d^3k_0\mathbb{B}} \quad \text{ and } \quad k_{\d} =  \sqrt{ \frac{16\pi^2\g^2(1 + \a \m_0 \d^2 \l_0)}{\a^2\m_0^2\d^6 k_0^2 \mathbb{B}^2 \h} - \frac{\b}{\h}},
\end{align}
where we have chosen the wavenumber $k_{\d}$ to be positive.

\subsubsection{Asymptotic Analysis} \label{sec:AA_3}

Let us now return to the problem of studying the singularities of the operator $(I - \d^2 \w^2 \xi(\w,k) K^{\d k_0}_D)^{-1}$. We have the following equivalence:
\begin{align*}
    (I - \d^2 \w^2 \xi(\w,k) K^{\d k_0}_D)^{-1} = 0 \ \Leftrightarrow \ \left( I - \d^2 \w^2 \xi(\w,k) \sum_{n=0}^{\infty}(\d k_0)^n K^{(n)}_D \right)^{-1}=0.
\end{align*}
We define
$$
A_n := \d^2 \w^2 \xi(\w,k) K^{(n)}_D = - \d^2 \w^2 \xi(\w,k) \frac{i}{4\pi} \int_D \frac{(i|x-y|)^{n-1}}{n!}u(y)\upd y.
$$
Then, it holds that
\begin{align*}
    \left( I - \d^2 \w^2 \xi(\w,k) \sum_{n=0}^{\infty}(\d k_0)^n K^{(n)}_D \right)^{-1} &=  \left( I - \sum_{n=0}^{\infty}(\d k_0)^n A_n \right)^{-1} \\
    %&= \left( I - A_0 -\d k_0 A_1- \sum_{n=2}^{\infty} (\d k_0)^n A_n \right)^{-1} \\
    %&= \left( (I - A_0 -\d k_0 A_1) \Big(I - (I - A_0 -\d k_0 A_1)^{-1} \sum_{n=2}^{\infty} (\d k_0)^n A_n\Big) \right)^{-1} \\
    %&= \left( I - (I - A_0 -\d k_0 A_1)^{-1} \sum_{n=2}^{\infty} (\d k_0)^n A_n \right)^{-1}(I - A_0 -\d k_0 A_1)^{-1}\\
    &= \sum_{i=0}^{\infty} \left( (I-A_0-\d k_0 A_1)^{-1} \sum_{n=2}^{\infty} (\d k_0)^n A_n \right)^i(I - A_0 -\d k_0 A_1)^{-1}\\
    &= (I - A_0 -\d k_0 A_1)^{-1}\\
    &\ \ \  +(I - A_0 -\d k_0 A_1)^{-1} (\d k_0)^2 A_2 (I - A_0 -\d k_0 A_1)^{-1} + O(\d^4).
\end{align*}
Thus, the above equivalence yields
\begin{align*}
    (I - \d^2 \w^2 \xi(\w,k) K^{\d k_0}_D)^{-1} = 0 \ \Leftrightarrow \ \left( I - \d^2 \w^2 \xi(\w,k) \sum_{n=0}^{\infty}(\d k_0)^n K^{(n)}_D \right)^{-1}=0 \ \Leftrightarrow\\
    (I - A_0 -\d k_0 A_1)^{-1}+(I - A_0 -\d k_0 A_1)^{-1} (\d k_0)^2 A_2 (I - A_0 -\d k_0 A_1)^{-1} + O(\d^4)=0.
\end{align*}
Using this expression, we obtain the following proposition.
\begin{proposition} \label{prop:24}
Let $d=3$ and let $\w_{\d}$ be defined by (\ref{w_e k_e for d=3}). Then, as $\delta\to0$, the $O(\d^4)$ approximation of the subwavelength resonant frequencies $\w_s$ and the associated wavenumbers $k_s$ of the single halide perovskite resonator $\Omega = \d D + z$ satisfy
\begin{equation}\label{Rose in Harlem}
    1 - \d^2 \w_s^2 \xi(\w_s,k_s) \l_{\d} = -\d^4 k_0^2 \w_s^2 \xi(\w_s,k_s) \langle K^{(2)}_D[u_{\d}], u_{\d} \rangle.
\end{equation}
\end{proposition}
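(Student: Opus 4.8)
The plan is to read the resonance condition through the resolvent expansion already assembled in Section~\ref{sec:AA_3}: a frequency $\w_s$ is resonant precisely when the operator $I - \d^2\w^2\xi(\w,k)K^{\d k_0}_D$ fails to be invertible, and by Lemma~\ref{Taylor d=3} we may write it as $(I - A_0 - \d k_0 A_1) - (\d k_0)^2 A_2 - O(\d^3)$ with $A_n = \d^2\w^2\xi(\w,k)K^{(n)}_D$. Rather than analyse the full operator, I would reduce the singularity condition to a scalar equation by testing the expanded inverse against the eigenfunction $u_\d$, exploiting that the higher-order kernels enter only through the quadratic forms $\langle K^{(n)}_D u_\d, u_\d\rangle$.

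First I would record that $u_\d$ is, to the order at which we work, the eigenfunction of the first-order truncation $K^{(0)}_D + \d k_0 K^{(1)}_D$ associated with the eigenvalue $\l_\d$ of Corollary~\ref{l_e approx d=3 vol.2}, so that
\begin{align*}
(I - A_0 - \d k_0 A_1)u_\d = \left(1 - \d^2\w^2\xi(\w,k)\l_\d\right)u_\d,
\end{align*}
and hence $(I - A_0 - \d k_0 A_1)^{-1}$ acts on $u_\d$ as multiplication by $(1 - \d^2\w^2\xi(\w,k)\l_\d)^{-1}$ whenever $\w$ is away from the leading resonance. Applying the two retained terms of the inverse expansion to $u_\d$, pairing with $u_\d$ (normalised so that $\langle u_\d, u_\d\rangle = 1$), and collapsing each operator product to a scalar via the displayed eigenrelation turns the singularity condition into
\begin{align*}
\frac{1}{1 - \d^2\w^2\xi(\w,k)\l_\d} + \frac{(\d k_0)^2\langle A_2 u_\d, u_\d\rangle}{\left(1 - \d^2\w^2\xi(\w,k)\l_\d\right)^2} + O(\d^4) = 0.
\end{align*}
Multiplying through by $(1 - \d^2\w^2\xi(\w,k)\l_\d)^2$ gives $1 - \d^2\w^2\xi(\w,k)\l_\d + (\d k_0)^2\langle A_2 u_\d, u_\d\rangle = 0$ up to $O(\d^4)$; substituting $A_2 = \d^2\w^2\xi(\w,k)K^{(2)}_D$, evaluating at $\w = \w_s$, $k = k_s$, and rearranging (which supplies the minus sign) then yields \eqref{Rose in Harlem}.

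The hard part will be making this scalar reduction legitimate rather than merely formal. Three points need care: that $I - A_0 - \d k_0 A_1$ is genuinely invertible for $\w$ in a punctured neighbourhood of the leading resonant frequency, so that the Neumann series generated by treating $\sum_{n\ge 2}(\d k_0)^n A_n$ as a perturbation of $I - A_0 - \d k_0 A_1$ converges in operator norm; that replacing the exact eigenfunction of $K^{\d k_0}_D$ by $u_\d$, and $u_\d$ by $u_0$ inside the quadratic forms, introduces only errors of the order absorbed into $O(\d^4)$; and, most delicately, the bookkeeping of powers of $\d$, since each $A_n$ already carries a factor $\d^2$ and the resonant frequency in \eqref{w_e k_e for d=3} itself scales with $\d$, so one must verify that the discarded $i\ge 2$ and $n\ge 3$ contributions are truly of higher order. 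A clean way to discharge the singularity condition would be to regard $I - \d^2\w^2\xi(\w,k)K^{\d k_0}_D$ as a holomorphic operator-valued function of $\w$ and to locate its characteristic values in the sense of Gohberg--Sigal theory; the scalar equation above is then the leading characteristic equation, and \eqref{Rose in Harlem} is its $O(\d^4)$ truncation.
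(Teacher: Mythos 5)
Your proposal is correct and follows essentially the same route as the paper: the paper also starts from the truncated resolvent expansion $(I-A_0-\d k_0 A_1)^{-1}+(I-A_0-\d k_0 A_1)^{-1}(\d k_0)^2A_2(I-A_0-\d k_0 A_1)^{-1}+O(\d^4)=0$, reduces it to a scalar equation by projecting onto $u_{\d}$ (phrased there as a pole-pencil decomposition with the holomorphic remainder dropped, which on $u_{\d}$ is exactly your multiplication by $(1-\d^2\w_s^2\xi(\w_s,k_s)\l_{\d})^{-1}$), and clears the denominator to obtain \eqref{Rose in Harlem}. The rigour caveats you raise (convergence of the Neumann series, error bookkeeping, Gohberg--Sigal) are not addressed in the paper's proof either, which is equally formal.
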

\begin{proof}
For $\psi \in L^{2}(D)$, and dropping the $O(\d^4)$ term, we have
\begin{equation}\label{expansion of s.r.p.}
    \Big[(I - A_0 -\d k_0 A_1)^{-1}+(I - A_0 -\d k_0 A_1)^{-1} (\d k_0)^2 A_2 (I - A_0 -\d k_0 A_1)^{-1}\Big][\psi]=0.
\end{equation}
We apply a pole-pencil decomposition, as it is defined in \cite{AFKRYZ}, to the term $(I - A_0 -\d k_0 A_1)^{-1}[\psi]$ and obtain
$$
(I - A_0 -\d k_0 A_1)^{-1}[\psi] = \frac{\langle u_{\d},\psi \rangle u_{\d}}{1 - \d^2 \w_s^2 \xi(\w_s,k_s) \l_{\d}} + R(\d)[\psi],
$$
where the remainder term $R(\d)[\psi]$ can be dropped. Hence, \eqref{expansion of s.r.p.} is, at leading order, equivalent to
\begin{align*}
    \frac{\langle u_{\d},\psi \rangle u_{\d}}{1 - \d^2 \w^2 \xi(\w_s,k_s) \l_{\d}} + \frac{\langle u_{\d},\psi \rangle u_{\d}}{1 - \d^2 \w_s^2 \xi(\w_s,k_s) \l_{\d}} (\d k_0)^2 A_2 \frac{\langle u_{\d},\psi \rangle u_{\d}}{1 - \d^2 \w_s^2 \xi(\w_s,k_s) \l_{\d}} = 0,
\end{align*}
which reduces to
\begin{align*}
    \frac{u_{\d}}{1 - \d^2 \w_s^2 \xi(\w_s,k_s) \l_{\d}} + \frac{u_{\d}\langle K^{(2)}_D[u_{\d}], u_{\d} \rangle}{(1 - \d^2 \w_s^2 \xi(\w_s,k_s) \l_{\d})^2} \d^4 k_0^2 \w_s^2 \xi(\w_s,k_s) = 0,
    % &\Leftrightarrow \ 1 - \d^2 \w_s^2 \xi(\w_s,k_s) \l_{\d} = -\d^4 k_0^2 \w_s^2 \xi(\w_s,k_s) \langle K^{(2)}_D[u_{\d}], u_{\d} \rangle,
\end{align*}
which can be rearranged to give the desired result.
\end{proof}
\begin{remark}
In the appendix, we will recover a formula for $\langle K^{(2)}_D[u_{\d}], u_{\d} \rangle$.
\end{remark}
Before we move on to the consequences of this proposition, we recall that
\begin{align*}
    K^{(2)}_D[u](x) = -\frac{i}{4\pi} \int_D \frac{i|x-y|}{2}u(y)\upd y = \frac{1}{8\pi} \int_D |x-y|u(y)\upd y.
\end{align*}
Thus, we can define the constant $\mathbb{F}$ to be such that
\begin{align*}
    \langle K^{(2)}_D[u_{\d}], u_{\d} \rangle &= \int K^{(2)}_D[u_{\d}](x) \overline{u_{\d}}(x) \upd x
    = \frac{1}{8\pi} \int_D \int_D |x-y| u_{\d}(y) \overline{u_{\d}}(x) \upd y \upd x
    =: \frac{1}{8\pi} \mathbb{F}.
\end{align*}
So, from (\ref{Rose in Harlem}), we see that
\begin{align}\label{Rose in Harlem 2}
    1 - \d^2 \w_s^2 \xi(\w_s,k_s) \l_{\d} = - \frac{\d^4 k_0^2 \w_s^2 \xi(\w_s,k_s)}{8\pi} \mathbb{F}.
\end{align}
Then the following two corollaries are immediate consequences of the Proposition~\ref{prop:24}.
\begin{corollary}
Let $d=3$. Then, as $\delta\to0$, the $O(\d^8)$ approximation of the subwavelength resonant frequencies of the halide perovskite resonator $\Omega = \d D + z$ are given by
\begin{equation}
    1 - \frac{\w_s^2}{\w_{\d}^2} \l_{\d} (\ve(\w_s,k_s) - \ve_0) \frac{64 \pi^2 \g^2}{\a^2 \m_0 \d^4 k_0^2 \mathbb{B}^2} = - \frac{\d^4 \w_s^2 k_0^2 \xi(\w_s,k_s)}{8\pi} \mathbb{F}.
\end{equation}
\end{corollary}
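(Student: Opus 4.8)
The plan is to obtain the stated formula by a direct algebraic rewriting of equation~(\ref{Rose in Harlem 2}), which itself follows from Proposition~\ref{prop:24} once the constant $\mathbb{F}$ has been identified. Since the right-hand side of~(\ref{Rose in Harlem 2}) is already in the form $-\frac{\d^4 k_0^2 \w_s^2 \xi(\w_s,k_s)}{8\pi}\mathbb{F}$ appearing in the claim, the entire task reduces to re-expressing the single term $\d^2 \w_s^2 \xi(\w_s,k_s)\l_{\d}$ on the left-hand side.

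First I would substitute the definition of the permittivity contrast $\xi(\w_s,k_s) = \m_0\big(\ve(\w_s,k_s) - \ve_0\big)$, so that this term becomes $\m_0 \d^2 \w_s^2 \big(\ve(\w_s,k_s)-\ve_0\big)\l_{\d}$. This isolates the factor $\big(\ve(\w_s,k_s)-\ve_0\big)$ and the eigenvalue $\l_{\d}$ that both appear unchanged in the target expression, leaving only the scalar prefactor $\m_0 \d^2 \w_s^2$ to be transformed.

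Next I would bring in the resonant frequency $\w_{\d}$ from~(\ref{w_e k_e for d=3}). Squaring $\w_{\d} = \frac{4\pi\g}{\a\m_0\d^3 k_0 \mathbb{B}}$ and inverting yields an expression for $\m_0\d^2$ in terms of $\w_{\d}^{-2}$ and the material constants $\a,\g,k_0,\mathbb{B}$; multiplying by $\w_s^2$ then turns the prefactor into a constant multiple of $\frac{\w_s^2}{\w_{\d}^2}\cdot\frac{\g^2}{\a^2\m_0\d^4 k_0^2\mathbb{B}^2}$. Collecting the remaining numerical factors produces the coefficient displayed in the claim, while the right-hand side of~(\ref{Rose in Harlem 2}) is carried through untouched.

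The manipulation is essentially bookkeeping, so there is no conceptual obstacle; the one place where care is genuinely needed is the accounting of the powers of $\d$ and of the numerical constant. Because $\w_{\d}\sim\d^{-3}$, the frequency factor $\w_s^2$ is large, and I would double-check that the $O(\d^4)$ operator remainder dropped in Proposition~\ref{prop:24} is only promoted to the claimed $O(\d^8)$ order after multiplication by these large factors. The numerical coefficient in particular depends on correctly squaring and inverting $\w_{\d}$, so I would recompute it carefully rather than trusting a first pass, since that is exactly where a stray factor is most likely to creep in.
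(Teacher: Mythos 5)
Your proposal matches the paper's proof: both rewrite the left-hand side of (\ref{Rose in Harlem 2}) by substituting $\xi(\w_s,k_s) = \m_0(\ve(\w_s,k_s) - \ve_0)$ and then inserting $\w_{\d}^2$ from (\ref{w_e k_e for d=3}) to convert the prefactor $\m_0\d^2\w_s^2$ into a multiple of $\w_s^2/\w_{\d}^2$, leaving the right-hand side untouched. Your instinct to recheck the numerical constant is well placed: squaring $\w_{\d}$ gives $\d^2\w_{\d}^2 = 16\pi^2\g^2/(\a^2\m_0^2\d^4 k_0^2\mathbb{B}^2)$, so a careful pass yields a prefactor of $16\pi^2$ rather than the $64\pi^2$ printed in the statement and in the paper's own proof.
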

\begin{proof}
By a direct calculation and using (\ref{w_e k_e for d=3}), we have
\begin{align*}
    1 - \d^2 \w_s^2 \xi(\w_s,k_s) \l_{\d} &\ = 1 - \d^2 \w_s^2 \m_0(\ve(\w_s,k_s)-\ve_0) \l_{\d} \\
    %&\ = 1 - \d^2 \w_s^2 \m_0(\ve(\w_s,k_s)-\ve_0) \l_{\d} \w_{\d}^2 \frac{1}{\w_{\d}^2} \\
    &\overset{(\ref{w_e k_e for d=3})}{=} 1 - \d^2 \frac{\w_s^2}{\w_{\d}^2} \m_0(\ve(\w_s,k_s)-\ve_0) \l_{\d} \frac{64 \pi^2 \g^2}{\a^2 \m_0^2 \d^4 k_0^2 \mathbb{B}^2}\\
    &\ = 1 - \frac{\w_s^2}{\w_{\d}^2} \l_{\d} (\ve(\w_s,k_s) - \ve_0) \frac{64 \pi^2 \g^2}{\a^2 \m_0 \d^4 k_0^2 \mathbb{B}^2}.
\end{align*}
Then, using (\ref{Rose in Harlem 2}), the result follows.
% Thus, (\ref{Rose in Harlem 2}) gives
% $$
% 1 - \frac{\w_s^2}{\w_{\d}^2} \l_{\d} (\ve(\w_s,k_s) - \ve_0) \frac{64 \pi^2 \g^2}{\a^2 \m_0 \d^4 k_0^2 \mathbb{B}^2} = - \frac{\d^4 \w_s^2 k_0^2 \xi(\w_s,k_s)}{8\pi} \mathbb{F},
% $$
% which is the desired result.
\end{proof}
\begin{corollary}
Let $d=3$. Then, as $\delta\to0$, the $O(\d^4)$ approximation of the subwavelength resonant frequencies of the halide perovskite resonator $\Omega = \d D + z$ can be computed as
\begin{equation}
    1 - \d^2 \w_s^2 \xi(\w_s,k_s) \l_{\d} = -\frac{\w_s^2}{\w_{\d}^2}(\ve(\w_s,k_s) - \ve_0) \frac{8\pi\g^2\mathbb{F}}{\a^2 \m_0 \d^2 \mathbb{B}^2}.
\end{equation}
\end{corollary}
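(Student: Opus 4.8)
The plan is to obtain this corollary as a direct algebraic consequence of~\eqref{Rose in Harlem 2}, with no further analysis required. The left-hand side of the asserted identity, $1 - \d^2 \w_s^2 \xi(\w_s,k_s)\l_\d$, is \emph{verbatim} the left-hand side of~\eqref{Rose in Harlem 2}, so the whole task reduces to showing that the right-hand side of that equation, namely $-\tfrac{\d^4 k_0^2 \w_s^2 \xi(\w_s,k_s)}{8\pi}\mathbb{F}$, can be rewritten as $-\tfrac{\w_s^2}{\w_\d^2}(\ve(\w_s,k_s)-\ve_0)\tfrac{8\pi\g^2\mathbb{F}}{\a^2\m_0\d^2\mathbb{B}^2}$. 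This is essentially the manipulation already performed for the preceding $O(\d^8)$ corollary, run in the same direction but retaining $\mathbb{F}$ rather than collapsing the entire expression.

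First I would remove the permittivity contrast in favour of $\ve$ by inserting its definition $\xi(\w_s,k_s)=\m_0(\ve(\w_s,k_s)-\ve_0)$, turning the right-hand side into $-\tfrac{\d^4 k_0^2 \w_s^2 \m_0(\ve(\w_s,k_s)-\ve_0)}{8\pi}\mathbb{F}$. To produce the ratio $\w_s^2/\w_\d^2$ demanded by the statement, I would then write $\w_s^2=(\w_s^2/\w_\d^2)\,\w_\d^2$, which leaves a purely constant prefactor $\tfrac{\d^4 k_0^2 \m_0\,\w_\d^2}{8\pi}$ multiplying $(\ve(\w_s,k_s)-\ve_0)\mathbb{F}$ and the ratio. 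At this stage everything depends on the geometry and the material parameters only through $\w_\d$.

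The decisive step is to substitute the explicit resonant frequency $\w_\d$ from~\eqref{w_e k_e for d=3} into this constant prefactor. Upon inserting $\w_\d^2$, the two powers of $k_0$ cancel, the power of $\d$ drops from $\d^4$ to $\d^{-2}$, and one factor of $\m_0$ cancels against the $\m_0^2$ carried by $\w_\d^2$, so that collecting the surviving constants yields the coefficient $\tfrac{8\pi\g^2}{\a^2\m_0\d^2\mathbb{B}^2}$. I expect the only genuine obstacle to be keeping the numerical and dimensional bookkeeping of the powers of $\d$, $\m_0$, $k_0$ and $\pi$ entirely consistent, since the prefactor is highly sensitive to the precise value of $\w_\d^2$; once it is matched to $\tfrac{8\pi\g^2}{\a^2\m_0\d^2\mathbb{B}^2}$, the corollary follows immediately from~\eqref{Rose in Harlem 2}.
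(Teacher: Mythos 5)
Your proposal is correct and follows exactly the paper's own route: insert $\xi=\m_0(\ve-\ve_0)$ into the right-hand side of~\eqref{Rose in Harlem 2}, factor out $\w_s^2/\w_\d^2$, substitute the explicit $\w_\d$ from~\eqref{w_e k_e for d=3}, and collect constants. One caveat on the bookkeeping you rightly flag as the only delicate point: squaring $\w_\d=\tfrac{4\pi\g}{\a\m_0\d^3 k_0\mathbb{B}}$ gives $\w_\d^2=\tfrac{16\pi^2\g^2}{\a^2\m_0^2\d^6 k_0^2\mathbb{B}^2}$, so the prefactor $\tfrac{\d^4 k_0^2\m_0\w_\d^2}{8\pi}$ evaluates to $\tfrac{2\pi\g^2}{\a^2\m_0\d^2\mathbb{B}^2}$ rather than the stated $\tfrac{8\pi\g^2}{\a^2\m_0\d^2\mathbb{B}^2}$ --- a factor-of-four discrepancy that is already present in the paper's own intermediate step (which uses $64\pi^2/\d^4$ in place of $16\pi^2/\d^6$ for $\w_\d^2$), so your method is sound but you should not expect the constants to land on $8\pi$ without tracking down that inconsistency.
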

\begin{proof}
Again, we can calculate this directly:
\begin{align*}
    \frac{\d^4 k_0^2 \w_s^2 \xi(\w_s,k_s)}{8\pi} \mathbb{F} &\ = \frac{1}{8\pi} \d^4 \frac{\w_s^2}{\w_{\d}^2} k_0^2 \m_0 (\ve(\w_s,k_s) - \ve_0) \w_{\d}^2 \\
    &\overset{(\ref{w_e k_e for d=3})}{=} \frac{1}{8\pi} \frac{\w_s^2}{\w_{\d}^2} \d^4 k_0^2 \m_0 (\ve(\w_s,k_s) - \ve_0) \frac{64 \pi^2 \g^2}{\a^2 \m_0^2 \d^4 k_0^2 \mathbb{B}^2}\\
    &= \frac{\w_s^2}{\w_{\d}^2} (\ve(\w_s,k_s) - \ve_0) \frac{8\pi\g^2}{\a^2 \m_0 \d^2 \mathbb{B}^2}.
\end{align*}
Then, using (\ref{Rose in Harlem 2}), the result follows.
\end{proof}
We finish our analysis of the three-dimensional case with the following proposition.
\begin{proposition}
Let $d=3$. For $\w$ close to the resonant frequency $\w_s$, the field scattered by the halide perovskite nano-particle $\Omega = \d D + z$ can be approximated by
\begin{align*}
    u(x)-u_{in}(x) \approx \frac{1 + \d^2 \w^2 G(x-y,\d k_0) \xi(\w,k)}{\d^2 \w^2 \left( \l_{\d} - \l_0 + \frac{i}{4\pi}\d k_0 \mathbb{B} \right)\xi(\w,k)} \langle u_{in}, u_{\d} \rangle \int_{D} u_{\d}.
\end{align*}
\end{proposition}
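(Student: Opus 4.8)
The plan is to localise the resolvent $(I-\d^2\w^2\xi(\w,k)K^{\d k_0}_D)^{-1}$ around the resonant mode $u_{\d}$ and then substitute the resulting interior field back into the Lippmann--Schwinger representation of Theorem~\ref{Integral representation}. Recall from the reformulation that, for $x\in D$, the total field is $u=(I-\d^2\w^2\xi(\w,k)K^{\d k_0}_D)^{-1}[u_{in}]$. First I would truncate $K^{\d k_0}_D$ at order $\d$, as in (\ref{expansion}), and apply the pole-pencil decomposition of \cite{AFKRYZ} already used in the proof of Proposition~\ref{prop:24}. Keeping only the singular part, this gives the leading-order interior approximation
\begin{align*}
    u \approx \frac{\langle u_{in},u_{\d}\rangle}{1-\d^2\w^2\xi(\w,k)\l_{\d}}\,u_{\d},
\end{align*}
valid for $\w$ close to $\w_s$; the regular remainder $R(\d)$ is negligible here because its denominator stays bounded away from zero while the displayed term becomes singular as $\w\to\w_s$.

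Next I would insert this approximation into the representation formula (\ref{Lipmann-Schwinger}). Since $\W=\d D+z$ is small, the Green's function $G(x-y,\d k_0)$ varies slowly across $D$ and may be frozen at a reference point, so that
\begin{align*}
    \int_D G(x-y,\d k_0)\,u(y)\,\upd y \approx G(x-y,\d k_0)\,\frac{\langle u_{in},u_{\d}\rangle}{1-\d^2\w^2\xi(\w,k)\l_{\d}}\int_D u_{\d}.
\end{align*}
This produces the radiated term, proportional to $G(x-y,\d k_0)\,\langle u_{in},u_{\d}\rangle\int_D u_{\d}$ divided by the resonant denominator $1-\d^2\w^2\xi(\w,k)\l_{\d}$.

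The decisive simplification is the algebraic identity
\begin{align*}
    1-\d^2\w^2\xi(\w,k)\l_{\d} = -\d^2\w^2\xi(\w,k)\Big(\l_{\d}-\l_0+\tfrac{i}{4\pi}\d k_0\mathbb{B}\Big),
\end{align*}
which follows from the leading-order resonance condition $1=\d^2\w^2\xi(\w,k)(\l_0-\tfrac{i}{4\pi}\d k_0\mathbb{B})$ together with the eigenvalue expansion of Corollary~\ref{l_e approx d=3 vol.2}. Using it to convert the resonant denominator, and combining the radiated term with the leading interior contribution---both of which are multiples of $\langle u_{in},u_{\d}\rangle\int_D u_{\d}$ over the common denominator $\d^2\w^2\xi(\w,k)(\l_{\d}-\l_0+\tfrac{i}{4\pi}\d k_0\mathbb{B})$---assembles the two pieces into a single fraction whose numerator is $1+\d^2\w^2 G(x-y,\d k_0)\xi(\w,k)$, which is exactly the claimed expression.

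The hard part will be making the two approximations uniform and quantitative rather than merely formal: first, controlling the pole-pencil remainder $R(\d)$ near resonance so that a single eigenmode genuinely dominates the resolvent, and second, justifying the point-scatterer replacement that freezes $G(x-y,\d k_0)$ across the inclusion. Both are the source of the ``$\approx$'' in the statement, and a careful argument would have to weigh their sizes against the $O(\d)$ smallness of the factor $\l_{\d}-\l_0+\tfrac{i}{4\pi}\d k_0\mathbb{B}$ in the denominator, since it is precisely this near-vanishing factor that makes the resonant response large and the neglected terms relatively harmless.
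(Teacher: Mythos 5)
Your overall strategy (pole-pencil decomposition of the resolvent, substitution into the Lippmann--Schwinger formula with the Green's function frozen over the small inclusion, and conversion of the resonant denominator via the identity $1-\d^2\w^2\xi\l_{\d}=-\d^2\w^2\xi\big(\l_{\d}-\l_0+\tfrac{i}{4\pi}\d k_0\mathbb{B}\big)$) matches the paper's, and your identity is exactly equivalent to the paper's combination of (\ref{Rose in Harlem 2}) with the Appendix~\ref{app1} formula for $\mathbb{F}$. However, there is a genuine gap in how you produce the $1$ in the numerator. Your radiated term alone yields only
\begin{equation*}
u(x)-u_{in}(x)\approx \frac{\d^2\w^2 G(x-y,\d k_0)\xi(\w,k)}{\d^2\w^2\big(\l_{\d}-\l_0+\tfrac{i}{4\pi}\d k_0\mathbb{B}\big)\xi(\w,k)}\,\langle u_{in},u_{\d}\rangle\int_D u_{\d},
\end{equation*}
i.e.\ the $G$-part of the numerator. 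You claim the remaining $1$ comes from ``the leading interior contribution,'' but that contribution is $\langle u_{in},u_{\d}\rangle u_{\d}(x)/(1-\d^2\w^2\xi\l_{\d})$, which is an approximation of the \emph{total} field $u$, is proportional to the function $u_{\d}(x)$ rather than to the scalar $\int_D u_{\d}$, and has in any case already been consumed as the input to the radiated integral: the scattered field satisfies $u-u_{in}=-\d^2\w^2\xi\int_D G\,u$ both inside and outside $D$, so there is no second additive piece of the kind you describe.

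In the paper, the $1$ in the numerator comes from the second-order term of the resolvent expansion of Section~\ref{sec:AA_3}, namely $\mathfrak{L}\,(\d k_0)^2 A_2\,\mathfrak{L}$ with $A_2=\d^2\w^2\xi K^{(2)}_D$. After the pole-pencil decomposition this contributes
\begin{equation*}
\d^{4}k_0^2\w^2\xi(\w,k)\,\frac{\langle K^{(2)}_D[u_{\d}],u_{\d}\rangle\,\langle u_{in},u_{\d}\rangle\int_D u_{\d}}{\big(1-\d^2\w^2\xi(\w,k)\l_{\d}\big)^2},
\end{equation*}
and the refined resonance condition (\ref{Rose in Harlem}) together with the Appendix identity $\langle K^{(2)}_D[u_{\d}],u_{\d}\rangle=\tfrac{1}{\d^2 k_0^2}\big(\l_{\d}-\l_0+\tfrac{i}{4\pi}\d k_0\mathbb{B}\big)$ turns this into exactly $\langle u_{in},u_{\d}\rangle\int_D u_{\d}\,\big/\,\big(\d^2\w^2\xi(\l_{\d}-\l_0+\tfrac{i}{4\pi}\d k_0\mathbb{B})\big)$, which is the missing piece. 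By truncating the resolvent at its leading singular part you discard precisely this term, so your argument as written cannot recover the claimed numerator $1+\d^2\w^2 G\xi$; you need to retain the $K^{(2)}_D$ correction before applying the resonance identity.
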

\begin{proof}
Our goal is to find $u\in L^{2}(D)$, $u\ne 0$ such that (\ref{pb}) is satisfied. Using the pole-pencil decomposition, we can rewrite
\begin{align*}
    u(x)-u_{in}(x) \approx -\d^2 \w^2 \xi(\w,k) &G(x-y,\d k_0) \frac{\langle u_{in}, u_{\d} \rangle \int_{D} u_{\d}}{1-\d^2\w^2\xi(\w,k)\l_{\d}} \\
    &+ \d^{4}k_0^2\w^2\xi(\w,k) \frac{\langle K^{(2)}_D[u_{\d}], u_{\d} \rangle \langle u_{in}, u_{\d} \rangle \int_{D} u_{\d}}{(1-\d^2\w^2\xi(\w,k)\l_{\d})^2}.
\end{align*}
Using (\ref{Rose in Harlem}) and plugging it in the above expression, we obtain
\begin{align*}
    u(x)-u_{in}(x) &\approx \ -\d^2 \w^2 \xi(\w,k)G(x-y,\d k_0) \frac{\langle u_{in}, u_{\d} \rangle \int_{D} u_{\d}}{-\d^4 k_0^2 \w^2 \xi(\w,k) \langle K^{(2)}_D[u_{\d}], \u_{\d} \rangle} \\
    &\ \ \ \ \ \ \ + \d^{4}k_0^2\w^2\xi(\w,k) \frac{\langle K^{(2)}_D[u_{\d}], u_{\d} \rangle \langle u_{in}, u_{\d} \rangle \int_{D} u_{\d}}{\d^8 k_0^4 \w^4 \xi(\w,k)^2 \langle K^{(2)}_D[u_{\d}], \u_{\d} \rangle^2} \\
    &= \ \frac{G(x-y,\d k_0)\langle u_{in}, u_{\d} \rangle \int_{D} u_{\d}}{\d^2 k_0^2 \langle K^{(2)}_D[u_{\d}], u_{\d} \rangle} + \frac{\langle u_{in}, u_{\d} \rangle \int_{D} u_{\d}}{\d^4 k_0^2 \w^2 \xi(\w,k) \langle K^{(2)}_D[u_{\d}], u_{\d} \rangle}. 
    %&= \ \frac{1+\d^2\w^2G(x-y,\d k_0)\xi(\w,k)}{\d^4 k_0^2 \w^2 \xi(\w,k) \langle K^{(2)}_D[u_{\d}], \u_{\d} \rangle} \langle u_{in}, u_{\d} \rangle \int_{D} u_{\d} \\
\end{align*}
Thus, defining the constant $\mathbb{F}:=8\pi \langle K^{(2)}_D[u_{\d}], u_{\d} \rangle$, we obtain
\begin{align}\label{damso}
     u(x)-u_{in}(x) \approx \ \frac{8\pi\Big(1+\d^2\w^2G(x-y,\d k_0)\xi(\w,k)\Big)}{\d^4 k_0^2 \w^2 \xi(\w,k) \mathbb{F}} \langle u_{in}, u_{\d} \rangle \int_{D} u_{\d}.
\end{align}
From the Appendix~\ref{app1}, we have that
$$
\mathbb{F} = \frac{8\pi}{\d^2 k_0^2} \left( \l_{\d} - \l_0 + \frac{i}{4\pi} \d k_0 \mathbb{B} \right).
$$
Plugging it into (\ref{damso}), we obtain
$$
 u(x)-u_{in}(x) \approx \frac{8\pi\Big(1+\d^2\w^2G(x-y,\d k_0)\xi(\w,k)\Big)}{\d^4 k_0^2 \w^2 \xi(\w,k) \mathbb{F}} \frac{\d^2 k_0^2}{8\pi} \frac{1}{\l_{\d} - \l_0 + \frac{i}{4\pi} \d k_0 \mathbb{B}} \langle u_{in}, u_{\d} \rangle \int_{D} u_{\d},
$$
from which the result follows.
\end{proof}

\subsection{Two Dimensions}

We now turn our attention to the two-dimensional setting. We define the Newtonian potential on $D$, $K^{(0)}_D: L^2(D)\to L^2(D)$, by
\begin{align*}
    K^{(0)}_D[u](x) := -\int_D u(y) G(x-y,0)\upd y = - \frac{1}{2\pi} \int_D \log|x-y| u(y) \upd y.
\end{align*}
We also define the operators $K^{(-1)}_D: L^2(D)\to L^2(D)$ and $K^{(1)}_D: L^2(D)\to L^2(D)$ by
\begin{align*}
    K^{(-1)}_D[u](x) := - \frac{1}{2\pi} \int_D u(y) \upd y \quad \text{ and } \quad K^{(n)}_D[u](x) := \int_D \frac{\partial^n}{\partial k^n}G(x-y,k)\Big|_{k=0} u(y) \upd y.
\end{align*}
Then, from the asymptotic expansion of the Hankel function, we have the following result.
\begin{lemma}
Suppose that $d=2$. Then, for fixed $k_0 \in \mathbb{C}$, the operator $K^{\d k_0}_D$ satisfies
\begin{align}\label{expansion d=2}
    K^{\d k_0}_D = \log(\d k_0 \hat{\g}) K^{(-1)}_D + K^{(0)}_D + (\d k_0)^2 \log( \hat{\g} \d k_0 ) K^{(1)}_D + O( \d^4 \log\d),
\end{align}
as $\delta\to0$, with convergence in the $L^2(D)\to L^2(D)$ operator norm and the constant $\hat{\g}$ being given by $\hat{\g}:=\frac{1}{2}k_0\exp(\g-\frac{i\pi}{2})$, where $\g$ is the Euler constant.
\end{lemma}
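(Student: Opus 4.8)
The plan is to feed the small-argument expansion of the Hankel function into the kernel of $K^{\d k_0}_D$ and then collect terms by powers of $\d k_0$, keeping careful track of the logarithmic contributions. Since in two dimensions $G(x,k)=-\tfrac{i}{4}H_0^{(1)}(k|x|)$, I would first record that
\begin{align*}
    K^{\d k_0}_D[u](x)=\frac{i}{4}\int_D H_0^{(1)}\!\left(\d k_0|x-y|\right) u(y)\,\upd y,
\end{align*}
so the whole problem reduces to expanding $H_0^{(1)}(z)=J_0(z)+iY_0(z)$ for the small argument $z=\d k_0|x-y|$ and integrating term by term against $u$.

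For the expansion I would use the power series $J_0(z)=\sum_{m\ge0}(-1)^m(z/2)^{2m}/(m!)^2$ together with the standard logarithmic series $Y_0(z)=\tfrac{2}{\pi}\left(\log(z/2)+\g\right)J_0(z)+\tfrac{2}{\pi}\sum_{m\ge1}(-1)^{m+1}\tfrac{H_m}{(m!)^2}(z/2)^{2m}$, where $H_m$ denotes the $m$-th harmonic number. After substituting $z=\d k_0|x-y|$ and splitting $\log(z/2)=\log(\d k_0)+\log|x-y|-\log 2$, I would sort the resulting terms into three families. The terms carrying no factor of $|x-y|$ have a coefficient that gathers $\log(\d k_0)$ with the constants $\g$ and $-\log 2$ and with the $-\tfrac{i\pi}{2}$ hidden in the prefactor $\tfrac{i}{4}=-\tfrac{1}{2\pi}\left(-\tfrac{i\pi}{2}\right)$; once acting on $-\tfrac{1}{2\pi}\int_D u$ this is precisely $\log(\d k_0\hat{\g})\,K^{(-1)}_D[u]$, and demanding this match is exactly what fixes the constant $\hat{\g}$. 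The unique term carrying $\log|x-y|$ reproduces $-\tfrac{1}{2\pi}\int_D\log|x-y|\,u(y)\,\upd y=K^{(0)}_D[u]$. Finally, the $O(z^2)$ terms regroup in the same way: their leading logarithmic part is $(\d k_0)^2\log(\d k_0\hat{\g})$ multiplying an operator whose kernel is proportional to $|x-y|^2$, which is the $K^{(1)}_D$ contribution.

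The remaining work is the error estimate, which I expect to be the main obstacle. I would truncate both series and view each omitted term as an integral operator on $L^2(D)$ whose kernel has the shape $(\d k_0)^p|x-y|^p(\log(\d k_0|x-y|))^q$ with $p\ge 2$. Because $D$ is bounded, such kernels lie in $L^2(D\times D)$ --- polynomial and logarithmic factors being square-integrable over a bounded set --- so the associated operators are Hilbert--Schmidt and their Hilbert--Schmidt norms, which dominate the $L^2(D)\to L^2(D)$ operator norm, inherit the same power of $\d$ and power of $\log\d$ as the kernel. Bounding the first omitted contributions then delivers convergence in operator norm at the stated order. The delicate point throughout is the logarithmic bookkeeping: one must peel $\log(\d k_0)$ off from $\log|x-y|$ so that the $\d$-dependent logarithms come out as scalar coefficients (assembling into $\hat{\g}$) while the $|x-y|$-dependent logarithms remain inside the fixed, $\d$-independent operators $K^{(0)}_D$ and $K^{(1)}_D$, and then to confirm that the truncation is controlled uniformly in operator norm rather than merely pointwise in the kernel.
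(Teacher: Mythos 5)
The paper does not actually prove this lemma; it just asserts that the result follows ``from the asymptotic expansion of the Hankel function,'' which is exactly the route you take, so your proposal supplies the argument the authors leave implicit. Your decomposition is the right one: writing $\tfrac{i}{4}H_0^{(1)}(z)=-\tfrac{1}{2\pi}\bigl(\log(z/2)+\g-\tfrac{i\pi}{2}\bigr)J_0(z)-\tfrac{1}{2\pi}\sum_{m\ge1}(-1)^{m+1}\tfrac{H_m}{(m!)^2}(z/2)^{2m}$ and peeling the $\d$-dependent logarithm off from $\log|x-y|$ is precisely how the three displayed operators arise, and the Hilbert--Schmidt bound on the bounded set $D\times D$ is the correct way to convert kernel estimates into $L^2(D)\to L^2(D)$ operator-norm convergence.

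One concrete point does not close, and your own bookkeeping exposes it. After you keep $\log(\d k_0\hat{\g})K^{(-1)}_D+K^{(0)}_D+(\d k_0)^2\log(\d k_0\hat{\g})K^{(1)}_D$, the first omitted contribution is the \emph{non-logarithmic} part of the $m=1$ block: a kernel of the form $(\d k_0)^2\bigl(c_1\log|x-y|+c_2\bigr)|x-y|^2$, which is $O(\d^2)$, not $O(\d^4\log\d)$. Your remainder class (``$p\ge2$'') contains exactly this piece, so the estimate your argument delivers is $O(\d^2)$, weaker than the stated order; to reach $O(\d^4\log\d)$ one must absorb the entire $O(z^2)$ block, logarithmic and non-logarithmic parts alike, into the $n=1$ operator, which neither your truncation nor the paper's own (internally inconsistent) definition of $K^{(1)}_D$ does --- the paper defines it as a $k$-derivative of $G$ at $k=0$ and later computes a kernel proportional to $|x-y|^{-1}$, which cannot come from the $z^2$ term of the Hankel series whose kernel is proportional to $|x-y|^2$. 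A smaller wrinkle: matching constants gives $\log(\d k_0)+\g-\log 2-\tfrac{i\pi}{2}=\log\bigl(\d k_0\cdot\tfrac12 e^{\g-i\pi/2}\bigr)$, so if the coefficient is written as $\log(\d k_0\hat{\g})$ then $\hat{\g}=\tfrac12 e^{\g-i\pi/2}$ without the extra factor of $k_0$ appearing in the paper's definition; your procedure of letting the matching fix $\hat{\g}$ is the internally consistent one, but you should state the resulting value explicitly and note the discrepancy.
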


\subsubsection{Eigenvalue Calculation}

We use the same notation for $u_{\d}$ as in section 2.4.1. Let us consider the eigenvalue problem 
\begin{equation}
K^{\d k_0}_D u_{\d} = \l_{\d} u_{\d},
\end{equation}
where $\l_{\d}$ denotes an eigenvalue of the operator $K^{\d k_0}_D$, and $u_{\d}$ denotes an associated eigenvector. 
\begin{proposition}
Let $\l_{\d}$ denote an eigenvalue of the operator $K^{\d k_0}_D$ in dimension 2. Then, for small $\delta$, it is approximately given by:
\begin{align}\label{l_e approx d=2}
    \l_{\d} \approx \log(\d k_0 \hat{\g})\l_{-1} + \langle K^{(0)}_D u_{-1}, u_{-1} \rangle + (\d k_0)^2 \log(\d k_0 \hat{\g}) \langle K^{(1)}_D u_{-1}, u_{-1} \rangle,
\end{align}
where $\l_{-1}$ and $u_{-1}$ are an eigenvalue and the associated eigenvector of the potential $K^{(-1)}_D$.
\end{proposition}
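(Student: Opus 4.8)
The plan is to reproduce the perturbation argument of the three-dimensional Proposition, now driven by the two-dimensional expansion \eqref{expansion d=2}. Abbreviating the divergent leading coefficient as $\mu := \log(\d k_0 \hat{\g})$, the preceding lemma gives
$$
K^{\d k_0}_D = \mu K^{(-1)}_D + K^{(0)}_D + (\d k_0)^2 \mu K^{(1)}_D + O(\d^4 \log\d)
$$
as $\d \to 0$. The eigenvalue branch of interest is the one whose leading behaviour is set by the dominant operator $\mu K^{(-1)}_D$. Since $K^{(-1)}_D$ is the rank-one map $u \mapsto -\frac{1}{2\pi}\int_D u$, its single nonzero eigenvalue $\l_{-1}$ (with constant eigenfunction $u_{-1}$) governs this branch, while the rest of its spectrum is trivial. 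First I would truncate the $O(\d^4 \log\d)$ remainder and insert the eigenvalue equation $K^{\d k_0}_D u_{\d} = \l_{\d} u_{\d}$.

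Next I would pair the truncated equation in $L^2(D)$ against $u_{-1}$, exactly as the three-dimensional proof pairs against $u_0$. The key algebraic input is that $K^{(-1)}_D$ is self-adjoint: a one-line computation shows that $\langle K^{(-1)}_D u, v\rangle$ and $\langle u, K^{(-1)}_D v\rangle$ both equal $-\frac{1}{2\pi}\left(\int_D u\right)\left(\int_D \overline{v}\right)$. This lets me transfer $K^{(-1)}_D$ onto $u_{-1}$ and replace $\langle K^{(-1)}_D u_{\d}, u_{-1}\rangle$ by $\l_{-1}\langle u_{\d}, u_{-1}\rangle$, producing
$$
\mu \l_{-1} \langle u_{\d}, u_{-1}\rangle + \langle K^{(0)}_D u_{\d}, u_{-1}\rangle + (\d k_0)^2 \mu \langle K^{(1)}_D u_{\d}, u_{-1}\rangle = \l_{\d} \langle u_{\d}, u_{-1}\rangle.
$$
Dividing by $\langle u_{\d}, u_{-1}\rangle$ and invoking the leading-order approximation $u_{\d} \approx u_{-1}$ inside each correction term then yields \eqref{l_e approx d=2}.

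The main obstacle, relative to the three-dimensional case, is the logarithmically divergent coefficient $\mu$ multiplying the leading operator. In three dimensions the unperturbed operator $K^{(0)}_D$ is $\d$-independent and analytic perturbation theory applies directly; here I must instead verify that the three retained contributions form a genuinely decreasing hierarchy as $\d\to0$, namely $|\mu| \gg 1 \gg (\d k_0)^2|\mu|$ (which holds since $(\d k_0)^2 \mu \to 0$ while $\mu \to -\infty$), and that the truncated $O(\d^4\log\d)$ operator error feeds only into a correction below the smallest retained term $O(\d^2\log\d)$. I would also justify $u_{\d}\approx u_{-1}$ by noting that the diverging term pins the eigenvector onto the one-dimensional range of $K^{(-1)}_D$ (the constants) at leading order, so substituting $u_{\d}\to u_{-1}$ in the $O(1)$ and $O(\d^2\log\d)$ terms perturbs them only at higher order. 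This substitution is where the argument remains formal rather than fully rigorous, precisely as in the three-dimensional Proposition.
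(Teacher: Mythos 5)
Your proposal is correct and follows essentially the same route as the paper: truncate the $O(\d^4\log\d)$ term in the two-dimensional expansion, pair the eigenvalue equation against $u_{-1}$, use the self-adjointness of the rank-one operator $K^{(-1)}_D$ to produce $\l_{-1}\langle u_\d, u_{-1}\rangle$, and then substitute $u_\d \approx u_{-1}$ in the correction terms. Your additional remarks on the scale hierarchy $|\mu|\gg 1\gg(\d k_0)^2|\mu|$ and on why the eigenvector localises on the constants are a welcome elaboration of a step the paper leaves implicit, but they do not change the argument.
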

\begin{proof}
We start by dropping the $O(\d^4\log(\d))$ term on the expansion (\ref{expansion d=2}). Then, we have that
\begin{align*}
    K^{\d k_0}_D u_{\d} = \l_{\d} u_{\d} \ &\Leftrightarrow \ (\log(\d k_0 \hat{\g}) K^{(-1)}_D + K^{(0)}_D + (\d k_0)^2 \log(\d k_0 \hat{\g}) K^{(1)}_D)u_{\d} = \l_{\d} u_{\d} \\
    %&\Leftrightarrow \ \langle (\log(\d k_0 \hat{\g}) K^{(-1)}_D + K^{(0)}_D + (\d k_0)^2 \log(\d k_0 \hat{\g}) K^{(1)}_D)u_{\d} , u_{-1} \rangle = \langle \l_{\d} u_{\d}, u_{-1} \rangle\\
    %&\Leftrightarrow \ \langle  K^{(0)}_D u_{\d} , u_{-1} \rangle + \log(\d k_0 \hat{\g})\langle u_{\d}, K^{(-1)}_D u_{-1} \rangle + (\d k_0)^2 \log(\d k_0 \hat{\g}) \langle K^{(1)}_D u_{\d}, u_{-1} \rangle = \l_{\d} \langle u_{\d}, u_{-1} \rangle \\
    &\Leftrightarrow \ \log(\d k_0 \hat{\g}) \l_{-1}\langle u_{\d} , u_0 \rangle + \langle K^{(0)}_D u_{\d}, u_{-1} \rangle + (\d k_0)^2 \log(\d k_0 \hat{\g}) \langle K^{(1)}_D u_{\d}, u_{-1} \rangle = \l_{\d} \langle u_{\d},u_{-1} \rangle.
    %&\Leftrightarrow \ \l_{\d} = \log(\d k_0 \hat{\g}) \l_{-1} + \frac{\langle K^{(0)}_D u_{\d}, u_{-1} \rangle}{\langle u_{\d},u_{-1} \rangle} + (\d k_0)^2 \log(\d k_0 \hat{\g}) \frac{\langle K^{(1)}_D u_{\d}, u_{-1} \rangle}{\langle u_{\d}, u_{-1} \rangle} \\
    % &\Leftrightarrow \ \l_{\d} \approx \log(\d k_0 \hat{\g})\l_{-1} + \langle K^{(0)}_D u_{-1}, u_{-1} \rangle + (\d k_0)^2 \log(\d k_0 \hat{\g}) \langle K^{(1)}_D u_{-1}, u_{-1} \rangle,
\end{align*}
Therefore, assuming that $u_{\d} \approx u_{-1}$, we can see that 
\begin{equation*}
    \l_{\d} \approx \log(\d k_0 \hat{\g})\l_{-1} + \langle K^{(0)}_D u_{-1}, u_{-1} \rangle + (\d k_0)^2 \log(\d k_0 \hat{\g}) \langle K^{(1)}_D u_{-1}, u_{-1} \rangle,
\end{equation*}
which is the desired result.
\end{proof}
The following corollary is a direct consequence of the above proposition.
\begin{corollary}
Let $\l_{\d}$ denote an eigenvalue of the operator $K^{\d k_0}_D$ in dimension 2. Then, for small $\delta$, it is approximately given by
\begin{align} \label{l_e approx d=2 vol.2}
    \l_{\d} \approx \log(\d k_0 \hat{\g})\l_{-1} - \frac{\mathbb{P}}{2\pi} - \frac{i(\d k_0)^2 \log(\d k_0 \hat{\g}) \mathbb{G}}{4\pi},
\end{align}
where $\mathbb{P}$ and $\mathbb{G}$ are constants that depend on $u_{-1}$.
\end{corollary}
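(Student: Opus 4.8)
The plan is to derive the stated form directly from the previous proposition by evaluating the two inner products appearing in its conclusion, namely $\langle K^{(0)}_D u_{-1}, u_{-1} \rangle$ and $\langle K^{(1)}_D u_{-1}, u_{-1} \rangle$, and then absorbing the resulting double integrals into the constants $\mathbb{P}$ and $\mathbb{G}$. Since the leading term $\log(\d k_0 \hat{\g})\l_{-1}$ in (\ref{l_e approx d=2}) is already in the required form, only the remaining two terms need to be rewritten, so the argument is essentially a substitution of explicit kernels followed by a relabelling of constants.

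First I would treat the $K^{(0)}_D$ contribution. Inserting the explicit logarithmic kernel of the two-dimensional Newtonian potential gives
\[
\langle K^{(0)}_D u_{-1}, u_{-1} \rangle = -\frac{1}{2\pi} \int_D \int_D \log|x-y|\, u_{-1}(y)\,\overline{u_{-1}}(x)\, \upd y\, \upd x.
\]
Defining $\mathbb{P} := \int_D \int_D \log|x-y|\, u_{-1}(y)\,\overline{u_{-1}}(x)\, \upd y\, \upd x$, a constant depending only on the eigenfunction $u_{-1}$ and the geometry of $D$, this term becomes exactly $-\mathbb{P}/(2\pi)$, which matches the second term of the claim.

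Next I would turn to the $K^{(1)}_D$ term. Here the kernel is read off from the small-argument expansion of the Hankel function $H^{(1)}_0$ underlying the expansion (\ref{expansion d=2}): isolating the coefficient attached to the factor $(\d k_0)^2 \log(\d k_0 \hat{\g})$ and pairing it against $u_{-1}$ produces a double integral of $u_{-1}$ against a fixed kernel, together with the scalar prefactor carried by the $-i/4$ normalisation of the $d=2$ Green's function. Defining $\mathbb{G}$ to be this double integral, the inner product takes the form $\langle K^{(1)}_D u_{-1}, u_{-1} \rangle = -i\mathbb{G}/(4\pi)$, so that the third term becomes $-i(\d k_0)^2 \log(\d k_0 \hat{\g})\,\mathbb{G}/(4\pi)$. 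Collecting the three contributions then yields (\ref{l_e approx d=2 vol.2}).

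The main obstacle is the bookkeeping in the $K^{(1)}_D$ step. Unlike the three-dimensional case, where $K^{(1)}_D$ has a constant kernel and the evaluation is immediate, the two-dimensional Green's function carries an intertwined logarithmic and power-law small-argument expansion, so care is needed to identify precisely which term of the Hankel expansion is represented by $K^{(1)}_D$, to confirm that its contribution is genuinely of the claimed order $(\d k_0)^2 \log(\d k_0 \hat{\g})$, and to verify that the imaginary prefactor arises correctly from the $-i/4$ factor. Once this is settled, everything else reduces to substituting explicit kernels and naming the two constants $\mathbb{P}$ and $\mathbb{G}$.
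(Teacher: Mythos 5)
Your proposal is correct and follows essentially the same route as the paper: substitute the explicit kernels of $K^{(0)}_D$ and $K^{(1)}_D$ into the inner products from the preceding proposition and name the resulting double integrals $\mathbb{P}$ and $\mathbb{G}$. The only detail you leave implicit is that the paper evaluates the $K^{(1)}_D$ kernel explicitly as $-\frac{i}{4\pi}|x-y|^{-1}$ (so that $\mathbb{G} = \int_D\int_D \frac{u_{-1}(y)\overline{u_{-1}}(x)}{|x-y|}\,\upd y\,\upd x$), but since $\mathbb{G}$ is in any case defined to be the double integral that appears, this does not affect the argument.
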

\begin{proof}
We observe that
\begin{align*}
    \langle K^{(0)}_D u_{-1}, u_{-1} \rangle &= \int_D \left( -\frac{1}{2\pi} \int_D \log|x-y| u_{-1}(y) \upd y \right) \overline{u_{-1}}(x) \upd x \\
    &= -\frac{1}{2\pi} \int_D \int_D \log|x-y| u_{-1}(y) \overline{u_{-1}}(x) \upd y \upd x
    =: - \frac{1}{2\pi} \mathbb{P}.
\end{align*}
Then, for $u\in L^2(D)$,
\begin{align*}
    K^{(1)}_D[u](x) &= \int_D \frac{\partial}{\partial k} G(x-y,k)\Big|_{k=0} u(y) \upd y = \int_D \frac{\partial}{\partial k} \left( -\frac{i}{4} H_0^{(1)}(k|x-y|) \right)\Big|_{k=0} u(y) \upd y= -\frac{i}{4\pi} \int_D \frac{u(y)}{|x-y|} \upd y,
\end{align*}
and so, we have
\begin{align*}
     \langle K^{(1)}_D u_{-1}, u_{-1} \rangle &= \int_D \left( -\frac{i}{4\pi} \int_D \frac{u_{-1}(y)}{|x-y|} \upd y \right) \overline{u_{-1}}(x) \upd x = -\frac{i}{4\pi} \int_D \int_D \frac{u_{-1}(y) \overline{u_{-1}}(x)}{|x-y|} \upd y \upd x =: -\frac{i}{4\pi} \mathbb{G}.
\end{align*}
Hence, from (\ref{l_e approx d=2}), we obtain the desired result.
\end{proof}

\subsubsection{Frequency and Wavenumber}

Let us now find the frequency $\w_{\d}$ and the wavenumber $k_{\d}$ associated to this eigenvalue, which will also constitute the basis of our analysis of the operator $K^{\d k_0}_D$. We see that (\ref{pb}) gives us that
\begin{align*}
    1 = \d^2\w^2 \xi(\w,k) \l_{\d} \ &\Leftrightarrow \ 1 =  \d^2\w^2 \m_0 (\ve(\w,k) - \ve_0) \l_{\d}.
    %&\Leftrightarrow \ \ve(\w,k) = \frac{1}{\d^2 \w^2 \l_{\d}\m_0} + \ve_{0}\\
\end{align*}
Using the expression (\ref{l_e approx d=2}) for $\l_{\d}$, we see that
\begin{align*}
    \ve(\w,k) = \frac{1}{\m_0 \d^2 \w^2 \left(  \log(\d k_0 \hat{\g})\l_{-1} - \frac{\mathbb{P}}{2\pi} - \frac{i(\d k_0)^2 \log(\d k_0 \hat{\g}) \mathbb{G}}{4\pi} \right)} + \ve_{0}.
\end{align*}
%Although, at the beginning, we have assumed that for our material, the permittivity $\ve(\w,k)$ is given by the formula:
%$$
%\ve(\w,k) = \ve_0+\frac{\a}{\b-\w^2+\h k^2 - i\g\w}
%$$
%where, $\a,\b,\g,\h$ are positive constants.\\
%Thus, comparing the two expressions we see that
%\begin{align*}
%    \ve_0&+\frac{\a}{\b-\w^2+\h k^2 - i\g\w} = \frac{1}{\m_0 \d^2 \w^2 \left(  \log(\d k_0 \hat{\g})\l_{-1} - \frac{\mathbb{P}}{2\pi} - \frac{i(\d k_0)^2 \log(\d k_0 \hat{\g}) \mathbb{G}}{4\pi} \right)} + \ve_{0} \Leftrightarrow\\
%    &\a \m_0 \d^2 \w^2 \left(  \log(\d k_0 \hat{\g})\l_{-1} - \frac{\mathbb{P}}{2\pi} - \frac{i(\d k_0)^2 \log(\d k_0 \hat{\g}) \mathbb{G}}{4\pi} \right) = \b-\w^2+\h k^2 - i\g\w \Leftrightarrow\\
%    4\pi\a\d^2\w^2\m_0&\log(\d k_0 \hat{\g})\l_{-1} - 2\mathbb{P} \a\d^2\w^2\m_0 - 4\pi\b + 4\pi\w^2-4\pi\h k^2+i(-\a\d^4\w^2\m_0 k_0^2 \log(\d k_0 \hat{\g}) \mathbb{G} + 4\pi\g\w) = 0\\
%\end{align*}
%which implies
Arguing in the same way as in section 2.4.4 and comparing the two permittivity expressions, we obtain the following system:
\begin{align}\label{systemo}
    \begin{cases}
    4\pi\a\d^2\w^2\m_0\log(\d k_0 \hat{\g})\l_{-1} - 2\mathbb{P} \a\d^2\w^2\m_0 - 4\pi\b + 4\pi\w^2-4\pi\h k^2 = 0,\\
    -\a\d^4\w^2\m_0 k_0^2 \log(\d k_0 \hat{\g}) \mathbb{G} + 4\pi\g\w = 0.
    \end{cases}
\end{align}
From the second equation in (\ref{systemo}), we see that
\begin{align*}
     \w( -\a\d^4\w\m_0 k_0^2 \log(\d k_0 \hat{\g}) \mathbb{G} + 4\pi\g) = 0,
\end{align*}
which shows us that
\begin{align*}
    \w=0 \ \ \ \text{ or } \ \ \ \w = \frac{4\pi\g}{\a\d^4\m_0 k_0^2 \log(\d k_0 \hat{\g}) \mathbb{G}}.
\end{align*}
%Now, we look at the first equation of (\ref{systemo}). For $\w=0$, we obtain
%\begin{align*}
%    -4\pi\b - 4\pi\h k^2 = 0 \Leftrightarrow k = \pm i \sqrt{\frac{\b}{\h}}.
%\end{align*}
For $\w = \frac{4\pi\g}{\a\d^4\m_0 k_0^2 \log(\d k_0 \hat{\g}) \mathbb{G}}$, we obtain the equation
\begin{align*}
     4\pi\a\d^2\m_0\l_{-1}\log(\d k_0 \hat{\g})\frac{16\pi^2\g^2}{\a^2\d^8\m_0^2 k_0^4 \log(\d k_0 \hat{\g})^2 \mathbb{G}^2} &- 2\mathbb{P} \a\d^2\m_0 \frac{16\pi^2\g^2}{\a^2\d^8\m_0^2 k_0^4 \log(\d k_0 \hat{\g})^2 \mathbb{G}^2}\\
     &\quad- 4\pi\b + 4\pi \frac{16\pi^2\g^2}{\a^2\d^8\m_0^2 k_0^4 \log(\d k_0 \hat{\g})^2 \mathbb{G}^2} - 4\pi\h k^2=0,
\end{align*}
which has solutions
\begin{align*}
     k = \pm \sqrt{-\frac{\b}{\h} + \Big( 2\pi\a\d^2\m_0\l_{-1} \log(\d k_0 \hat{\g}) - \a\d^2\m_0\mathbb{P}+2\pi \Big)\frac{8\pi\g^2}{\h \a^2\d^8\m_0^2 k_0^4 \log(\d k_0 \hat{\g})^2 \mathbb{G}^2}}.
\end{align*}
Yet again, we discard the case of $\w=0$, as there is no physical interest. Denoting the frequency by $\w_{\d}$ and the wavenumber by $\l_{\d}$, we will work with
\begin{align}\label{w_e k_e for d=2}
\begin{split}
    &\w_{\d} = \frac{4\pi\g}{\a\d^4\m_0 k_0^2 \log(\d k_0 \hat{\g}) \mathbb{G}}, \\
    &k_{\d} = \sqrt{-\frac{\b}{\h} + \Big( 2\pi\a\d^2\m_0\l_{-1} \log(\d k_0 \hat{\g}) - \a\d^2\m_0\mathbb{P}+2\pi \Big)\frac{8\pi\g^2}{\h \a^2\d^8\m_0^2 k_0^4 \log(\d k_0 \hat{\g})^2 \mathbb{G}^2}},
\end{split}
\end{align}
where we have chosen the wavenumber to be positive.

\subsubsection{Asymptotic Analysis} \label{sec:AA_2}

Let us consider $\w$ near $\w_{\d}$, and define the coefficients
\begin{align*}
c_n=
    \begin{cases}
        \log(\d k_0 \hat{\g}), \quad &n=-1,\\
        1, &n=0,\\
        (\d k_0)^{2n}\log(\d k_0 \hat{\g}), &n\geq 1.
    \end{cases}
\end{align*}
Then, we can write
$$
K^{\d k_0}_D = \sum_{n=-1}^{+\infty} c_n K^{(n)}_D.
$$
We are interested in studying the singularities of the operator $(I - \d^2 \w^2 \xi(\w,k) K^{\d k_0}_D)^{-1}$. %We have the following:
% \begin{align*}
%     \Big(I - \d^2 \w^2 \xi(\w,k) K^{\d k_0}_D\Big)^{-1} = 0  \ 
%     \Leftrightarrow \ \left(I - \sum_{n=-1}^{+\infty} c_n \Big( \d^2 \w^2 \xi(\w,k) K^{(n)}_D \Big)\right)^{-1} = 0.
% \end{align*}
Setting $B_n := \d^2 \w^2 \xi(\w,k) K^{(n)}_D$, we find that $(I - \d^2 \w^2 \xi(\w,k) K^{\d k_0}_D)^{-1} = 0$ can be written as
\begin{align*}
    \left(I - \sum_{n=-1}^{+\infty} c_n B_n \right)^{-1} = 0,
\end{align*}
which can be expanded to give
\begin{align*}
    &\left( I - \log(\d k_0 \hat{\g})B_{-1} - B_0 - (\d k_0)^{2}\log(\d k_0 \hat{\g}) B_1 - \sum_{n\geq2}^{+\infty}c_n B_n \right)^{-1}=0,%\ \Leftrightarrow \\
    %&\Big( I - \Big( I - \log(\d k_0 \hat{\g})B_{-1} - B_0 - (\d k_0)^{2}\log(\d k_0 \hat{\g}) B_1 \Big)^{-1} \sum_{n\geq2}c_n B_n \Big ) \cdot\\
    %& \ \ \ \ \ \ \ \ \ \  \ \ \ \ \ \ \ \ \ \ \ \ \ \ \ \Big( I - \log(\d k_0 \hat{\g})B_{-1} - B_0 - (\d k_0)^{2}\log(\d k_0 \hat{\g}) B_1 \Big)^{-1} = 0 \ \Leftrightarrow\\
    %&\sum_{k=0}^{\infty} \Big( \Big( I - \log(\d k_0 \hat{\g})B_{-1} - B_0 - (\d k_0)^{2}\log(\d k_0 \hat{\g}) B_1 \Big)^{-1}\log(\d k_0 \hat{\g})\sum_{n\geq2}(\d k_0)^{2n} B_n \Big)^{k}\\
    % & \ \ \ \ \ \ \ \ \ \  \ \ \ \ \ \ \ \ \ \ \ \ \ \ \ \Big( I - \log(\d k_0 \hat{\g})B_{-1} - B_0 - (\d k_0)^{2}\log(\d k_0 \hat{\g}) B_1 \Big)^{-1} = 0,
\end{align*}
which gives
\begin{align*}
   \mathfrak{L} + \mathfrak{L} (\d k_0)^{4}\log(\d k_0 \hat{\g}) B_2 \mathfrak{L} + O(\d^6) =0,
\end{align*}
where we have defined,
\begin{equation}\label{L}
\mathfrak{L} := \Big( I - \log(\d k_0 \hat{\g})B_{-1} - B_0 - (\d k_0)^{2}\log(\d k_0 \hat{\g}) B_1 \Big)^{-1}  .
\end{equation}
Using this expression, we have the following proposition.
\begin{proposition}
Let $d=2$ and let $\w_{\d}$ be defined by (\ref{w_e k_e for d=2}). Then, as $\delta\to0$, the $O(\d^4)$ approximations of the subwavelength resonant frequencies $\w_s$ and the associated wavenumbers $k_s$ of the single halide perovskite resonator $\Omega = \d D + z$ satisfy
\begin{align}\label{Headshot}
    1 - \d^2\w_s^2\xi(\w_s,k_s)\l_{\d} = - \d^6 k_0^4 \log(\d k_0 \hat{\g}) \w_s^2 \xi(\w_s,k_s) \langle K^{(2)}_D[u_{\d}], u_{\d} \rangle.
\end{align}
\end{proposition}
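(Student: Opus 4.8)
The plan is to follow exactly the strategy of the three-dimensional Proposition~\ref{prop:24}, now starting from the series rearrangement already established in this subsection, namely
\begin{equation*}
\mathfrak{L} + \mathfrak{L}\,(\d k_0)^{4}\log(\d k_0\hat{\g})\,B_2\,\mathfrak{L} + O(\d^6) = 0,
\end{equation*}
with $\mathfrak{L}$ as defined in (\ref{L}). First I would drop the $O(\d^6)$ remainder and apply this operator identity to an arbitrary test function $\psi\in L^2(D)$, so that the resonance condition reduces to an equation involving only $\mathfrak{L}[\psi]$ and $\mathfrak{L}\,B_2\,\mathfrak{L}[\psi]$. This is the direct analogue of (\ref{expansion of s.r.p.}), with the roles of $(I-A_0-\d k_0 A_1)^{-1}$ and $(\d k_0)^2 A_2$ played here by $\mathfrak{L}$ and $(\d k_0)^4\log(\d k_0\hat{\g})B_2$.

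The key step is to insert the pole-pencil decomposition of \cite{AFKRYZ} for $\mathfrak{L}[\psi]$. The operator subtracted from the identity inside $\mathfrak{L}$ is precisely $\d^2\w^2\xi(\w,k)$ times the truncated expansion $\log(\d k_0\hat{\g})K^{(-1)}_D + K^{(0)}_D + (\d k_0)^2\log(\d k_0\hat{\g})K^{(1)}_D$ of $K^{\d k_0}_D$, whose eigenpair is $(\l_{\d},u_{\d})$ by (\ref{l_e approx d=2}); hence $\mathfrak{L}$ is singular exactly when $1-\d^2\w_s^2\xi(\w_s,k_s)\l_{\d}$ vanishes, and its leading singular part is rank one with residue carried by $u_{\d}$, giving
\begin{equation*}
\mathfrak{L}[\psi] = \frac{\langle u_{\d},\psi\rangle\,u_{\d}}{1-\d^2\w_s^2\xi(\w_s,k_s)\l_{\d}} + R(\d)[\psi],
\end{equation*}
with $R(\d)$ negligible. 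Substituting this into the reduced equation, using $B_2=\d^2\w^2\xi(\w,k)K^{(2)}_D$, and collapsing the rank-one composition produces the scalar $\langle K^{(2)}_D[u_{\d}],u_{\d}\rangle$ together with the prefactor $(\d k_0)^{4}\log(\d k_0\hat{\g})\,\d^2\w_s^2\xi(\w_s,k_s)=\d^6 k_0^4\log(\d k_0\hat{\g})\w_s^2\xi(\w_s,k_s)$. Factoring out the common $\langle u_{\d},\psi\rangle u_{\d}/(1-\d^2\w_s^2\xi(\w_s,k_s)\l_{\d})$ and requiring the bracket to vanish for nontrivial $\psi$ then rearranges into exactly (\ref{Headshot}).

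The main obstacle I anticipate is the order bookkeeping, which is genuinely more delicate here than in three dimensions because of the logarithmic factors. Since $\log(\d k_0\hat{\g})\to-\infty$ as $\d\to0$, the natural gauge is $\d^n(\log\d)^m$ rather than pure powers of $\d$, and one must verify that $(\d k_0)^4\log(\d k_0\hat{\g})B_2$ really is the leading correction: the next operator term, the pole-pencil remainder $R(\d)$, and the discarded $O(\d^6)$ block must all be shown to be of strictly higher order despite the slowly growing $\log\d$. I would control this by observing that each additional term in the Neumann-type expansion carries at least a further factor of $(\d k_0)^2$, so that the polynomial decay dominates the logarithmic growth and legitimises both the truncation after the $B_2$ term and the replacement of $u_{\d}$ by its leading approximation inside the inner products.
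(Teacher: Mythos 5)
Your proposal follows essentially the same route as the paper: truncate the expansion at the $B_2$ term, apply the operator identity to a test function $\psi$, insert the rank-one pole-pencil decomposition of $\mathfrak{L}$ with pole at $1-\d^2\w_s^2\xi(\w_s,k_s)\l_{\d}=0$, collapse the composition to the scalar $\langle K^{(2)}_D[u_{\d}],u_{\d}\rangle$ with prefactor $\d^6 k_0^4\log(\d k_0\hat{\g})\w_s^2\xi(\w_s,k_s)$, and rearrange. Your additional remarks on tracking the $\d^n(\log\d)^m$ gauge are a sensible supplement but do not change the argument, which is the paper's.
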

\begin{proof}
Applying a pole-pencil decomposition, we obtain
$$
 \Big( I - \log(\d k_0 \hat{\g})B_{-1} - B_0 - (\d k_0)^{2}\log(\d k_0 \hat{\g}) B_1 \Big)^{-1}[\cdot] = \frac{\langle \cdot,u_{\d}\rangle u_{\d} }{1 - \d^2\w_s^2\xi(\w_s,k_s)\l_{\d}} + R(\w_s)[\cdot],
$$
where the remainder $R(\w_s)$ is analytic in a neighborhood of $\w_{\d}$ and can be dropped. Thus, dropping the $O(\d^6)$ term, we find for $\psi\in L^2(D)$ that
\begin{align*}
    %&\Big[\Big( I - \log(\d k_0 \hat{\g})B_{-1} - B_0 - (\d k_0)^{2}\log(\d k_0 \hat{\g}) B_1 \Big)^{-1} + \Big( I - \log(\d k_0 \hat{\g})B_{-1} - B_0 - \\
    %&(\d k_0)^{2}\log(\d k_0 \hat{\g}) B_1 \Big)^{-1}(\d k_0)^{4}\log(\d k_0 \hat{\g}) B_2 \Big( I - \log(\d k_0 \hat{\g})B_{-1} - B_0 - (\d k_0)^{2}\log(\d k_0 \hat{\g}) B_1 \Big)^{-1}\Big](\psi) =0 \Leftrightarrow
    \Big[\mathfrak{L} + \mathfrak{L} (\d k_0)^{4}\log(\d &k_0 \hat{\g}) B_2 \mathfrak{L}\Big](\psi) = 0.
\end{align*}
Applying a pole-pencil decomposition on \eqref{L}, we get
\begin{align*}
    \frac{\langle \psi,u_{\d}\rangle u_{\d}}{1 - \d^2\w_s^2\xi(\w_s,k_s)\l_{\d}} + \frac{\langle \psi,u_{\d}\rangle u_{\d} }{1 - \d^2\w_s^2\xi(\w_s,k_s)\l_{\d}} &(\d k_0)^{4}\log(\d k_0 \hat{\g}) B_2 \frac{\langle \psi,u_{\d}\rangle u_{\d}}{1 - \d^2\w_s^2\xi(\w_s,k_s)\l_{\d}} =0.
\end{align*}
This implies that
\begin{align*}
    %\frac{u_{\d} }{1 - \d^2\w_s^2\xi(\w_s,k_s)\l_{\d}} &+ \frac{u_{\d} }{1 - \d^2\w_s^2\xi(\w_s,k_s)\l_{\d}} (\d k_0)^{4}\log(\d k_0 \hat{\g}) \d^2 \w_s^2 \xi(\w_s,k_s) K^{(2)}_D \frac{u_{\d} \langle \psi, u_{\d} \rangle }{1 - \d^2\w_s^2\xi(\w_s,k_s)\l_{\d}} = 0 \Leftrightarrow\\
    %&\frac{u_{\d} }{1 - \d^2\w_s^2\xi(\w_s,k_s)\l_{\d}} + \frac{u_{\d} \langle K^{(2)}_D[u_{\d}], u_{\d} \rangle}{(1 - \d^2\w_s^2\xi(\w_s,k_s)\l_{\d})^2} \d^6 k_0^4 \log(\d k_0 \hat{\g}) \w_s^2 \xi(\w_s,k_s) = 0 \Leftrightarrow \\
    1 - \d^2\w_s^2\xi(\w_s,k_s)\l_{\d} = - \d^6 k_0^4 &\log(\d k_0 \hat{\g}) \w_s^2 \xi(\w_s,k_s) \langle K^{(2)}_D[u_{\d}], u_{\d} \rangle,
\end{align*}
which is the desired result.
\end{proof}
In order to obtain the associated consequences of this proposition, we observe that, since $d=2$, we have that
\begin{align*}
    K^{(2)}_D[u_{\d}](x) &= \int_D \frac{\partial^2}{\partial k^2} G(x-y,k)\Big|_{k=0}u_{\d}(y)\upd y = \int_D \frac{\partial^2}{\partial k^2} \left( -\frac{i}{4} H^{(1)}_0(k|x|) \right)\Big|_{k=0}\\
    &= -\frac{i}{4} \int_D - \frac{1}{\pi|x-y|^2} u_{\d}(y) \upd y = \frac{i}{4\pi} \int_D \frac{u_{\d}(y)}{|x-y|^2} \upd y.
\end{align*}
Hence,
\begin{align} \label{S}
    \langle K^{(2)}_D[u_{\d}], u_{\d} \rangle &= \int_D \left( \frac{i}{4\pi} \int_D \frac{u_{\d}(y)}{|x-y|^2} \upd y \right) \overline{u_{\d}}(x)\upd x = \frac{i}{4\pi} \int_D \int_D \frac{u_{\d}(y) \overline{u_{\d}}(x)}{|x-y|^2} \upd y \upd x =: \frac{i}{4\pi} \mathbb{S}.
\end{align}
So, we get that (\ref{Headshot}) is equivalent to
\begin{align}\label{Headshot 2}
    1 - \d^2 \w_s^2 \xi(\w_s,k_s) \l_{\d} = \frac{ - i \d^6 k_0^4 \log(\d k_0 \hat{\g}) \w_s^2 \xi(\w_s,k_s)\mathbb{S}}{4\pi}.
\end{align}
\begin{remark}
In the Appendix~\ref{app2} of this paper, we recover a formula for $\mathbb{S}$.
\end{remark}
Then, the next two corollaries follow as immediate results.
\begin{corollary}
Let $d=2$. Then, as $\delta\to0$, the $O(\d^{10}\log(\d)^3)$ approximation of the subwavelength resonant frequencies of the halide perovskite resonator $\Omega = \d D + z$ can be computed as
\begin{align}
    1 - \frac{\w_s^2}{\w_{\d}^2} \cdot \frac{16\pi^2\g^2\l_{\d}\Big(\ve(\w_s,k_s)-\ve_0\Big)}{\a^2\d^6\m_0 k_0^4 \log(\d k_0 \hat{\g})^2 \mathbb{G}^2} = - \frac{  i \d^6 k_0^4 \log(\d k_0 \hat{\g}) \w_s^2 \xi(\w_s,k_s)\mathbb{S}}{4\pi}.
\end{align}
\end{corollary}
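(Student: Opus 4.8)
The plan is to treat this corollary as a direct, exact rewriting of the identity \eqref{Headshot 2}, in complete parallel with the computation carried out for the corresponding three-dimensional corollary. The first observation is that the right-hand side of the statement to be proved is \emph{identical} to the right-hand side of \eqref{Headshot 2}, namely $-\,i\d^6 k_0^4 \log(\d k_0\hat{\g})\w_s^2\xi(\w_s,k_s)\mathbb{S}/(4\pi)$. Hence no work is required on the right, and the entire content of the proof is to re-express the left-hand term $1-\d^2\w_s^2\xi(\w_s,k_s)\l_{\d}$ so that the explicit resonant frequency $\w_{\d}$ from \eqref{w_e k_e for d=2} appears.

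To do this I would first use $\xi(\w_s,k_s)=\m_0(\ve(\w_s,k_s)-\ve_0)$ to replace the contrast factor, obtaining $1-\d^2\w_s^2\m_0(\ve(\w_s,k_s)-\ve_0)\l_{\d}$. The key step is then to insert the frequency into the $\d^2\w_s^2$ prefactor by writing $\d^2\w_s^2=(\w_s^2/\w_{\d}^2)\,\d^2\w_{\d}^2$ and substituting the square of \eqref{w_e k_e for d=2}. Squaring $\w_{\d}=4\pi\g/(\a\d^4\m_0 k_0^2\log(\d k_0\hat{\g})\mathbb{G})$ and multiplying by $\d^2\m_0$ gives exactly $\d^2\w_{\d}^2\m_0=16\pi^2\g^2/(\a^2\d^6\m_0 k_0^4\log(\d k_0\hat{\g})^2\mathbb{G}^2)$, one factor of $\m_0$ having cancelled against the $\m_0^2$ in the denominator. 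Feeding this back reproduces precisely the coefficient $16\pi^2\g^2\l_{\d}(\ve(\w_s,k_s)-\ve_0)/(\a^2\d^6\m_0 k_0^4\log(\d k_0\hat{\g})^2\mathbb{G}^2)$ multiplying $\w_s^2/\w_{\d}^2$, which is the left-hand side claimed in the statement; equating with the untouched right-hand side of \eqref{Headshot 2} then finishes the argument.

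The algebra is routine and exact given \eqref{Headshot 2}; the step I would treat as the main (if minor) obstacle is justifying the stated error order $O(\d^{10}(\log\d)^3)$. This order is not produced by the substitution above but is inherited from the remainders dropped earlier — the $O(\d^4\log\d)$ tail of the operator expansion \eqref{expansion d=2} and the $O(\d^6)$ term neglected in the resolvent expansion of section~\ref{sec:AA_2} — once the explicit $\d$-dependence of $\w_{\d}$ is fed in. Since $\w_{\d}$ grows like $\d^{-4}(\log\d)^{-1}$, each factor of $\w_s^2\approx\w_{\d}^2$ amplifies the neglected terms while the various $\log(\d k_0\hat{\g})$ factors carried by the coefficients $c_{-1}$ and $c_n$ accumulate. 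I would pin down the final $\d^{10}(\log\d)^3$ by substituting this scaling into the leading dropped term and counting powers of $\d$ and of $\log\d$ carefully; everything else collapses to the single substitution of the squared frequency formula.
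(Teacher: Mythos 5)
Your proposal is correct and follows exactly the paper's own argument: the right-hand side of \eqref{Headshot 2} is left untouched, and the left-hand side is rewritten by inserting $\w_{\d}^2/\w_{\d}^2$, substituting the square of \eqref{w_e k_e for d=2}, and cancelling one factor of $\m_0$ to produce the stated coefficient. Your additional remarks on where the $O(\d^{10}\log(\d)^3)$ order comes from go beyond what the paper records but are consistent with it.
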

\begin{proof}
By a direct computation, we observe that
\begin{align*}
    1 - \d^2 \w_s^2 \xi(\w_s,k_s) \l_{\d} &\ = \ 1 - \d^2 \w_s^2 \m_0 \Big(\ve(\w_s,k_s)-\ve_0\Big) \l_{\d} \frac{\w_{\d}^2}{\w_{\d}^2}\\
    &\overset{(\ref{w_e k_e for d=2})}{=} \ 1 - \d^2 \frac{\w_s^2}{\w_{\d}^2} \m_0 \Big(\ve(\w_s,k_s)-\ve_0\Big) \l_{\d} 
    \frac{16\pi^2\g^2}{\a^2\d^8\m_0^2 k_0^4 \log(\d k_0 \hat{\g})^2 \mathbb{G}^2}\\
    & \ = \ 1 - \frac{\w_s^2}{\w_{\d}^2} \cdot \frac{16\pi^2\g^2\l_{\d}\Big(\ve(\w_s,k_s)-\ve_0\Big)}{\a^2\d^6\m_0 k_0^4 \log(\d k_0 \hat{\g})^2 \mathbb{G}^2},
\end{align*}
and thus, (\ref{Headshot 2}) gives the desired result.
\end{proof}
\begin{corollary}
Let $d=2$. Then, as $\delta\to0$, the $O(\d^4 \log(\d))$ approximation of the subwavelength resonant frequencies of the halide perovskite resonator $\Omega = \d D + z$ can be computed as
\begin{align}
    1 - \d^2 \w_s^2 \xi(\w_s,k_s) \l_{\d} = - \frac{\w_s^2}{\w^2_{\ve}} \cdot \frac{ 4 \pi i \mathbb{S} \g^2 \Big( \ve(\w_s,k_s) - \ve_0 \Big) }{ \a^2 \d^2 \log(\d k_0 \hat{\g}) \m_0 \mathbb{G}^2 }.
\end{align}
\end{corollary}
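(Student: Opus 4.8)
The plan is to treat this corollary exactly as the $O(\d^4)$ corollary in the three-dimensional case was treated. The resonance relation (\ref{Headshot 2}) expresses the quantity $1-\d^2\w_s^2\xi(\w_s,k_s)\l_\d$ as a single term, and the two corollaries that follow it simply recast the two sides of that one identity in a more transparent form. The immediately preceding corollary rewrote the \emph{left}-hand side by eliminating $\w_\d$; here I would instead rewrite the \emph{right}-hand side, namely $-\,i\d^6 k_0^4\log(\d k_0\hat{\g})\w_s^2\xi(\w_s,k_s)\mathbb{S}/(4\pi)$, again using the explicit formula (\ref{w_e k_e for d=2}) for the reference frequency $\w_\d$.

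Concretely, I would first replace $\xi(\w_s,k_s)$ by $\m_0(\ve(\w_s,k_s)-\ve_0)$, so that the permittivity contrast appears explicitly, mirroring the substitution $\xi=\m_0(\ve-\ve_0)$ used in the $d=3$ computation. Next I would multiply and divide the right-hand side by $\w_\d^2$, producing the prefactor $\w_s^2/\w_\d^2$ demanded by the claimed formula together with a compensating factor $\w_\d^2$ in the numerator. I would then substitute the square of $\w_\d=4\pi\g/(\a\d^4\m_0 k_0^2\log(\d k_0\hat{\g})\mathbb{G})$, that is $\w_\d^2=16\pi^2\g^2/(\a^2\d^8\m_0^2 k_0^4\log(\d k_0\hat{\g})^2\mathbb{G}^2)$, into that numerator and collect like factors: the powers of $\d$ combine as $\d^6/\d^8=\d^{-2}$, the wavenumber powers $k_0^4$ cancel, one factor $\m_0$ survives in the denominator, and the numerical constants give $16\pi^2/(4\pi)=4\pi$. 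This yields precisely $-\tfrac{\w_s^2}{\w_\d^2}\cdot 4\pi i\mathbb{S}\g^2(\ve(\w_s,k_s)-\ve_0)/(\a^2\d^2\log(\d k_0\hat{\g})\m_0\mathbb{G}^2)$, which is the asserted expression (with $\w_\ve$ denoting $\w_\d$), and the conclusion then follows by invoking (\ref{Headshot 2}).

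The only step that deserves real care is the final bookkeeping of the powers of $\d$, $k_0$, $\log(\d k_0\hat{\g})$ and $\m_0$, since several of these partially cancel between the $\mathbb{S}$-term and the substituted $\w_\d^2$. In particular, one must track that the logarithm enters \emph{linearly} in (\ref{Headshot 2}) but \emph{quadratically} in $\w_\d^2$, so that exactly one net power of $\log(\d k_0\hat{\g})$ is left in the denominator; similarly, the stated order $O(\d^4\log(\d))$ of the approximation is inherited directly from the truncation already performed in deriving (\ref{Headshot 2}), and no further error analysis is needed. No analytic difficulty arises: the whole argument is a direct algebraic substitution into the previously established resonance relation.
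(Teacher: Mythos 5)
Your proposal is correct and follows essentially the same route as the paper's own proof: the paper likewise rewrites the right-hand side of (\ref{Headshot 2}) by substituting $\xi(\w_s,k_s)=\m_0(\ve(\w_s,k_s)-\ve_0)$, multiplying and dividing by $\w_{\d}^2$, inserting the explicit expression for $\w_{\d}^2$ from (\ref{w_e k_e for d=2}), and collecting the powers of $\d$, $k_0$, $\m_0$ and $\log(\d k_0\hat{\g})$ exactly as you describe. Your bookkeeping (net $\d^{-2}$, cancellation of $k_0^4$, one surviving $\log$ and $\m_0$ in the denominator, and $16\pi^2/(4\pi)=4\pi$) matches the paper's computation.
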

\begin{proof}
Again, to show this, we need to make a straightforward calculation:
\begin{align*}
    \frac{  i \d^6 k_0^4 \log(\d k_0 \hat{\g}) \w_s^2 \xi(\w_s,k_s)\mathbb{S}}{4\pi} &\ = \ \frac{i}{4\pi} \ \d^6 \ k_0^4 \ \log(\d k_0 \hat{\g}) \ \frac{\w^2_s}{\w_{\d}^2} \ \mathbb{S} \ \m_0 \ \Big( \ve(\w_s,k_s) - \ve_0 \Big) \ \w_{\d}^2 \\
    &\overset{(\ref{w_e k_e for d=2})}{=} \ \frac{i}{4\pi} \ \d^6 \ k_0^4 \ \log(\d k_0 \hat{\g}) \ \frac{\w^2_s}{\w_{\d}^2} \ \mathbb{S} \ \m_0 \ \Big( \ve(\w_s,k_s) - \ve_0 \Big) \  \frac{16\pi^2\g^2}{\a^2\d^8\m_0^2 k_0^4 \log(\d k_0 \hat{\g})^2 \mathbb{G}^2}\\
    & \ = \ \frac{\w_s^2}{\w^2_{\ve}} \cdot \frac{ 4 \pi i \mathbb{S} \g^2 \Big( \ve(\w_s,k_s) - \ve_0 \Big) }{ \a^2 \d^2 \log(\d k_0 \hat{\g}) \m_0 \mathbb{G}^2 }.
\end{align*}
Hence, (\ref{Headshot 2}) gives the desired result.
\end{proof}
We continue our analysis in the same way as in the previous case.
\begin{proposition}
Let $d=2$. For $\w$ real close to the resonant frequency $\w_s$, the following approximation for the field scattered by the halide perovskite nano-particle $\Omega = \d D + z$ holds:
\begin{align}
    u(x) - u_{in}(x) \approx \frac{ 4\pi \Big( 1 + \d^2 \w^2 \xi(\w,k)G(x-y,\d k_0) \Big) }{i \d^6 k_0^4 \w^2 \log(\d k_0 \hat{\g}) \w^2 \xi(\w,k)  \mathbb{S}} \langle u_{in}, u_{\d} \rangle \int_D u_{\d}.
\end{align}
\begin{proof}
As we mentioned at the beginning or our analysis, our goal is to find $u\in L^{2}(D)$, $u\ne 0$, such that (\ref{pb}) is satisfied. Using the pole-pencil decomposition on this Lippmann-Schwinger formulation of the problem, we can rewrite, as in Corollary 2.3 in \cite{ADFMS},
\begin{align*}
    u(x)-u_{in}(x) \ \approx -\d^2 \w^2 \xi(\w,k) G(x-&y,\d k_0) \frac{\langle u_{in}, u_{\d} \rangle \int_D u_{\d}}{1 - \d^2 \w^2 \xi(\w,k) \l_{\d}}\\
     &+ \d^6 k_0^4 \log(\d k_0 \hat{\g}) \w^2 \xi(\w,k) \frac{ \langle K^{(2)}_D[u_{\d}], u_{\d} \rangle \langle u_{in}, u_{\d} \rangle \int_D u_{\d}}{\Big(1 - \d^2 \w^2 \xi(\w,k) \l_{\d}\Big)^2},
\end{align*}
which, using \eqref{Headshot 2}, becomes:
\begin{align*}
    %&\overset{(\ref{Headshot})}{=} \ -\d^2 \w^2 \xi(\w,k) G(x-y,\d k_0) \frac{\langle u_{in}, u_{\d} \rangle \int_D u_{\d}}{- \d^6 k_0^4 \log(\d k_0 \hat{\g}) \w^2 \xi(\w,k) \langle K^{(2)}_D[u_{\d}], u_{\d} \rangle} \\
    %& \ \ \ \ \ \ \ \ \ \ \ \ \ \ \ \ \ \ \ \ \ \ \ \ \ \ \ \ \ \ + \d^6 k_0^4 \log(\d k_0 \hat{\g}) \w^2 \xi(\w,k) \frac{ \langle K^{(2)}_D[u_{\d}], u_{\d} \rangle \langle u_{in}, u_{\d} \rangle \int_D u_{\d}}{ \d^{12} k_0^8 \log(\d k_0 \hat{\g})^2 \w^4 \xi(\w,k)^2 \langle K^{(2)}_D[u_{\d}], u_{\d} \rangle^2}\\
    u(x)-u_{in}(x) & \ \approx \ \frac{G(x-y,\d k_0)\langle u_{in}, u_{\d} \rangle \int_D u_{\d} }{\d^4 k_0^4 \log(\d k_0 \hat{\g}) \langle K^{(2)}_D[u_{\d}], u_{\d} \rangle} + \frac{\langle u_{in}, u_{\d} \rangle \int_D u_{\d}}{\d^6 k_0^4 \w^2 \log(\d k_0 \hat{\g}) \w^2 \xi(\w,k)\langle K^{(2)}_D[u_{\d}], u_{\d} \rangle}.
\end{align*}
From \eqref{S}, this gives
\begin{align*}
    %& \ = \frac{ \Big( 1 + \d^2 \w^2 \xi(\w,k)G(x-y,\d k_0) \Big) }{\d^6 k_0^4 \w^2 \log(\d k_0 \hat{\g}) \w^2 \xi(\w,k)\langle K^{(2)}_D[u_{\d}], u_{\d} \rangle} \langle u_{in}, u_{\d} \rangle \int_D u_{\d}\\
    %&\overset{(\ref{S})}{=} \ \frac{ \Big( 1 + \d^2 \w^2 \xi(\w,k)G(x-y,\d k_0) \Big) }{\d^6 k_0^4 \w^2 \log(\d k_0 \hat{\g}) \w^2 \xi(\w,k) \frac{i}{4\pi} \mathbb{S}} \langle u_{in}, u_{\d} \rangle \int_D u_{\d}\\
    u(x)-u_{in}(x) & \ \approx \ \frac{ 4\pi \Big( 1 + \d^2 \w^2 \xi(\w,k)G(x-y,\d k_0) \Big) }{i \d^6 k_0^4 \w^2 \log(\d k_0 \hat{\g}) \w^2 \xi(\w,k)  \mathbb{S}} \langle u_{in}, u_{\d} \rangle \int_D u_{\d},
\end{align*}
which is the desired result.
\end{proof}
\end{proposition}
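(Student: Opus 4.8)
The plan is to follow the same route as in the three-dimensional proposition, now carrying along the logarithmic factors that are characteristic of $d=2$. The starting point is the Lippmann--Schwinger representation of Theorem~\ref{Integral representation}, which writes the scattered field as
\[
u(x)-u_{in}(x) = -\d^2\w^2\xi(\w,k)\int_D G(x-y,\d k_0)\,u(y)\,\upd y,
\]
so the whole task reduces to inserting a sufficiently accurate approximation of the interior field $u=(I-\d^2\w^2\xi(\w,k)K^{\d k_0}_D)^{-1}[u_{in}]$. First I would substitute the expansion $K^{\d k_0}_D=\sum_{n=-1}^{+\infty}c_nK^{(n)}_D$ and split the resolvent into $\mathfrak{L}+\mathfrak{L}(\d k_0)^4\log(\d k_0\hat{\g})B_2\mathfrak{L}+O(\d^6)$, which is exactly the decomposition already established in the preceding proposition (and follows the scheme of Corollary~2.3 in \cite{ADFMS}).

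Next I would apply the pole--pencil decomposition to $\mathfrak{L}$, defined in \eqref{L}, writing $\mathfrak{L}[\cdot]=\frac{\langle\cdot,u_{\d}\rangle u_{\d}}{1-\d^2\w_s^2\xi(\w_s,k_s)\l_{\d}}+R$ and discarding the analytic remainder $R$, which is legitimate because $\w$ is taken real and close to $\w_s$. Feeding this into the two-term resolvent and then back into the Lippmann--Schwinger formula produces an intermediate expression involving $\langle u_{in},u_{\d}\rangle\int_D u_{\d}$ with a simple pole $1-\d^2\w^2\xi(\w,k)\l_{\d}$ in the first term and its square in the second, precisely the two-term formula that opens the computation.

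The decisive step is to eliminate these vanishing denominators through the resonance condition. From \eqref{Headshot 2}, or equivalently from \eqref{Headshot} together with \eqref{S}, one has $1-\d^2\w_s^2\xi(\w_s,k_s)\l_{\d}=-\d^6k_0^4\log(\d k_0\hat{\g})\w_s^2\xi(\w_s,k_s)\langle K^{(2)}_D[u_{\d}],u_{\d}\rangle$. Substituting this into the simple pole of the first term and into the squared pole of the second collapses both into expressions in which $\langle K^{(2)}_D[u_{\d}],u_{\d}\rangle$ remains in the denominator; replacing it via \eqref{S} by $\frac{i}{4\pi}\mathbb{S}$ and factoring out the common $1$ and $\d^2\w^2\xi(\w,k)G(x-y,\d k_0)$ contributions then yields the claimed formula.

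The main obstacle I anticipate is bookkeeping rather than anything conceptual: in contrast to three dimensions, the leading resolvent $\mathfrak{L}$ already carries the $\log(\d k_0\hat{\g})$ factors through $B_{-1}$ and $B_1$, so I must check that the pole--pencil decomposition still isolates a single simple pole whose residue is governed by the eigenpair $(\l_{\d},u_{\d})$ of $K^{\d k_0}_D$, and that the surviving correction is genuinely of order $(\d k_0)^4\log(\d k_0\hat{\g})$. Some care is also needed to justify replacing $\w$ by $\w_s$ in the non-singular prefactors while applying the resonance identity only where the denominator vanishes.
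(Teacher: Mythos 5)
Your proposal follows essentially the same route as the paper: the same two-term pole-pencil expansion of the resolvent applied to the Lippmann--Schwinger representation, the same elimination of the vanishing denominators via the resonance identity \eqref{Headshot 2}, and the same final substitution $\langle K^{(2)}_D[u_{\d}],u_{\d}\rangle = \tfrac{i}{4\pi}\mathbb{S}$ from \eqref{S}. The only difference is that you spell out how the two-term formula arises from the expansion $\mathfrak{L}+\mathfrak{L}(\d k_0)^4\log(\d k_0\hat{\g})B_2\mathfrak{L}$, which the paper simply imports by citing Corollary~2.3 of \cite{ADFMS}.
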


We finish our analysis with the following result.

\begin{proposition}
Let $d=2$ and $\d$ be small enough. Then, the $o(\d^4)$ approximation of the subwavelength resonant frequencies $\w_s$ of the halide perovskite nano-particle $\Omega = \d D + z$ satisfies
\begin{align}
    1 - \d^2 \w_s^2 \xi(\w_s,k_s) \left( -\frac{|D|}{2\pi} \log(\d k_0 \hat{\g}) + \langle K^{(0)}_D[\hat{\mathbb{I}}_D], \hat{\mathbb{I}}_D \rangle + \d^2 k_0^2 \log(\d) \langle K^{(1)}_D[\hat{\mathbb{I}}_D],\hat{\mathbb{I}}_D \rangle \Big) \right) = O\big( \d^4 \big),
\end{align}
where $|D|$ is the volume of $D$ and $\hat{\mathbb{I}}_D = \mathbb{I}_D / \sqrt{|D|}$. 
\end{proposition}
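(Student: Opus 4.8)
The plan is to specialise the two-dimensional eigenvalue expansion (\ref{l_e approx d=2}) to the explicit dominant eigenpair of $K^{\d k_0}_D$ and then insert the outcome into the resonance condition. The crucial structural point is that the operator $K^{(-1)}_D$ appearing in (\ref{expansion d=2}) is of rank one: since
\begin{align*}
    K^{(-1)}_D[u](x) = -\frac{1}{2\pi}\int_D u(y)\,\upd y
\end{align*}
is independent of $x$, its range is spanned by the constant function, i.e.\ by the indicator $\mathbb{I}_D$. First I would check that the normalised function $\hat{\mathbb{I}}_D = \mathbb{I}_D/\sqrt{|D|}$ is, up to scaling, the only eigenfunction with nonzero eigenvalue, and that a one-line computation yields $K^{(-1)}_D[\hat{\mathbb{I}}_D] = -\frac{|D|}{2\pi}\hat{\mathbb{I}}_D$. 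Hence the relevant eigenpair of $K^{(-1)}_D$ is $\l_{-1} = -|D|/(2\pi)$ with eigenvector $u_{-1} = \hat{\mathbb{I}}_D$.

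Because the coefficient $\log(\d k_0\hat{\g})$ of $K^{(-1)}_D$ diverges as $\d\to0$ while the coefficients of $K^{(0)}_D$ and $K^{(1)}_D$ stay bounded or vanish, the subwavelength eigenpair of $K^{\d k_0}_D$ is exactly the perturbation of $(\l_{-1},u_{-1})$. I would therefore apply (\ref{l_e approx d=2}) with this choice of $\l_{-1}$ and $u_{-1}$, giving
\begin{align*}
    \l_{\d} \approx -\frac{|D|}{2\pi}\log(\d k_0\hat{\g}) + \langle K^{(0)}_D[\hat{\mathbb{I}}_D],\hat{\mathbb{I}}_D\rangle + (\d k_0)^2\log(\d k_0\hat{\g})\,\langle K^{(1)}_D[\hat{\mathbb{I}}_D],\hat{\mathbb{I}}_D\rangle.
\end{align*}
This is already the bracketed expression in the statement, except that the last coefficient carries $\log(\d k_0\hat{\g})$ rather than $\log(\d)$; the difference is a constant multiple of $\d^2\langle K^{(1)}_D[\hat{\mathbb{I}}_D],\hat{\mathbb{I}}_D\rangle$, which I will show is swallowed by the error once the small prefactor is accounted for.

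The final step is to substitute this $\l_{\d}$ into the resonance condition. At leading order (\ref{pb}) gives $1-\d^2\w_s^2\xi(\w_s,k_s)\l_{\d}=0$, and the sharper version (\ref{Headshot 2}) gives
\begin{align*}
    1 - \d^2\w_s^2\xi(\w_s,k_s)\l_{\d} = -\frac{i\,\d^6 k_0^4\log(\d k_0\hat{\g})\,\w_s^2\xi(\w_s,k_s)\,\mathbb{S}}{4\pi}.
\end{align*}
The quantitative engine here is the scaling of the prefactor: the resonance relation $\d^2\w_s^2\xi(\w_s,k_s)\l_{\d}\approx1$ combined with $\l_{\d}\sim -\frac{|D|}{2\pi}\log(\d k_0\hat{\g})$ forces $\d^2\w_s^2\xi(\w_s,k_s) = O(1/|\log\d|)$, equivalently $\w_s^2\xi(\w_s,k_s) = O\!\big(\d^{-2}/|\log\d|\big)$. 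Feeding this bound into the displayed right-hand side shows the $\log$ factors cancel and the residual is $O(\d^4)$, which is the claimed error.

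The main obstacle will be the error bookkeeping that accompanies this substitution. One must check that each quantity dropped or altered — the tail $O(\d^4\log\d)$ of (\ref{expansion d=2}), the deviation $u_{\d}-u_{-1}$ in the Rayleigh quotients of (\ref{l_e approx d=2}), and the constant mismatch between $\log(\d k_0\hat{\g})$ and $\log(\d)$ in the $K^{(1)}_D$ term — stays within the claimed residual after multiplication by the prefactor $\d^2\w_s^2\xi(\w_s,k_s)=O(1/|\log\d|)$. The first two are comfortably of the required order; the logarithmic mismatch is the most delicate, since it is suppressed by only a single factor $1/|\log\d|$, and controlling it is where the argument has to be made carefully, separating the dominant $\log$ term from the bounded corrections. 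The remaining ingredients are just the direct evaluations of $\langle K^{(0)}_D[\hat{\mathbb{I}}_D],\hat{\mathbb{I}}_D\rangle$ and $\langle K^{(1)}_D[\hat{\mathbb{I}}_D],\hat{\mathbb{I}}_D\rangle$ already recorded in the preceding corollaries.
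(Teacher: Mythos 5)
Your proposal is correct and follows essentially the same route as the paper: the paper likewise identifies $\hat{\mathbb{I}}_D$ as the eigenfunction of the rank-one operator $K^{(-1)}_D$ with eigenvalue $-|D|/(2\pi)$, obtains the $K^{(0)}_D$ and $K^{(1)}_D$ corrections as Rayleigh quotients on $\hat{\mathbb{I}}_D$ (formalised there via the ansatz $\n(\d)=\log(\d)\n_0+\n_1+\d^2\log(\d)\n_2+O(\d^2)$ matched order by order), and then substitutes into the resonance condition. The $\log(\d k_0 \hat{\g})$ versus $\log(\d)$ mismatch you single out as delicate is handled in the paper exactly as you propose: the constant $\d^2\log(k_0\hat{\g})$ piece is simply relegated to the $O(\d^2)$ remainder of the eigenvalue expansion before the prefactor $\d^2\w_s^2\xi(\w_s,k_s)$ is applied.
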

\begin{proof}
We want to find $\w_s \in \mathbb{C}$ such that
\begin{align*}
    &\Big( I - \d^2 \w_s^2\xi(\w_s,k_s) K^{\d k_0}_D \Big)[u](x) = 0,
\end{align*}
which, for small $\delta$, can be written as
\begin{align*}
    \Big( I - \d^2 \w_s^2\xi(\w_s,k_s) \Big(&\log(\d k_0 \hat{\g}) K^{(-1)}_D + K^{(0)}_D + (\d k_0)^2 \log(\d k_0 \hat{\g}) K^{(1)}_D\Big) \Big)[u](x) = O\Big(\d^6 \log(\d)\Big).
\end{align*}
Let us denote
$$
M^{\d k_0}_D := \log(\d k_0 \hat{\g}) K^{(-1)}_D + K^{(0)}_D + (\d k_0)^2 \log(\d k_0 \hat{\g}) K^{(1)}_D.
$$
We take $\n(\d) \in \sigma(M^{\d k_0}_D)$ and consider the eigenvalue problem for $M^{\d k_0}_D$:
\begin{align}\label{eig M}
    M^{\d k_0}_D[\Psi] = \n(\d) \Psi.
\end{align}
We employ the ansatz
\begin{align*}
    \Psi(\d) &= \Psi_0 + O\left(\frac{1}{\log(\d)}\right),\\
    \n(\d) &= \log(\d ) \n_0 + \n_1 + \d^2 \log(\d) \n_2 + O\Big( \d^2 \Big).
\end{align*}
% we obtain
% \begin{align*}
%     (\ref{eig M}) \Leftrightarrow \Big( \log(\d k_0 \hat{\g}) K^{(-1)}_D + K^{(0)}_D + &(\d k_0)^2 \log(\d k_0 \hat{\g}) K^{(1)}_D \Big) [\Psi_0] =\\
%     &\Big(  \log(\d ) \n_0 + \n_1 + \d^2 \log(\d ) \n_2 \Big)[\Psi_0] + O\Big( \d^2 \Big) \Leftrightarrow\\
%     %\Big( \log(\d) K^{(-1)}_D + \log(k_0\hat{\g})K^{(-1)}_D + K^{(0)}_D + &(\d k_0)^2 \log(\d k_0 \hat{\g}) K^{(1)}_D \Big) [\Psi_0] =\\
%     %&\Big(  \log(\d ) \n_0 + \n_1 + \d^2 \log(\d) \n_2 \Big)[\Psi_0] + O\Big( \d^2 \Big) \Leftrightarrow\\
%     \Big( \log(\d) K^{(-1)}_D + \log(k_0\hat{\g})K^{(-1)}_D + K^{(0)}_D + &(\d k_0)^2 \log(\d) K^{(1)}_D \Big) [\Psi_0] =\\
%     &\Big(  \log(\d ) \n_0 + \n_1 + \d^2 \log(\d) \n_2 \Big)[\Psi_0] + O\Big( \d^2 \Big),
% \end{align*}
%
From \eqref{eig M} and the fact that $\d^2\log(\d k_0 \hat{\g})=\d^2\log(\d )+O(\d^2)$ we have that
\begin{align*}
    \Big( \log(\d) K^{(-1)}_D + \log(k_0\hat{\g})K^{(-1)}_D + K^{(0)}_D &+ (\d k_0)^2 \log(\d) K^{(1)}_D \Big) [\Psi_0] = \\
    &\Big(  \log(\d ) \n_0 + \n_1 + \d^2 \log(\d) \n_2 \Big)[\Psi_0] + O\Big( \d^2 \Big).
\end{align*}
%since $(\d k_0)^2 \log( k_0 \hat{\g}) K^{(1)}_D = O(\d^2)$. 
Equating the $O(\log \d)$ terms gives
\begin{align*}
    K^{(-1)}_D[\Psi_0] = \n_0 \Psi_0 &\Rightarrow \n_0 \hat{\mathbb{I}}_D = K^{(-1)}_D [\hat{\mathbb{I}}_D] \Rightarrow \n_0 \hat{\mathbb{I}}_D = - \frac{|D|}{2\pi} \hat{\mathbb{I}}_D \Rightarrow \n_0 = - \frac{|D|}{2\pi}.
\end{align*}
Then, equating the $O(1)$ terms gives
\begin{align*}
    \log(k_0 \hat{\g}) K^{(-1)}_D[\hat{\mathbb{I}}_D] &+ K^{(0)}_D[\hat{\mathbb{I}}_D] = \n_1 \hat{\mathbb{I}}_D \Rightarrow \n_1 \hat{\mathbb{I}}_D = - \frac{|D|}{2\pi} \log(k_0 \hat{\g}) \hat{\mathbb{I}}_D + K^{(0)}_D[\hat{\mathbb{I}}_D] \\
    &\Rightarrow \n_1 = - \frac{|D|}{2\pi} \log(k_0 \hat{\g}) + \langle K^{(0)}_D[\hat{\mathbb{I}}_D], \hat{\mathbb{I}}_D \rangle.
\end{align*}
Using the same reasoning for the $O\Big(\d^2 \log(\d)\Big)$ terms, we get
\begin{align*}
    \n_2 \hat{\mathbb{I}}_D = k_0^2 K^{(1)}_D[\hat{\mathbb{I}}_D] \Rightarrow \n_2 = k_0^2 \langle K^{(1)}_D[\hat{\mathbb{I}}_D],\hat{\mathbb{I}}_D \rangle.
\end{align*}
Thus, 
\begin{align*}
    \n(\d) &= \log(\d) \n_0 + \n_1 + \d^2 \log(\d) \n_2 + O\Big( \d^2 \Big)\\
    &= -\frac{|D|}{2\pi} \log(\d) - \frac{|D|}{2\pi} \log(k_0 \hat{\g}) + \langle K^{(0)}_D[\hat{\mathbb{I}}_D], \hat{\mathbb{I}}_D \rangle + \d^2 k_0^2 \log(\d) \langle K^{(1)}_D[\hat{\mathbb{I}}_D],\hat{\mathbb{I}}_D \rangle + O\Big( \d^2 ) \Big)\\
    &= -\frac{|D|}{2\pi} \log(\d k_0 \hat{\g}) + \langle K^{(0)}_D[\hat{\mathbb{I}}_D], \hat{\mathbb{I}}_D \rangle + \d^2 k_0^2 \log(\d) \langle K^{(1)}_D[\hat{\mathbb{I}}_D],\hat{\mathbb{I}}_D \rangle + O\Big( \d^2 \Big).
\end{align*}
Using these expressions, $1 - \d^2 \w_s^2 \xi(\w_s,k_s) M^{\d k_0}_D = O ( \d^6 \log(\d) )$ can be rewritten as
\begin{align*}
    % &1 - \d^2 \w_s^2 \xi(\w_s,k_s) M^{\d k_0}_D = O\Big( \d^6 \log(\d) \Big) \Leftrightarrow \\
    &1 - \d^2 \w_s^2 \xi(\w_s,k_s) \Big( -\frac{|D|}{2\pi} \log(\d k_0 \hat{\g}) + \langle K^{(0)}_D[\hat{\mathbb{I}}_D], \hat{\mathbb{I}}_D \rangle + \d^2 k_0^2 \log(\d) \langle K^{(1)}_D[\hat{\mathbb{I}}_D],\hat{\mathbb{I}}_D \rangle\\
    &\hspace{11cm} + O\Big( \d^2 \Big) \Big) = O\Big( \d^6 \log(\d) \Big),
    % & 1 - \d^2 \w_s^2 \xi(\w_s,k_s) \left( -\frac{|D|}{2\pi} \log(\d k_0 \hat{\g}) + \langle K^{(0)}_D[\hat{\mathbb{I}}_D], \hat{\mathbb{I}}_D \rangle + \d^2 k_0^2 \log(\d) \langle K^{(1)}_D[\hat{\mathbb{I}}_D],\hat{\mathbb{I}}_D \rangle \Big) \right) = O\Big( \d^4 \Big),
\end{align*}
from which the result follows.
\end{proof}

\section{Hybridization of Two Resonators} \label{sec:two}

\subsection{Three Dimensions}

Let us consider two identical halide perovskite resontators $D_1$ and $D_2$ (e.g. the speres in Figure~\ref{fig:dimer}), made from the same material. From now on, we will denote the permittivity by $\xi(\w,k)$, where $\w$ is the frequency and $k$ the associated wavenumber. In order to generalize our results, we will define it by
\begin{align} \label{new permittivity}
    \xi(\w,k) := \frac{\m_0\a}{\b - \w^2 + \h k^2 - i \g \w},
\end{align}
where the positive constants $\a,\b,\g$ and $\h$ characterise the material.

\begin{figure}
\begin{center}
\includegraphics[scale=0.8]{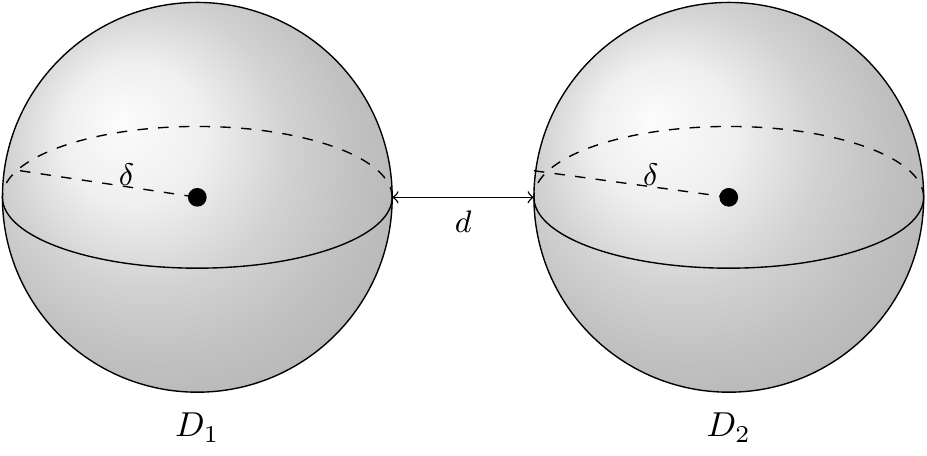}  
\end{center}
\caption{Two identical spherical halide perovskite resontators $D_1$ and $D_2$ of radius $\d$, made from the same material, placed at a distance $d$ from each other. } \label{fig:dimer}
\end{figure}

Then, since there is an interaction between the two resonators, the field $u-u_{in}$ scattered by the two particles will be given by the following representation formula:
\begin{align}\label{repping}
    (u-u_{in})(x) = - \d^2 \w^2 \xi(\w,k) \left[ \int_{D_1} G(x-y,\d k_0) u(y) \upd y + \int_{D_2} G(x-y,\d k_0) u(y) \upd y  \right] \ \ \ \text{ for } x \in \mathbb{R}^d.
\end{align}
\begin{definition} \label{def:KR}
We define the integral operators $K^{\d k_0}_{D_i}$ and $R^{\d k_0}_{D_i D_j}$, for $i,j=1,2$, by
\begin{align*}
    K^{\d k_0}_{D_i}: u\big|_{D_i} \in L^2(D_i) \longmapsto - \int_{D_i} G(x-y,\d k_0) u(y) \upd y \Big|_{D_i} \in L^2(D_i)
\end{align*}
and
\begin{align*}
    R^{\d k_0}_{D_i D_j}: u\big|_{D_i} \in L^2(D_i) \longmapsto - \int_{D_i} G(x-y,\d k_0) u(y) \upd y \Big|_{D_j} \in L^2(D_j).
\end{align*}
\end{definition}
Then, the following lemma is a direct consequence of these definitions.
\begin{lemma}
The scattering problem (\ref{repping}) can be restated, using the Definition~\ref{def:KR}, as
% Given the definition of the integral operators $K^{\d k_0}_{D_i}$ and $R^{\d k_0}_{D_i D_j}$, for $i,j=1,2$, from (\ref{repping}), we obtain the following system of operator equations:
\begin{align}\label{system 1}
    \begin{pmatrix}
    1 - \d^2 \w^2 \xi(\w,k) K^{\d k_0}_{D_1} & - \d^2 \w^2 \xi(\w,k) R^{\d k_0}_{D_2 D_1} \\ 
    -\d^2 \w^2 \xi(\w,k) R^{\d k_0}_{D_1 D_2} & 1 - \d^2 \w^2 \xi(\w,k) K^{\d k_0}_{D_2}
    \end{pmatrix}
    \begin{pmatrix}
    u|_{D_1}\\
    u|_{D_2}
    \end{pmatrix}
    =
    \begin{pmatrix}
    u_{in}|_{D_1}\\
    u_{in}|_{D_2}
    \end{pmatrix}.
\end{align}
\end{lemma}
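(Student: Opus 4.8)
The plan is to obtain the two rows of the matrix system directly from the representation formula (\ref{repping}) by evaluating it separately on each resonator and then recognising the resulting integrals as the operators of Definition~\ref{def:KR}. Since (\ref{repping}) is valid for every $x\in\mathbb{R}^d$, it holds in particular for $x\in D_1$ and for $x\in D_2$, and these two restrictions will produce the first and second components of the system.

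First I would fix $x\in D_1$ in (\ref{repping}). The integral over $D_1$ then has both its integration variable and its evaluation point in $D_1$, so by definition it equals $-K^{\d k_0}_{D_1}[u|_{D_1}](x)$. The integral over $D_2$ has integration variable in $D_2$ but evaluation point in $D_1$, which is exactly the off-diagonal operator $R^{\d k_0}_{D_2 D_1}$ acting on $u|_{D_2}$, so it equals $-R^{\d k_0}_{D_2 D_1}[u|_{D_2}](x)$. Substituting, the two leading minus signs cancel and I get
\begin{align*}
    (u - u_{in})\big|_{D_1} = \d^2\w^2\xi(\w,k)\Big( K^{\d k_0}_{D_1}[u|_{D_1}] + R^{\d k_0}_{D_2 D_1}[u|_{D_2}] \Big),
\end{align*}
which, after moving the scattered-field terms to the left, is precisely the first row of (\ref{system 1}), with the scalar $1$ read as the identity operator $I$ on $L^2(D_1)$.

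Repeating the same computation with $x\in D_2$ yields the second row: the integral over $D_1$ becomes the cross term $-R^{\d k_0}_{D_1 D_2}[u|_{D_1}](x)$ and the integral over $D_2$ becomes the self term $-K^{\d k_0}_{D_2}[u|_{D_2}](x)$. Collecting the two equations into a $2\times 2$ block system acting on the vector $(u|_{D_1},u|_{D_2})^\top$ then gives (\ref{system 1}).

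The only point requiring care — and the one I would treat as the main obstacle — is the index bookkeeping for the off-diagonal operators: one must verify against Definition~\ref{def:KR} that $R^{\d k_0}_{D_2 D_1}$ (integration over $D_2$, restriction to $D_1$) is the term multiplying $u|_{D_2}$ in the first row, and symmetrically that $R^{\d k_0}_{D_1 D_2}$ multiplies $u|_{D_1}$ in the second row, while tracking the sign from the overall factor $-\d^2\w^2\xi(\w,k)$ against the minus sign built into the operator definitions. No analytic subtlety enters, since (\ref{repping}) is assumed to hold pointwise on all of $\mathbb{R}^d$ and the restriction to each domain is immediate.
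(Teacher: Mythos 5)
Your proposal is correct and is exactly the argument the paper has in mind: the paper offers no written proof, stating only that the lemma ``is a direct consequence of these definitions,'' and your restriction of (\ref{repping}) to $x\in D_1$ and $x\in D_2$ with the sign bookkeeping against Definition~\ref{def:KR} is precisely that direct consequence spelled out. Nothing is missing.
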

Thus, the scattering resonance problem is to find $\w$ such that the operator in (\ref{system 1}) is singular, or equivalently, such that there exists $(u_1,u_2) \in L^2(D_1) \times L^2(D_2)$, $(u_1,u_2) \ne 0$, such that
\begin{align}\label{system 2}
    \begin{pmatrix}
    1 - \d^2 \w^2 \xi(\w,k) K^{\d k_0}_{D_1} & - \d^2 \w^2 \xi(\w,k) R^{\d k_0}_{D_2 D_1} \\ 
    -\d^2 \w^2 \xi(\w,k) R^{\d k_0}_{D_1 D_2} & 1 - \d^2 \w^2 \xi(\w,k) K^{\d k_0}_{D_2}
    \end{pmatrix}
    \begin{pmatrix}
    u_{1}\\
    u_{2}
    \end{pmatrix}
    =
    \begin{pmatrix}
    0\\
    0
    \end{pmatrix}.
\end{align}
\begin{theorem} \label{thm:33}
Let $d=3$. Then, the hybridized subwavelength resonant frequencies $\w$ satisfy
\begin{equation} \label{hybrid 3}
    \Big(1 - \d^2 \w^2\xi(\w,k)\l_{\d}\Big)^2 - \d^4 \w^4 \xi(\w,k)^2 \langle R^{\d k_0}_{D_1 D_2} \phi_1^{(\d)}, \phi_2^{(\d)} \rangle \langle R^{\d k_0}_{D_2 D_1} \phi_2^{(\d)}, \phi_1^{(\d)} \rangle = 0,
\end{equation}
where $\phi_i^{(\d)}$, for $i=1,2$, is the eigenfunction associated to the eigenvalue $\l_{\d}$ of the potential $K^{\d k_0}_{D_i}$.
\end{theorem}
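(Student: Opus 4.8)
The plan is to reduce the singularity condition for the $2\times 2$ operator matrix in (\ref{system 2}) to a scalar determinant condition by projecting onto the eigenfunctions of the diagonal blocks. Writing the system row by row, a nontrivial pair $(u_1,u_2)$ must satisfy
\begin{align*}
    \big(1 - \d^2\w^2\xi(\w,k) K^{\d k_0}_{D_1}\big)u_1 &= \d^2\w^2\xi(\w,k)\, R^{\d k_0}_{D_2 D_1} u_2,\\
    \big(1 - \d^2\w^2\xi(\w,k) K^{\d k_0}_{D_2}\big)u_2 &= \d^2\w^2\xi(\w,k)\, R^{\d k_0}_{D_1 D_2} u_1.
\end{align*}
Because $D_1$ and $D_2$ are identical, each diagonal block $K^{\d k_0}_{D_i}$ shares the eigenvalue $\l_{\d}$ with eigenfunction $\phi_i^{(\d)}$ (a translate of the other), so that $\big(1 - \d^2\w^2\xi(\w,k) K^{\d k_0}_{D_i}\big)\phi_i^{(\d)} = \big(1 - \d^2\w^2\xi(\w,k)\l_{\d}\big)\phi_i^{(\d)}$.

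First I would apply the pole-pencil decomposition of \cite{AFKRYZ}, exactly as in the single-resonator analysis of section~\ref{sec:AA_3}, to the resolvents $\big(1 - \d^2\w^2\xi(\w,k) K^{\d k_0}_{D_i}\big)^{-1}$. Near a hybridized frequency the dominant contribution of each resolvent is its rank-one singular part $\langle \cdot, \phi_i^{(\d)}\rangle \phi_i^{(\d)} / \big(1 - \d^2\w^2\xi(\w,k)\l_{\d}\big)$, with an analytic remainder that may be discarded at leading order. This justifies the leading-order ansatz $u_i \approx a_i \phi_i^{(\d)}$ for scalars $a_1,a_2$, reducing the two coupled operator equations to a pair of scalar relations once we pair each equation with the corresponding $\phi_i^{(\d)}$ (normalised so that $\|\phi_i^{(\d)}\|=1$).

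Next I would take the $L^2(D_i)$ inner product of the $i$-th equation with $\phi_i^{(\d)}$. Using the eigenrelation above, the first equation gives $a_1\big(1 - \d^2\w^2\xi(\w,k)\l_{\d}\big) = \d^2\w^2\xi(\w,k)\, a_2\, \langle R^{\d k_0}_{D_2 D_1}\phi_2^{(\d)}, \phi_1^{(\d)}\rangle$, and symmetrically for the second. This is a homogeneous $2\times 2$ linear system for $(a_1,a_2)$ whose matrix has diagonal entries $1 - \d^2\w^2\xi(\w,k)\l_{\d}$ and off-diagonal entries $-\d^2\w^2\xi(\w,k)\langle R^{\d k_0}_{D_j D_i}\phi_j^{(\d)}, \phi_i^{(\d)}\rangle$. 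A nontrivial $(a_1,a_2)$ exists precisely when the determinant vanishes, which is exactly (\ref{hybrid 3}).

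The main obstacle is the rigorous bookkeeping of the pole-pencil remainders: one must verify that the coupling mediated by the analytic (non-resonant) part of each resolvent, together with the correction $u_i - a_i\phi_i^{(\d)}$, feeds into the projected equations only at subleading order, so that the determinant condition holds at the stated order. The symmetry of the dimer is what makes both diagonal entries equal, collapsing the determinant into the clean squared form; checking that $R^{\d k_0}_{D_i D_j}$ is small (it scales with $G$ evaluated at the interparticle distance) then confirms that the off-diagonal terms are a genuine perturbation, so that the degenerate single-particle resonance splits, or hybridises, into the two roots of (\ref{hybrid 3}).
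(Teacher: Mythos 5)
Your proposal is correct and follows essentially the same route as the paper: invert (or diagonalise) the diagonal blocks via the pole-pencil decomposition, discard the analytic remainders, reduce to a homogeneous $2\times 2$ scalar system, and impose the vanishing of its determinant. The only cosmetic difference is that you take the amplitudes $a_1,a_2$ in $u_i\approx a_i\phi_i^{(\d)}$ as the scalar unknowns, whereas the paper works with $\langle R^{\d k_0}_{D_1 D_2}u_1,\phi_2^{(\d)}\rangle$ and $\langle R^{\d k_0}_{D_2 D_1}u_2,\phi_1^{(\d)}\rangle$; both yield the same determinant condition (\ref{hybrid 3}).
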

\begin{proof}
We observe that (\ref{system 2}) is equivalent to
\begin{align*}
    \begin{pmatrix}
    1 - \d^2 \w^2 \xi(\w,k) K^{\d k_0}_{D_1} & 0 \\ 
    0 & 1 - \d^2 \w^2 \xi(\w,k) K^{\d k_0}_{D_2}
    \end{pmatrix}
    \begin{pmatrix}
    u_1\\
    u_2
    \end{pmatrix}
    - \d^2 \w^2 \xi(\w,k)
    \begin{pmatrix}
    0 & R^{\d k_0}_{D_2 D_1} \\ 
    R^{\d k_0}_{D_1 D_2} & 0
    \end{pmatrix}
    \begin{pmatrix}
    u_1 \\
    u_2
    \end{pmatrix} = 0,
\end{align*}
which gives 
\begin{align}\label{system 3}
    \begin{pmatrix}
    u_1 \\
    u_2
    \end{pmatrix}
    - \d^2 \w^2 \xi(\w,k)
    \begin{pmatrix}
    \Big(1 - \d^2 \w^2 \xi(\w,k) K^{\d k_0}_{D_1}\Big)^{-1} & 0 \\ 
    0 & \Big(1 - \d^2 \w^2 \xi(\w,k) K^{\d k_0}_{D_2}\Big)^{-1}
    \end{pmatrix}
    \begin{pmatrix}
    R^{\d k_0}_{D_2 D_1} u_2 \\
    R^{\d k_0}_{D_1 D_2} u_1
    \end{pmatrix}
    =0.
\end{align}
Let us now apply a pole-pencil decomposition on the operators $\Big(1 - \d^2 \w^2 \xi(\w,k) K^{\d k_0}_{D_i}\Big)^{-1}$, for $i=1,2$. We see that
\begin{align*}
    \Big(1 - \d^2 \w^2 \xi(\w,k) K^{\d k_0}_{D_1}\Big)^{-1}(\cdot) = \frac{ \langle \cdot, \phi_1^{(\d)} \rangle \phi_1^{(\d)} }{1 - \d^2 \w^2 \xi(\w,k) \l_{\d}} + R_1[\w](\cdot)
\end{align*}
and
\begin{align*}
    \Big(1 - \d^2 \w^2 \xi(\w,k) K^{\d k_0}_{D_2}\Big)^{-1}(\cdot) = \frac{ \langle \cdot, \phi_2^{(\d)} \rangle \phi_2^{(\d)} }{1 - \d^2 \w^2 \xi(\w,k) \l_{\d}} + R_2[\w](\cdot),
\end{align*}
where the remainder terms $R_1[\w](\cdot)$ and $R_2[\w](\cdot)$ are holomorphic for $\w$ in a neighborhood of $\w_{\d}$, so can be neglected. Then, (\ref{system 3}), is equivalent to
\begin{align*}
    \begin{pmatrix}
    u_1 \\
    u_2
    \end{pmatrix}
    - \d^2 \w^2 \xi(\w,k)
    \begin{pmatrix}
    \frac{ \langle \cdot, \phi_1^{(\d)} \rangle \phi_1^{(\d)} }{1 - \d^2 \w^2 \xi(\w,k) \l_{\d}} & 0 \\ 
    0 & \frac{ \langle \cdot, \phi_2^{(\d)} \rangle \phi_2^{(\d)} }{1 - \d^2 \w^2 \xi(\w,k) \l_{\d}}
    \end{pmatrix}
    \begin{pmatrix}
    R^{\d k_0}_{D_2 D_1} u_2 \\
    R^{\d k_0}_{D_1 D_2} u_1
    \end{pmatrix}
    =0.
\end{align*}
This gives us the system
\begin{align*}
    \begin{cases}
        u_1 - \frac{\d^2 \w^2 \xi(\w,k)}{1 - \d^2 \w^2 \xi(\w,k) \l_{\d}} \langle R^{\d k_0}_{D_2 D_1} u_2, \phi_1^{(\d)} \rangle \phi_1^{(\d)}=0,\\
        u_2 - \frac{\d^2 \w^2 \xi(\w,k)}{1 - \d^2 \w^2 \xi(\w,k) \l_{\d}} \langle R^{\d k_0}_{D_1 D_2} u_1, \phi_2^{(\d)} \rangle \phi_2^{(\d)}=0.
    \end{cases}
\end{align*}
Applying the operator $R^{\d k_0}_{D_1 D_2}$ (resp. $R^{\d k_0}_{D_2 D_1}$) to the first (resp. second) equation, and then applying $\langle \cdot, \phi_2^{(\ve)} \rangle$ (resp. $\langle \cdot, \phi_1^{(\ve)} \rangle$), we find that
\begin{align*}
    \begin{cases}
        \langle R^{\d k_0}_{D_1 D_2} u_1, \phi_2^{(\d)} \rangle - \frac{\d^2 \w^2 \xi(\w,k)}{1 - \d^2 \w^2 \xi(\w,k) \l_{\d}} \langle R^{\d k_0}_{D_1 D_2} \phi_1^{(\d)}, \phi_2^{(\d)} \rangle \langle R^{\d k_0}_{D_2 D_1} u_2, \phi_1^{(\d)} \rangle =0,\\
        \langle R^{\d k_0}_{D_2 D_1} u_2, \phi_1^{(\d)} \rangle - \frac{\d^2 \w^2 \xi(\w,k)}{1 - \d^2 \w^2 \xi(\w,k) \l_{\d}} \langle R^{\d k_0}_{D_2 D_1} \phi_2^{(\d)}, \phi_1^{(\d)} \rangle \langle R^{\d k_0}_{D_1 D_2} u_1, \phi_2^{(\d)} \rangle =0.
    \end{cases}
\end{align*}
This system has a solution only if its determinant is zero. That is, if
% \begin{align*}
%     \begin{vmatrix}
%     1 & - \frac{\d^2 \w^2 \xi(\w,k)}{1 - \d^2 \w^2 \xi(\w,k) \l_{\d}} \langle R^{\d k_0}_{D_1 D_2} \phi_1^{(\ve)}, \phi_2^{(\ve)} \rangle \\
%     - \frac{\d^2 \w^2 \xi(\w,k)}{1 - \d^2 \w^2 \xi(\w,k) \l_{\d}} \langle R^{\d k_0}_{D_2 D_1} \phi_2^{(\ve)}, \phi_1^{(\ve)} \rangle & 1
%     \end{vmatrix} = 0 \Leftrightarrow
% \end{align*}
\begin{align*}
    1 - \frac{ \d^4 \w^4 \xi(\w,k)^2 }{\Big( 1 - \d^2 \w^2 \xi(\w,k) \l_{\d} \Big)^2} \langle R^{\d k_0}_{D_1 D_2} \phi_1^{(\d)}, \phi_2^{(\d)} \rangle \langle R^{\d k_0}_{D_2 D_1} \phi_2^{(\d)}, \phi_1^{(\d)} \rangle = 0,
\end{align*}
% \begin{align*}
%     \Big(1 - \d^2 \w^2\xi(\w,k)\l_{\d}\Big)^2 - \d^4 \w^4 \xi(\w,k)^2 \langle R^{\d k_0}_{D_1 D_2} \phi_1^{(\ve)}, \phi_2^{(\ve)} \rangle \langle R^{\d k_0}_{D_2 D_1} \phi_2^{(\ve)}, \phi_1^{(\ve)} \rangle = 0
% \end{align*}
which gives the desired result.
\end{proof}
The following corollary is a direct result of Theorem~\ref{thm:33}.
\begin{corollary}
Let $d=3$. Then, the hybridized subwavelength resonant frequencies are given by
% \begin{align}\label{hybr. freq. d=3}
%     \w = \frac{i\g \pm \sqrt{-\g^2 - 4 \left(-1-\d^2\a\l_{\d} \pm \a \d^2 \sqrt{\langle R^{\d k_0}_{D_1 D_2} \phi_1^{(\d)}, \phi_2^{(\d)} \rangle \langle R^{\d k_0}_{D_2 D_1} \phi_2^{(\d)}, \phi_1^{(\d)} \rangle}\right)(\b + \h k^2)}}{2 \left(-1-\d^2\a\l_{\d} \pm \a \d^2 \sqrt{\langle R^{\d k_0}_{D_1 D_2} \phi_1^{(\d)}, \phi_2^{(\d)} \rangle \langle R^{\d k_0}_{D_2 D_1} \phi_2^{(\d)}, \phi_1^{(\d)} \rangle}\right)},
% \end{align}
\begin{gather}
    \w = \frac{i\g \pm \sqrt{-\g^2 - 4 \Gamma(\b + \h k^2)}}{2 \Gamma}, \label{hybr. freq. d=3} \\ \text{where}\qquad\Gamma=-1-\d^2\a\l_{\d} \pm \a \d^2 \sqrt{\langle R^{\d k_0}_{D_1 D_2} \phi_1^{(\d)}, \phi_2^{(\d)} \rangle \langle R^{\d k_0}_{D_2 D_1} \phi_2^{(\d)}, \phi_1^{(\d)} \rangle}. \nonumber
\end{gather}
where $\phi_i^{\d}$, for $i=1,2$, is the eigenfunction associated to the eigenvalue $\l_{\d}$ of the potential $K^{\d k_0}_{D_i}$ and the $\pm$ in the two expressions do not have to agree.
\end{corollary}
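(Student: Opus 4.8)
The plan is to regard equation~(\ref{hybrid 3}) as a difference of two squares, factor it, and then substitute the explicit permittivity contrast~(\ref{new permittivity}) and solve the resulting quadratic in $\w$. Abbreviating the product of off-diagonal couplings by
\[
Q := \langle R^{\d k_0}_{D_1 D_2} \phi_1^{(\d)}, \phi_2^{(\d)} \rangle \, \langle R^{\d k_0}_{D_2 D_1} \phi_2^{(\d)}, \phi_1^{(\d)} \rangle,
\]
equation~(\ref{hybrid 3}) has the form $A^2 - B^2 = 0$ with $A = 1 - \d^2 \w^2 \xi(\w,k)\l_{\d}$ and $B = \d^2 \w^2 \xi(\w,k)\sqrt{Q}$. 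Factoring $A^2 - B^2 = (A-B)(A+B)$, the equation holds precisely when one of the two factors vanishes, i.e. when
\[
1 - \d^2 \w^2 \xi(\w,k)\big(\l_{\d} \pm \sqrt{Q}\big) = 0 .
\]
This already accounts for the first $\pm$ in the statement, the two sign choices corresponding to the symmetric and antisymmetric hybridized modes.

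Next I would substitute $\xi(\w,k) = \m_0\a / (\b - \w^2 + \h k^2 - i\g\w)$ from~(\ref{new permittivity}) (taking $\m_0 = 1$, absorbed into the material constants throughout this section), clear the denominator, and collect powers of $\w$. This turns the vanishing of each factor into
\[
\Gamma \w^2 - i\g\w + (\b + \h k^2) = 0, \qquad \Gamma = -1 - \d^2\a\l_{\d} \pm \d^2\a\sqrt{Q},
\]
which is exactly the $\Gamma$ appearing in the statement. Reading this as a quadratic in $\w$ with coefficients $a = \Gamma$, $b = -i\g$ and $c = \b + \h k^2$ (so that $b^2 = -\g^2$) and applying the quadratic formula yields
\[
\w = \frac{i\g \pm \sqrt{-\g^2 - 4\Gamma(\b + \h k^2)}}{2\Gamma},
\]
which is the claimed expression~(\ref{hybr. freq. d=3}); the $\pm$ here is the independent branch choice of the two quadratic roots, which is why it need not agree with the sign inside $\Gamma$.

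The argument is purely algebraic, so there is no analytic obstacle; the care required is in the bookkeeping rather than in any hard estimate. The main point I would flag is that~(\ref{hybrid 3}) is not literally a polynomial in $\w$: the quantities $\l_{\d}$, $Q$ and $k_0 = \w\ve_0\m_0$ (as well as the in-medium wavenumber $k$) all depend on $\w$, so freezing them inside $\Gamma$ and reading off the roots via the quadratic formula should be understood as a leading-order, self-consistent characterization rather than a genuine closed form. A secondary subtlety is that $\sqrt{Q}$ is in general complex, since the coupling operators $R^{\d k_0}_{D_i D_j}$ produce complex-valued inner products, so a branch of the square root must be fixed; because both signs are retained this is harmless. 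I would state these two caveats explicitly so that the formula is interpreted in the correct asymptotic sense.
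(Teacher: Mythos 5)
Your proposal is correct and follows essentially the same route as the paper: factor \eqref{hybrid 3} as a difference of squares, substitute the permittivity contrast \eqref{new permittivity}, clear the denominator to obtain the quadratic $\Gamma\w^2 - i\g\w + \b + \h k^2 = 0$, and apply the quadratic formula (the paper likewise drops the factor of $\m_0$ when forming $\Gamma$). Your closing caveats about the $\w$-dependence of $\l_\d$, $Q$ and $k_0$ and about the branch of $\sqrt{Q}$ are sensible but not part of the paper's argument.
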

\begin{proof}
We introduce the notation $\mathbb{K} := \langle R^{\d k_0}_{D_1 D_2} \phi_1^{(\d)}, \phi_2^{(\d)} \rangle$ and $\mathbb{M} := \langle R^{\d k_0}_{D_2 D_1} \phi_2^{(\d)}, \phi_1^{(\d)} \rangle$. Then, (\ref{hybrid 3}) becomes
\begin{align*}
    \Big(1 - \d^2 \w^2\xi(\w,k)\l_{\d}&\Big)^2 - \d^4 \w^4 \xi(\w,k)^2 \mathbb{K} \mathbb{M} = 0 \  \Leftrightarrow \ 1 - \d^2 \w^2\xi(\w,k)\l_{\d} \pm \d^2 \w^2 \xi(\w,k) \sqrt{\mathbb{K}\mathbb{M}} = 0 \ \Leftrightarrow \\
    %&1 - \frac{\d^2 \w^2 \l_{\d} \a}{\b - \w^2 + \h k^2 - i \g \w} \pm \frac{ \a \d^2 \w^2 \sqrt{\mathbb{S}\mathbb{F}}}{\b - \w^2 + \h k^2 - i \g \w} = 0 \ \Leftrightarrow \\
    %&\b - \w^2 + \h k^2 - i \g \w - \d^2 \w^2 \l_{\d} \a \pm \a \d^2 \w^2 \sqrt{\mathbb{S}\mathbb{F}} = 0 \ \Leftrightarrow\\
    &\left( -1-\d^2\a\l_{\d} \pm \a \d^2 \sqrt{\mathbb{K}\mathbb{M}} \right) \w^2 - i\g\w + \b + \h k^2 = 0,
\end{align*}
and the roots to this second degree polynomial are given by
\begin{align*}
    \w = \frac{i\g \pm \sqrt{-\g^2 - 4 \Gamma(\b+\h k^2)}}{2 \Gamma} \quad \text{where}\quad \Gamma=-1-\d^2\a\l_{\d} \pm \a \d^2 \sqrt{\mathbb{K}\mathbb{M}},
\end{align*}
with the two $\pm$ not necessarily agreeing. Finally, substituting the expressions for $\mathbb{K}$ and $\mathbb{M}$, we obtain the result.
\end{proof}

\subsection{Two Dimensions}

Let us move on to the case of dimension $d=2$. For simplicity, we again consider two identical halide perovskite resontators $D_1$ and $D_2$, made from the same material with permittivity given by the formula (\ref{new permittivity}). We define the operators $K^{\d k_0}_{D_i}$ and $R^{\d k_0}_{D_i D_j}$, for $i,j=1,2$, as in Definition \ref{def:KR} and we continue by defining the following integral operators.
%\begin{definition}
%We define the integral operators $K^{\d k_0}_{D_i}$ and $R^{\d k_0}_{D_i D_j}$, for $i,j=1,2$, by
%\begin{align*}
%    K^{\d k_0}_{D_i}: u\Big|_{D_i} \in L^2(D_i) \longmapsto - \int_{D_i} G(x-y,\d k_0) u(y) \upd y \Big|_{D_i} \in L^2(D_i)
%\end{align*}
%and
%\begin{align*}
%    R^{\d k_0}_{D_i D_j}: u\Big|_{D_i} \in L^2(D_i) \longmapsto - \int_{D_i} G(x-y,\d k_0) u(y) \upd y \Big|_{D_j} \in L^2(D_j)
%\end{align*}
%\end{definition}
\begin{definition}
We define the integral operators $M^{\d k_0}_{D_i}$ and $N^{\d k_0}_{D_i D_j}$ for $i,j=1,2$ as
\begin{align*}
    M^{\d k_0}_{D_i} := \hat{K}^{\d k_0}_{D_i} + K^{(0)}_{D_i} + (\d k_0)^2 \log(\d k_0 \hat{\g}) K^{(1)}_{D_i},
\end{align*}
and
\begin{align*}
    N^{\d k_0}_{D_i D_j} := \hat{K}^{\d k_0}_{D_i D_j} + R^{(0)}_{D_i D_j} + (\d k_0)^2 \log(\d k_0 \hat{\g}) R^{(1)}_{D_i D_j},
\end{align*}
where
\begin{align*}
    K^{(0)}_{D_i}: u\Big|_{D_i} \in L^2(D_i) &\longmapsto \int_{D_i} G(x-y,0)u(y)\upd y \Big|_{D_i} \in L^{2}(D_i),\\
    \hat{K}^{\d k_0}_{D_i}: u\Big|_{D_i} \in L^2(D_i) &\longmapsto \log(\hat{\g} \d k_0) \hat{K}_{D_i}[u]\Big|_{D_i}\in L^{2}(D_i), \\
    \hat{K}_{D_i}: u\Big|_{D_i} \in L^2(D_i) &\longmapsto - \frac{1}{2\pi} \int_{D_i} u(y) \upd y \Big|_{D_i} \in L^{2}(D_i),\\
    K^{(1)}_{D_i}: u\Big|_{D_i} \in L^2(D_i) &\longmapsto \int_{D_i} \frac{\partial}{\partial k} G(x-y,k)\Big|_{k=0} u(y) \upd y \Big|_{D_i} \in L^2(D_i),
\end{align*}
and
\begin{align*}
    R^{(0)}_{D_i D_j}: u\Big|_{D_i} \in L^2(D_i) &\longmapsto \int_{D_i} G(x-y,0)u(y)\upd y \Big|_{D_j} \in L^{2}(D_j),\\
    \hat{K}^{\d k_0}_{D_i D_j}: u\Big|_{D_i} \in L^2(D_i) &\longmapsto \log(\hat{\g} \d k_0) \hat{K}_{D_i D_j}[u]\Big|_{D_j}\in L^{2}(D_j), \\
    \hat{K}_{D_i D_j}: u\Big|_{D_i} \in L^2(D_i) &\longmapsto - \frac{1}{2\pi} \int_{D_i} u(y) \upd y \Big|_{D_j} \in L^{2}(D_j),\\
    R^{(1)}_{D_i D_j}: u\Big|_{D_i} \in L^2(D_i) &\longmapsto \int_{D_i} \frac{\partial}{\partial k} G(x-y,k)\Big|_{k=0} u(y) \upd y \Big|_{D_j} \in L^2(D_j).
\end{align*}
\end{definition}
We observe the following result.
\begin{proposition}
For the integral operators $K^{\d k_0}_{D_i}$ and $R^{\d k_0}_{D_i D_j}$, we can write
\begin{align} \label{M}
    K^{\d k_0}_{D_i} = M^{\d k_0}_{D_i} + O\Big(\d^4 \log(\d)\Big), \quad
    \text{and} \quad
    R^{\d k_0}_{D_i D_j} = N^{\d k_0}_{D_i D_j}+ O\Big(\d^4 \log(\d)\Big),
\end{align}
as $\delta\to0$ and with $k_0$ fixed.
\end{proposition}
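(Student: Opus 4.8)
The plan is to derive both expansions from a single ingredient: the small-argument asymptotics of the two-dimensional Green's function $G(x-y,\d k_0)=-\tfrac{i}{4}H_0^{(1)}(\d k_0|x-y|)$. Since $k_0$ is fixed and the domains $D_1,D_2$ are bounded, the argument $\d k_0|x-y|$ tends to $0$ uniformly as $\d\to0$, which is exactly the regime in which the single-resonator expansion \eqref{expansion d=2} was obtained. Both $K^{\d k_0}_{D_i}$ and $R^{\d k_0}_{D_iD_j}$ carry this same kernel; they differ only in the domain over which the output is evaluated (the source domain being $D_i$ in both cases). The first expansion is therefore essentially a relabelling of \eqref{expansion d=2}, and the genuinely new content is the off-diagonal operator.

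For the diagonal operator I would simply invoke \eqref{expansion d=2} with $D$ replaced by $D_i$, giving $K^{\d k_0}_{D_i}=\log(\d k_0\hat{\g})K^{(-1)}_{D_i}+K^{(0)}_{D_i}+(\d k_0)^2\log(\hat{\g}\d k_0)K^{(1)}_{D_i}+O(\d^4\log\d)$ in the $L^2(D_i)\to L^2(D_i)$ operator norm. It then remains only to check, term by term, that these three leading operators coincide with the constituents of $M^{\d k_0}_{D_i}$ from the preceding Definition: the constant-kernel piece $\log(\hat{\g}\d k_0)\hat{K}_{D_i}$ is the $K^{(-1)}$-term, while the Newtonian and first-derivative pieces match $K^{(0)}_{D_i}$ and $(\d k_0)^2\log(\d k_0\hat{\g})K^{(1)}_{D_i}$ respectively, once the sign and constant conventions of the definitions are reconciled. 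This yields $K^{\d k_0}_{D_i}=M^{\d k_0}_{D_i}+O(\d^4\log\d)$.

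For the off-diagonal operator I would expand the \emph{same} kernel, now with $y\in D_i$ and $x\in D_j$ for $i\ne j$. Writing the Hankel expansion as $-\tfrac{i}{4}H_0^{(1)}(z)=\tfrac{1}{2\pi}\log z+\text{const}+O(z^2\log z)$ with $z=\d k_0|x-y|$, and splitting $\log(\d k_0|x-y|)=\log(\d k_0\hat{\g})+\log|x-y|+\text{const}$, I would collect the part constant in $(x-y)$ into $\hat{K}^{\d k_0}_{D_iD_j}$, the $\log|x-y|$ part (together with the order-one smooth contribution) into $R^{(0)}_{D_iD_j}$, and the $z^2\log z$ part into $(\d k_0)^2\log(\d k_0\hat{\g})R^{(1)}_{D_iD_j}$, which is precisely the decomposition defining $N^{\d k_0}_{D_iD_j}$. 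The decisive simplification is that $\mathrm{dist}(D_i,D_j)>0$, so $|x-y|$ is bounded below: the kernel, and in particular the truncation remainder, is smooth and uniformly bounded on $D_i\times D_j$, of the same order as the error term in \eqref{expansion d=2}. The remainder operator is then Hilbert--Schmidt, and its operator norm is controlled by the supremum of its kernel, giving the claimed $O(\d^4\log\d)$ bound.

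I expect the only delicate point — and hence the \emph{hard part} — to be the uniform control of the remainder in operator norm. In the diagonal case this is inherited directly from \eqref{expansion d=2}, where the integrable $\log|x-y|$ singularity is absorbed into $K^{(0)}_{D_i}$; in the off-diagonal case the positive separation of the two resonators removes the singularity altogether and collapses the estimate to a crude $L^\infty$ bound on the kernel. I would therefore invest essentially all of the rigour in writing the small-$z$ Hankel expansion with an explicit remainder and checking its uniformity over the bounded (and, in the second case, mutually disjoint) domains; the term-by-term identification with the operators of the Definition is then routine.
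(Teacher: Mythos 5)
Your proposal is correct and follows essentially the same route as the paper: both expand the kernel $G(x-y,\d k_0)$ via the small-argument Hankel asymptotics already used for \eqref{expansion d=2} and identify the resulting terms with the constituents of $M^{\d k_0}_{D_i}$ and $N^{\d k_0}_{D_iD_j}$. Your additional remarks on uniform control of the remainder (and the simplification coming from $\mathrm{dist}(D_i,D_j)>0$ in the off-diagonal case) supply rigour that the paper leaves implicit, but do not change the argument.
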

\begin{proof}
The proof is a direct result of the expansion of the Green's function in dimension $d=2$. Indeed, for $u|_{D_i} \in L^2(D_i)$, we observe that
\begin{align*}
    K^{\d k_0}_{D_i}[u](x) &= - \int_{D_i} G(x-y,\d k_0) u(y) \upd y \Big|_{D_i}\\
    &= -\int_{D_i} \Big( \log(\hat{\g} \d k_0)  \frac{1}{2\pi} + G(x-y,0) + (\d k_0)^2 \log(\d k_0 \hat{\g}) \frac{\partial}{\partial k} G(x-y,k)\Big|_{k=0}\\
    & \ \ \ \ \ \ \ \ \ \ \ \ \  \ + O\Big(\d^4 \log(\d)\Big) \Big) u(y) \upd y \Big|_{D_i}\\
    &= \Big(\hat{K}^{\d k_0}_{D_i} + K^{(0)}_{D_i} + (\d k_0)^2 \log(\d k_0 \hat{\g}) K^{(1)}_{D_i}\Big)[u](x) + O\Big(\d^4 \log(\d)\Big)\\
    &= M^{\d k_0}_{D_i}[u](x) + O\Big(\d^4 \log(\d)\Big).
\end{align*}
Similarly, for $u|_{D_i} \in L^2(D_i)$,
\begin{align*}
    R^{\d k_0}_{D_i D_j}[u](x) &= - \int_{D_i} G(x-y,\d k_0) u(y) \upd y \Big|_{D_j} \in L^2(D_j)\\
    &= -\int_{D_i} \Big( \log(\hat{\g} \d k_0)  \frac{1}{2\pi} + G(x-y,0) + (\d k_0)^2 \log(\d k_0 \hat{\g}) \frac{\partial}{\partial k} G(x-y,k)\Big|_{k=0}\\
    & \ \ \ \ \ \ \ \ \ \ \ \ \  \ + O\Big(\d^4 \log(\d)\Big) \Big) u(y) \upd y \Big|_{D_j}\\
    &= \Big( \hat{K}^{\d k_0}_{D_i D_j} + R^{(0)}_{D_i D_j} + (\d k_0)^2 \log(\d k_0 \hat{\g}) R^{(1)}_{D_i D_j} \Big)[u](x) + O\Big(\d^4 \log(\d)\Big)\\
    &= N^{\d k_0}_{D_i D_j}[u](x)+ O(\d^4 \log(\d)).
\end{align*}
% which concludes the proof.
\end{proof}
Therefore, our problem is to determine the frequencies $\w$ and the associated wavenumber $k$, for which the following holds:
\begin{align} \label{Chomp}
    \begin{pmatrix}
    I - \d^2 \w^2 \xi(\w,k)K^{\d k_0}_{D_1}  & - \d^2 \w^2 \xi(\w,k) R^{\d k_0}_{D_2 D_1}\\
    - \d^2 \w^2 \xi(\w,k) R^{\d k_0}_{D_1 D_2} & I - \d^2 \w^2 \xi(\w,k)K^{\d k_0}_{D_2}
    \end{pmatrix}
    \begin{pmatrix}
    u_1 \\
    u_2
    \end{pmatrix}
    =
    \begin{pmatrix}
    0\\
    0
    \end{pmatrix}
\end{align}
for nontrivial $u:=(u_1,u_2)$, such that $u|_{D_i} \in L^2(D_i)$, for $i=1,2$.
\begin{proposition}
Let $d=2$. Then, the hybridized subwavelength resonant frequencies $\w$ satisfy
\begin{align} \label{hybrid 2}
\begin{split}
    1 - \d^2 \w^2 \xi(\w,k) &\Big( -\frac{|D_1|}{2\pi} \log(\hat{\g} \d k_0) (1 \pm 1) + \langle K^{(0)}_{D_1}[\hat{\mathbb{I}}_{D_1}], \hat{\mathbb{I}}_{D_1} \rangle + (\d k_0)^2 \log(\d k_0 \hat{\g}) \langle K^{(1)}_{D_1}[\hat{\mathbb{I}}_{D_1}], \hat{\mathbb{I}}_{D_1} \rangle \\
    &\pm \langle R^{(0)}_{D_2 D_1}[\hat{\mathbb{I}}_{D_2}], \hat{\mathbb{I}}_{D_1} \rangle \pm (\d k_0)^2 \log(\d k_0 \hat{\g}) \langle R^{(1)}_{D_2 D_1}[\hat{\mathbb{I}}_{D_2}], \hat{\mathbb{I}}_{D_1} \rangle \Big) = 0,
\end{split}
\end{align}
where the $\pm$ symbols coincide.
\end{proposition}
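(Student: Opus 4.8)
The plan is to reduce the block system \eqref{Chomp} to a scalar resonance condition by exploiting the symmetry of the identical pair, in the same spirit as the single $d=2$ resonator computation and the three-dimensional dimer of Theorem~\ref{thm:33}. First I would use the preceding expansions \eqref{M} to replace $K^{\d k_0}_{D_i}$ by $M^{\d k_0}_{D_i}$ and $R^{\d k_0}_{D_iD_j}$ by $N^{\d k_0}_{D_iD_j}$, which alters \eqref{Chomp} only by $O(\d^4\log\d)$. The resonance condition then becomes that the block operator
\[
\mathcal{M}:=\begin{pmatrix} M^{\d k_0}_{D_1} & N^{\d k_0}_{D_2D_1}\\[2pt] N^{\d k_0}_{D_1D_2} & M^{\d k_0}_{D_2}\end{pmatrix}
\]
possesses an eigenvalue $\n(\d)$ with $1-\d^2\w^2\xi(\w,k)\n(\d)=0$ to this order.

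Next I would locate the two singular eigenvalues of $\mathcal{M}$ by the ansatz used in the last single-resonator proposition. The dominant, $\log(\hat{\g}\d k_0)$-divergent part of each diagonal block is $\hat{K}^{\d k_0}_{D_i}$, which is rank one with range the constants; its singular eigenvalue is carried by the normalised indicator $\hat{\mathbb{I}}_{D_i}=\mathbb{I}_{D_i}/\sqrt{|D_i|}$, while mean-zero functions remain at $O(1)$. Since $D_1$ and $D_2$ are identical, the leading eigenvectors of $\mathcal{M}$ are the symmetric and antisymmetric combinations $\Psi_\pm=(\hat{\mathbb{I}}_{D_1},\pm\hat{\mathbb{I}}_{D_2})$. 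Applying $\mathcal{M}$ to $\Psi_\pm$ and testing the first row against $\hat{\mathbb{I}}_{D_1}$ gives
\[
\n_\pm(\d)=\langle M^{\d k_0}_{D_1}\hat{\mathbb{I}}_{D_1},\hat{\mathbb{I}}_{D_1}\rangle\pm\langle N^{\d k_0}_{D_2D_1}\hat{\mathbb{I}}_{D_2},\hat{\mathbb{I}}_{D_1}\rangle,
\]
while testing the second row against $\hat{\mathbb{I}}_{D_2}$ produces the same scalar identity, which is precisely what forces the two $\pm$ in the statement to coincide.

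Then I would expand the two inner products from the definitions of $M^{\d k_0}_{D_i}$ and $N^{\d k_0}_{D_iD_j}$. For the diagonal contribution, $\hat{K}_{D_1}[\hat{\mathbb{I}}_{D_1}]=-\sqrt{|D_1|}/(2\pi)$ is constant, so $\langle\hat{K}^{\d k_0}_{D_1}\hat{\mathbb{I}}_{D_1},\hat{\mathbb{I}}_{D_1}\rangle=-\tfrac{|D_1|}{2\pi}\log(\hat{\g}\d k_0)$, leaving the $K^{(0)}_{D_1}$ and $K^{(1)}_{D_1}$ quadratic forms. For the off-diagonal contribution, $\hat{K}_{D_2D_1}[\hat{\mathbb{I}}_{D_2}]=-\sqrt{|D_2|}/(2\pi)$ is also constant, so using $|D_1|=|D_2|$ one obtains $\langle\hat{K}^{\d k_0}_{D_2D_1}\hat{\mathbb{I}}_{D_2},\hat{\mathbb{I}}_{D_1}\rangle=-\tfrac{|D_1|}{2\pi}\log(\hat{\g}\d k_0)$, leaving the $R^{(0)}_{D_2D_1}$ and $R^{(1)}_{D_2D_1}$ forms. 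Combining the two logarithmic terms yields the factor $(1\pm1)$, and substituting $\n_\pm$ into $1-\d^2\w^2\xi(\w,k)\n_\pm=0$ gives exactly \eqref{hybrid 2}.

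The step I expect to be the main obstacle is justifying the ansatz together with the neglect of remainders. I would need to argue that, to the stated order, the two $\log$-divergent eigenvalues of $\mathcal{M}$ are captured by projecting onto $\mathrm{span}\{(\hat{\mathbb{I}}_{D_1},0),(0,\hat{\mathbb{I}}_{D_2})\}$, i.e.\ that the mean-zero corrections to the true eigenfunctions and the pole-pencil remainders (holomorphic near $\w_\d$, exactly as in Theorem~\ref{thm:33}) enter only at higher order. This relies on the rank-one, constant-valued structure of $\hat{K}_{D_i}$ and $\hat{K}_{D_iD_j}$, and on the inter-resonator kernel $G(x-y,\d k_0)$ being smooth on $D_1\times D_2$ because the particles are separated, so that the coupling operators $N^{\d k_0}_{D_iD_j}$ are well behaved.
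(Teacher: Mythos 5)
Your proposal is correct and lands on exactly the paper's resonance condition, but it gets there by a slightly different decomposition. The paper does not diagonalize the block operator directly: it first inverts the diagonal blocks, applies a pole--pencil decomposition to each $\big(I-\d^2\w^2\xi(\w,k)M^{\d k_0}_{D_i}\big)^{-1}$ with principal eigenfunction $\Psi_{D_i}(\d)=\hat{\mathbb{I}}_{D_i}+O(1/\log\d)$ and eigenvalue $\n(\d)$, reduces to a $2\times2$ scalar system in the quantities $\langle N^{\d k_0}_{D_1D_2}u_1,\hat{\mathbb{I}}_{D_2}\rangle$ and $\langle N^{\d k_0}_{D_2D_1}u_2,\hat{\mathbb{I}}_{D_1}\rangle$, sets its determinant to zero, and only then extracts the $\pm$ by taking a square root using the symmetry $\langle N^{\d k_0}_{D_1D_2}\hat{\mathbb{I}}_{D_1},\hat{\mathbb{I}}_{D_2}\rangle=\langle N^{\d k_0}_{D_2D_1}\hat{\mathbb{I}}_{D_2},\hat{\mathbb{I}}_{D_1}\rangle$. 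Your route --- testing the block operator directly against the symmetric and antisymmetric vectors $(\hat{\mathbb{I}}_{D_1},\pm\hat{\mathbb{I}}_{D_2})$ to read off $\n_\pm(\d)=\langle M^{\d k_0}_{D_1}\hat{\mathbb{I}}_{D_1},\hat{\mathbb{I}}_{D_1}\rangle\pm\langle N^{\d k_0}_{D_2D_1}\hat{\mathbb{I}}_{D_2},\hat{\mathbb{I}}_{D_1}\rangle$ --- is algebraically equivalent (the determinant factorizes over these two modes) and has the advantage of making the monopole/dipole mode structure explicit from the outset; the paper's determinant route mirrors its Theorem~\ref{thm:33} and avoids positing the eigenvectors in advance. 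Your expansion of the inner products, including the computation $\langle\hat{K}^{\d k_0}_{D_2D_1}\hat{\mathbb{I}}_{D_2},\hat{\mathbb{I}}_{D_1}\rangle=-\tfrac{|D_1|}{2\pi}\log(\hat{\g}\d k_0)$ that produces the $(1\pm1)$ factor, matches the paper's. As for the obstacle you flag, the paper does not justify the ansatz any more rigorously than you do: it likewise asserts $\Psi_{D_i}(\d)=\hat{\mathbb{I}}_{D_i}+O(1/\log\d)$ and drops the pole--pencil remainders without estimate, so your proposal is at the same level of rigor as the published argument.
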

\begin{proof}
The first thing that we do is to observe that, by applying the expansion (\ref{M}) to (\ref{Chomp}), we reach the problem
\begin{align*}
     \begin{pmatrix}
    I - \d^2 \w^2 \xi(\w,k) M^{\d k_0}_{D_1}  & - \d^2 \w^2 \xi(\w,k) N^{\d k_0}_{D_2 D_1}\\
    - \d^2 \w^2 \xi(\w,k) N^{\d k_0}_{D_1 D_2} & I - \d^2 \w^2 \xi(\w,k) M^{\d k_0}_{D_2}
    \end{pmatrix}
    \begin{pmatrix}
    u_1 \\
    u_2
    \end{pmatrix}
    =
    \begin{pmatrix}
    O\Big(\d^4 \log(\d)\Big)\\
    O\Big(\d^4 \log(\d)\Big)
    \end{pmatrix}.
\end{align*}
We note that $|D_1| = |D_2|$. Then, using the symmetries of the dimer, let us denote
\begin{align*}
    \hat{\n}(\d) :&= - \frac{|D_1|}{2\pi} \log(\d k_0 \hat{\g}) + \langle K^{(0)}_{D_1}[\hat{\mathbb{I}}_{D_1}], \hat{\mathbb{I}}_{D_1} \rangle + (\d k_0)^2 \log(\d k_0 \hat{\g}) \langle K^{(1)}_{D_1}[\hat{\mathbb{I}}_{D_1}], \hat{\mathbb{I}}_{D_1} \rangle\\
    &= - \frac{|D_2|}{2\pi} \log(\d k_0 \hat{\g}) + \langle K^{(0)}_{D_2}[\hat{\mathbb{I}}_{D_2}], \hat{\mathbb{I}}_{D_2} \rangle + (\d k_0)^2 \log(\d k_0 \hat{\g}) \langle K^{(1)}_{D_2}[\hat{\mathbb{I}}_{D_2}], \hat{\mathbb{I}}_{D_2} \rangle,
\end{align*}
and
\begin{align*}
    \hat{\h} :&= \langle N^{\d k_0}_{D_1 D_2} [\hat{\mathbb{I}}_{D_1}], \hat{\mathbb{I}}_{D_2} \rangle
    = \langle N^{\d k_0}_{D_2 D_1} [\hat{\mathbb{I}}_{D_2}], \hat{\mathbb{I}}_{D_1} \rangle.
\end{align*}
In addition, we have that
\begin{align*}
    \hat{K}^{\d k_0}_{D_i D_j}[\hat{\mathbb{I}}_{D_i}] = \hat{K}^{\d k_0}_{ D_j}[\hat{\mathbb{I}}_{D_j}].
\end{align*}
Now, we define the quantity $\n(\d)$ to be the eigenvalues of the operator $M^{\d k_0}_{D_i}$, that is,
\begin{align*}
    \n(\d) = \langle M^{\d k_0}_{D_1}[\Psi_{D_1}], \Psi_{D_1} \rangle = \langle M^{\d k_0}_{D_2}[\Psi_{D_2}], \Psi_{D_2} \rangle,
\end{align*}
for the eigenfunctions $\Psi_{D_i}(\d) \in L^2(D_i)$, $\Psi_{D_i}(\d) = \hat{\mathbb{I}}_{D_i} + O\left( \frac{1}{\log(d)} \right)$. 
Thus, we have that (\ref{Chomp}) is equivalent to
\begin{align}\label{Chomp 2}
    \begin{pmatrix}
    u_1\\
    u_2
    \end{pmatrix}
    - \d^2 \w^2 \xi(\w,k)
    \begin{pmatrix}
    \Big( I - \d^2 \w^2 \xi(\w,k)M^{\d k_0}_{D_1} \Big)^{-1} & 0 \\
    0 & \Big( I - \d^2 \w^2 \xi(\w,k)M^{\d k_0}_{D_2} \Big)^{-1}
    \end{pmatrix}
    \begin{pmatrix}
    N^{\d k_0}_{D_2 D_1} u_2\\
    N^{\d k_0}_{D_1 D_2} u_1
    \end{pmatrix}
    =0.
\end{align}
Applying a pole-pencil decomposition, we observe that
\begin{align*}
    \Big( I - \d^2 \w^2 \xi(\w,k)M^{\d k_0}_{D_i} \Big)^{-1}[\cdot] = \frac{\langle \cdot, \hat{\mathbb{I}}_{D_i} \rangle \hat{\mathbb{I}}_{D_i}}{1 - \d^2 \w^2 \xi(\w,k) \n(\d)} + R[\w](\cdot),
\end{align*}
where the remainder terms $R[\w](\cdot)$ can be neglected. Hence, (\ref{Chomp 2}) is equivalent to
\begin{align*}
    \begin{cases}
        u_1 - \d^2 \w^2 \xi(\w,k) \frac{ \langle N^{\d k_0}_{D_2 D_1} u_2, \hat{\mathbb{I}}_{D_1} \rangle \hat{\mathbb{I}}_{D_1}}{ 1 - \d^2 \w^2 \xi(\w,k) \n(\d) } = 0, \\
        u_2 - \d^2 \w^2 \xi(\w,k) \frac{ \langle N^{\d k_0}_{D_1 D_2} u_1, \hat{\mathbb{I}}_{D_2} \rangle \hat{\mathbb{I}}_{D_2}}{ 1 - \d^2 \w^2 \xi(\w,k) \n(\d) } = 0,
    \end{cases}
\end{align*}
which is equivalent to
\begin{align*}
\begin{cases}
    \langle N^{\d k_0}_{D_1 D_2} u_1, \hat{\mathbb{I}}_{D_2} \rangle - \frac{\d^2 \w^2 \xi(\w,k)}{1 - \d^2 \w^2 \xi(\w,k)\n(\d)} \langle N^{\d k_0}_{D_1 D_2} \hat{\mathbb{I}}_{D_1}, \hat{\mathbb{I}}_{D_2} \rangle \langle N^{\d k_0}_{D_2 D_1} u_2, \hat{\mathbb{I}}_{D_1} \rangle = 0,\\
    \langle N^{\d k_0}_{D_2 D_1} u_2, \hat{\mathbb{I}}_{D_1} \rangle - \frac{\d^2 \w^2 \xi(\w,k)}{1 - \d^2 \w^2 \xi(\w,k)\n(\d)} \langle N^{\d k_0}_{D_2 D_1} \hat{\mathbb{I}}_{D_2}, \hat{\mathbb{I}}_{D_1} \rangle \langle N^{\d k_0}_{D_1 D_2} u_1, \hat{\mathbb{I}}_{D_2} \rangle = 0.
\end{cases}
\end{align*}
For this to have a solution, we need the determinant of the matrix induced by this system to be zero. 
% \begin{align*}
%     \begin{vmatrix}
%     1 & - \frac{\d^2 \w^2 \xi(\w,k)}{1 - \d^2 \w^2 \xi(\w,k)\n(\d)} \langle N^{\d k_0}_{D_1 D_2} \hat{\mathbb{I}}_{D_1}, \hat{\mathbb{I}}_{D_2} \rangle \\
%     - \frac{\d^2 \w^2 \xi(\w,k)}{1 - \d^2 \w^2 \xi(\w,k)\n(\d)} \langle N^{\d k_0}_{D_2 D_1} \hat{\mathbb{I}}_{D_2}, \hat{\mathbb{I}}_{D_1} \rangle & 1
%     \end{vmatrix}
%     = 0 
% \end{align*}
This gives
\begin{align*}
    &1 - \frac{\d^4 \w^4 \xi(\w,k)^2}{( 1 - \d^2 \w^2 \xi(\w,k)\n(\d) )^2} \langle N^{\d k_0}_{D_1 D_2} \hat{\mathbb{I}}_{D_1}, \hat{\mathbb{I}}_{D_2} \rangle \langle N^{\d k_0}_{D_2 D_1} \hat{\mathbb{I}}_{D_2}, \hat{\mathbb{I}}_{D_1} \rangle = 0.
\end{align*}
Given the symmetry of our setting, we have that
$$
\langle N^{\d k_0}_{D_1 D_2} \hat{\mathbb{I}}_{D_1}, \hat{\mathbb{I}}_{D_2} \rangle = \langle N^{\d k_0}_{D_2 D_1} \hat{\mathbb{I}}_{D_2}, \hat{\mathbb{I}}_{D_1} \rangle,
$$
and hence, we get
\begin{align*}
    %( 1 - &\d^2 \w^2 \xi(\w,k)\n(\d) )^2 - \d^4 \w^4 \xi(\w,k)^2 \langle N^{\d k_0}_{D_1 D_2} \hat{\mathbb{I}}_{D_1}, \hat{\mathbb{I}}_{D_2} \rangle \langle N^{\d k_0}_{D_2 D_1} \hat{\mathbb{I}}_{D_2}, \hat{\mathbb{I}}_{D_1} \rangle = 0 \Leftrightarrow \\
    1 - \d^2 \w^2 \xi(\w,k)\n(\d) \pm \d^2 \w^2 \xi(\w,k)\langle N^{\d k_0}_{D_1 D_2} \hat{\mathbb{I}}_{D_1}, \hat{\mathbb{I}}_{D_2} \rangle = 0.
    %&\ \ \ \ \ \ \ \ \ \ \ \ 1 - \d^2 \w^2 \xi(\w,k) \Big( \n(\d) \pm \h(\d) \Big) = 0 \Leftrightarrow \\
    %1 - \d^2 \w^2 \xi(\w,k) &\Big( - \frac{\d^2}{2} \log(\d k_0 \hat{\g}) + \langle K^{(0)}_{D_1}[\hat{\mathbb{I}}_{D_1}], \hat{\mathbb{I}}_{D_1} \rangle + (\d k_0)^2 \log(\d k_0 \hat{\g}) \langle K^{(1)}_{D_1}[\hat{\mathbb{I}}_{D_1}], \hat{\mathbb{I}}_{D_1} \rangle\\
    %&\pm \Big( \langle \hat{K}^{\d k_0}_{D_2 D_1}[\hat{\mathbb{I}}_{D_2}], \hat{\mathbb{I}}_{D_1} \rangle + \langle R^{(0)}_{D_2 D_1} [\hat{\mathbb{I}}_{D_2}], \hat{\mathbb{I}}_{D_1} \rangle + (\d k_0)^2 \log(\d k_0 \hat{\g}) \langle R^{(1)}_{D_2 D_1} [\hat{\mathbb{I}}_{D_2}], \hat{\mathbb{I}}_{D_1} \rangle \Big) \Big) = 0 \Leftrightarrow \\
\end{align*}
This is equivalent to
\begin{align*}
    1 - \d^2 \w^2 \xi(\w,k) \Big( - \frac{|D_1|}{2\pi} \log(\d k_0 \hat{\g})(1\pm1) + &\langle K^{(0)}_{D_1}[\hat{\mathbb{I}}_{D_1}], \hat{\mathbb{I}}_{D_1} \rangle + (\d k_0)^2 \log(\d k_0 \hat{\g}) \langle K^{(1)}_{D_1}[\hat{\mathbb{I}}_{D_1}], \hat{\mathbb{I}}_{D_1} \rangle\\
    & \pm \langle R^{(0)}_{D_2 D_1} [\hat{\mathbb{I}}_{D_2}], \hat{\mathbb{I}}_{D_1} \rangle \pm (\d k_0)^2 \log(\d k_0 \hat{\g}) \langle R^{(1)}_{D_2 D_1} [\hat{\mathbb{I}}_{D_2}], \hat{\mathbb{I}}_{D_1} \rangle \Big) = 0,
\end{align*}
which is the desired result.
\end{proof}

\section{Example: Circular Resonators}

\begin{figure}
\begin{center}
\includegraphics[trim=0 0 0 1.2cm, clip,scale=0.8]{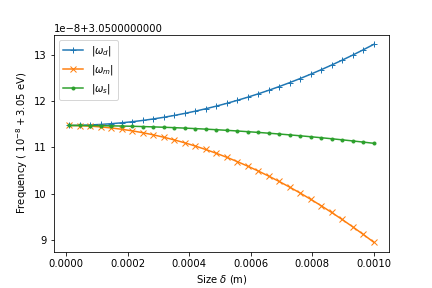}
\end{center}
\caption{Behaviour of the subwavelength resonances for small circular nano-particles of radius $\d$. The resonant frequency $\w_s$ of a single circular methylammonium lead chloride nano-particle is shown. For two circular nano-particles, made from the same material, we see how the hybridization causes the frequencies $\w_d$ (dipole) and $\w_m$ (monopole) to shift either side of $\w_s$.} \label{fig:numerics}
\end{figure}

In this section, we illustrate our results for the case of two-dimensional circular halide perovskite resonators. We can find the resonant frequencies of a single particle $\w_s$ by solving (\ref{Headshot 2}). Similarly, the hybridized resonant frequencies of a pair of circular resonators can be found by solving (\ref{hybrid 2}). The two solutions of (\ref{hybrid 2}) are denoted by $\w_m$ and $\w_d$, to describe their monopolar and dipolar characteristics. As is expected from other hybridized systems, it holds that $\w_m<\w_d$. We plot these three frequencies as a function of the particle size $\d$ in Figure~\ref{fig:numerics}. We use the parameter values from \cite{MFTHBPZK} to model resonators made from methylammonium lead chloride ($\text{MAPbCl}_3$), which is a popular halide perovskite.

The first thing we observe from Figure~\ref{fig:numerics} is that in the $\d \to 0$ limit, the frequencies coincide. This is because the nano-particles behave as isolated, identical resonators when $\d$ is very small. Then, as $\d$ increases the single-particle resonance $\w_s$ always stays between the monopole and dipole frequencies of the hybridized case. The phenomenon of the dipole frequency $\w_d$ being shifted above $\w_s$ and the monopole frequency $\w_m$ being shifted below $\w_s$ is a typical behaviour of hybridized resonator systems, see e.g. \cite{ADH}.

\section{Conclusion}

We have established a new mathematical model for halide perovskite resonators. This is a significant development of the existing theory of subwavelength resonators \cite{ADFMS, ADH}, as it generalizes the techniques to dispersive settings where the permittivity of the material depends on both the frequency and the wavenumber. Given the rapidly growing use of halide perovskites in engineering applications, this theory will have a significant impact on the design of advanced devices \cite{JKM,KMBKL}. The integral methods used here are able to describe a very broad class of resonator shapes, so are an ideal approach for studying complex geometries, such as the biomimetic eye developed by \cite{GPLLZZSQKJ}. 

\section*{Data availability}

There are no associated data, arising from this work. The only data used in this work are the material parameter values for methylammonium lead chloride, which are stated in \cite{MFTHBPZK}.

\appendix
\section{Appendix}

\subsection{Calculation of Three-dimensional Constants} \label{app1}

We derive a formula for $\mathbb{F}$, which was a crucial quantity in section~\ref{sec:AA_3}, in the case of a single three-dimensional halide perovskite resonator. We have that
$$
\langle K^{(2)}_D[u_{\d}], u_{\d} \rangle = \frac{1}{8\pi}\mathbb{F}.
$$
From (\ref{Rose in Harlem 2}), we observe that
\begin{align*}
    % 8\pi - 8\pi\d^2\w^2\xi(\w,k)\l_{\d} &= - \d^4 k_0^2 \w^2 \xi(\w,k) \mathbb{F} \Leftrightarrow \\
    % \d^2 \w^2 \xi(\w,k)[8\pi \l_{\d}-\d^2 k_0^2 \mathbb{F}] &= 8 \pi \Leftrightarrow \\
    \d^2 \w^2 \xi(\w,k) &= \frac{8\pi}{8\pi \l_{\d}-\d^2 k_0^2 \mathbb{F}}.
\end{align*}
Also, we know that $\xi(\w,k)=\m_0(\ve(\w,k)-\ve_0)$, and we have shown that
$$
\ve(\w,k) = \ve_0 + \frac{1}{\m_0\d^2\w^2\left( \l_0 - \frac{i}{4\pi} \d k_0 \mathbb{B} \right)}.
$$
Substituting this into the above equation, we get
\begin{align*}
    % \d^2 \w^2 \m_0\Big(\ve(\w,k) - \ve_0\Big) &= \frac{8\pi}{8\pi \l_{\d}-\d^2 k_0^2 \mathbb{F}} \Leftrightarrow\\
    % \frac{\d^2 \w^2 \m_0}{\m_0\d^2\w^2\left( \l_0 - \frac{i}{4\pi} \d k_0 \mathbb{B} \right)} &= \frac{8\pi}{8\pi \l_{\d}-\d^2 k_0^2 \mathbb{F}} \Leftrightarrow\\
    % 8\pi \l_{\d}-\d^2 k_0^2 \mathbb{F} &= 8\pi \left( \l_0 - \frac{i}{4\pi} \d k_0 \mathbb{B} \right) \Leftrightarrow \\
    \mathbb{F} = \frac{8\pi}{\d^2 k_0^2} \Big( &\l_{\d} - \l_0 + \frac{i}{4\pi} \d k_0 \mathbb{B} \Big).
\end{align*}
Therefore, we obtain that
$$
\langle K^{(2)}_D[u_{\d}], u_{\d} \rangle = \frac{1}{\d^2 k_0^2} \Big( \l_{\d} - \l_0 + \frac{i}{4\pi} \d k_0 \mathbb{B} \Big).
$$

\subsection{Calculation of Two-dimensional Constants} \label{app2}

We derive a formula for $\mathbb{S}$, which was a crucial quantity in section~\ref{sec:AA_2}, in the case of a single two-dimensional halide perovskite resonator. We have that
$$
\langle K^{(2)}_D[u_{\d}], u_{\d} \rangle = \frac{i}{4\pi}\mathbb{S}.
$$
From (\ref{Headshot 2}), we can obtain an expression for $\mathbb{S}$. Indeed, (\ref{Headshot 2}) is equivalent to
\begin{align*}
    % 4\pi - &4\pi \d^2\w^2\xi(\w,k)\l_{\d} = -i\d^6 k_0^4 \log(\d k_0 \hat{\g}) \w^2 \xi(\w,k)\mathbb{S} \Leftrightarrow\\
    &4\pi = \d^2 \w^2 \xi(\w,k) \Big( 4\pi \l_{\d} - i \d^4 k_0^4 \log(\d k_0 \hat{\g}) \mathbb{S} \Big).
\end{align*}
We know that $\xi(\w,k) = \m_0\Big(\ve(\w,k) - \ve_0\Big)$ and we have shown that
$$
\ve(\w,k) = \frac{1}{\m_0 \d^2 \w^2 \left(  \log(\d k_0 \hat{\g}) \l_{-1} - \frac{\mathbb{P}}{2\pi} - \frac{i(\d k_0)^2 \log(\d k_0 \hat{\g}) \mathbb{G}}{4\pi} \right)} + \ve_{0}.
$$
Substituting this into the above equality, we get
\begin{align*}
    % 4\pi &= \d^2 \w^2 \m_0 \left( \frac{1}{\m_0 \d^2 \w^2 \left(  \log(\d k_0 \hat{\g}) \l_{-1} - \frac{\mathbb{P}}{2\pi} - \frac{i(\d k_0)^2 \log(\d k_0 \hat{\g}) \mathbb{G}}{4\pi} \right)} + \ve_{0} - \ve_0 \right) \Big( 4\pi \l_{\d} - i \d^4 k_0^4 \log(\d k_0 \hat{\g}) \mathbb{S} \Big) \Leftrightarrow \\
    % 4\pi &= \d^2 \w^2 \m_0 \frac{4\pi}{\m_0 \d^2 \w^2 \Big(4\pi \log(\d k_0 \hat{\g}) \l_{-1} - 2 \mathbb{P} - i (\d k_0)^2 \log(\d k_0 \hat{\g})\mathbb{G}\Big)}\Big( 4\pi \l_{\d} - i \d^4 k_0^4 \log(\d k_0 \hat{\g}) \mathbb{S} \Big) \Leftrightarrow \\
    % & \ \ \ \ \ \ \ \ \ \ \ \ 4\pi \log(\d k_0 \hat{\g}) \l_{-1} - 2 \mathbb{P} - i (\d k_0)^2 \log(\d k_0 \hat{\g})\mathbb{G} = 4\pi \l_{\d} - i \d^4 k_0^4 \log(\d k_0 \hat{\g}) \mathbb{S}  \Leftrightarrow\\
    % & \ \ \ \ \ \ \ \ \ \ \ \ i \d^4 k_0^4 \log(\d k_0 \hat{\g}) \mathbb{S} = 4\pi (\l_{\d} - \log(\d k_0 \hat{\g}) \l_{-1}) + 2 \mathbb{P} + i (\d k_0)^2 \log(\d k_0 \hat{\g})\mathbb{G} \Leftrightarrow \\
    & \ \ \ \ \ \ \ \ \ \ \ \ \mathbb{S} = \frac{-i}{\d^4 k_0^4 \log(\d k_0 \hat{\g})} \Big( 4\pi (\l_{\d} - \log(\d k_0 \hat{\g}) \l_{-1}) + 2 \mathbb{P} + i (\d k_0)^2 \log(\d k_0 \hat{\g})\mathbb{G} \Big).
\end{align*}
Therefore, we obtain that
$$
\langle K^{(2)}_D[u_{\d}], u_{\d} \rangle = \frac{1}{4\pi \d^4 k_0^4 \log(\d k_0 \hat{\g})} \Big( 4\pi (\l_{\d} - \log(\d k_0 \hat{\g}) \l_{-1}) + 2 \mathbb{P} + i (\d k_0)^2 \log(\d k_0 \hat{\g})\mathbb{G} \Big).
$$

\end{document}